\documentclass{agtart_a}
\pdfoutput=1

\usepackage{graphicx} 
\usepackage{pinlabel}


\title[Commensurability and separability of quasiconvex subgroups]
{Commensurability  and separability of quasiconvex subgroups}

\author[F Haglund]{Fr\'{e}d\'{e}ric Haglund}
\givenname{Fr\'{e}d\'{e}ric}
\surname{Haglund}
\address{Laboratoire de Math\'{e}matiques\\
Universit\'{e} de Paris XI (Paris-Sud)\\\newline
91405 Orsay\\
France}
\email{frederic.haglund@math.u-psud.fr}
\urladdr{}

\volumenumber{6}
\issuenumber{}
\publicationyear{2006}
\papernumber{36}
\startpage{949}
\endpage{1024}

\doi{}
\MR{}
\Zbl{}

\keyword{graph products}
\keyword{Coxeter groups}
\keyword{commensurability}
\keyword{separability}
\keyword{quasiconvex subgroups}
\keyword{right-angled buildings}
\keyword{Davis' complexes}
\keyword{finite extensions}
\subject{primary}{msc2000}{20F55}
\subject{primary}{msc2000}{20F67}
\subject{primary}{msc2000}{20F65}
\subject{secondary}{msc2000}{20E26}
\subject{secondary}{msc2000}{51E24}
\subject{secondary}{msc2000}{20E22}
\subject{secondary}{msc2000}{20J06}

\received{25 July 2005}
\revised{}
\accepted{23 November 2005}
\published{9 August 2006}
\publishedonline{9 August 2006}
\proposed{}
\seconded{}
\corresponding{}
\editor{}
\version{}

\arxivreference{}  




\AtBeginDocument{\let\bar\wbar\let\tilde\wtilde\let\hat\what}
\def\<{\langle}\def\>{\rangle}
\makeop{CAT}
\makeop{Is}
\def\fbar{{}\mskip4mu\overline{\mskip-4mu f\mskip-1mu}\mskip1mu}
\def\fibar{{}\mskip2mu\overline{\mskip-2mu f^{-{\smash[t] 1}}\mskip-1mu}\mskip1mu}
\def\fpbar{{}\mskip2mu\overline{\mskip-2mu f'\mskip-1mu}\mskip1mu}
\def\iperp{i^{{{}\smash[t]{\perpeq}}}}
\def\ovae{{}\mskip5mu\overrightarrow{\mskip-5mu e}}
\def\part#1{\section*{Part #1}}

\def\tsty{\textstyle}


\makeatletter
\def\cnewtheorem#1[#2]#3{\newtheorem{#1}{#3}[section]
\expandafter\let\csname c@#1\endcsname\c@thm}
\makeatother

\newtheorem{thm}{Theorem}[section]
\cnewtheorem{lem}[thm]{Lemma}
\cnewtheorem{cor}[thm]{Corollary}
\cnewtheorem{prop}[thm]{Proposition}

\theoremstyle{definition}
\cnewtheorem{defn}[thm]{Definition}
\cnewtheorem{rem}[thm]{Remark}
\cnewtheorem{exmp}[thm]{Example}
\cnewtheorem{prob}[thm]{Problem}

\newcommand{\field}[1]{\mathbb{#1}}
\newcommand{\integers}{\ensuremath{\field{Z}}}
\newcommand{\Euclidean}{\ensuremath{\field{E}}}
\newcommand{\size}[1]{\ensuremath{\vert #1 \vert}}

\newcommand{\perpeq}{^{\underline{\perp}}}


\begin{document}

\begin{asciiabstract}
We show that two uniform lattices of a regular right-angled Fuchsian
building  are commensurable, provided the chamber is a polygon with at
least six edges.  We show that in an arbitrary Gromov-hyperbolic regular
right-angled building associated to a graph  product of finite groups,
a uniform lattice is commensurable with the graph  product provided
all of its quasiconvex subgroups are separable. We obtain a similar
result for uniform lattices of the Davis complex of Gromov-hyperbolic
two-dimensional Coxeter groups. We also prove that every extension of a
uniform lattice of a CAT(0) square complex by a finite group is virtually
trivial, provided each quasiconvex subgroup of the lattice is separable.
\end{asciiabstract}

\begin{htmlabstract}
We show that two uniform lattices of a regular right-angled Fuchsian
building  are commensurable, provided the chamber is a polygon with at
least six edges.  We show that in an arbitrary Gromov-hyperbolic regular
right-angled building associated to a graph  product of finite groups,
a uniform lattice is commensurable with the graph  product provided
all of its quasiconvex subgroups are separable. We obtain a similar
result for uniform lattices of the Davis complex of Gromov-hyperbolic
two-dimensional Coxeter groups. We also prove that every extension of a
uniform lattice of a CAT(0) square complex by a finite group is virtually
trivial, provided each quasiconvex subgroup of the lattice is separable.
\end{htmlabstract}

\begin{abstract}
We show that two uniform lattices of a regular right-angled Fuchsian
building  are commensurable, provided the chamber is a polygon with at
least six edges.  We show that in an arbitrary Gromov-hyperbolic regular
right-angled building associated to a graph  product of finite groups,
a uniform lattice is commensurable with the graph  product provided
all of its quasiconvex subgroups are separable. We obtain a similar
result for uniform lattices of the Davis complex of Gromov-hyperbolic
two-dimensional Coxeter groups. We also prove that every extension of a
uniform lattice of a $\CAT(0)$ square complex by a finite group
is virtually trivial, provided each quasiconvex subgroup of the lattice
is separable.
\end{abstract}

\maketitle

\section{Introduction}
\label{intro}

 We consider a special instance of the
following (very general) question:

\noindent If $X$ and $Y$ are compact metric spaces with isometric
universal covers $\tilde X$ and $\tilde Y$, when do $X$ and $Y$ have
isometric compact covers?

Using the fundamental group leads to the following question:

When are two discrete cocompact subgroups of $\Is(\tilde X)$ commensurable
in $\Is(\tilde X)$?

Recall that two subgroups $\Gamma,\Gamma'$ of
a given group $G$ are said to be \textit{commensurable\/}
in $G$ when there is $g\in G$ such that
$g\Gamma g^{-1}\cap \Gamma'$ is of finite index in both $\Gamma'$ and
$g\Gamma g^{-1} $ (taking $G={\mathbb R}$, $\Gamma=\alpha{\mathbb Z}$ and 
$\Gamma'=\alpha'{\mathbb Z}$ explains the origin of the word
commensurable). Two abstract groups $\Gamma,\Gamma'$ are said to be 
\textit{abstractly commensurable\/} if they have finite index isomorphic
subgroups.

 There is a  lot of literature on this
subject when $X$ is a locally symmetric space
(of noncompact type). In this case $X$ is a
Riemannian manifold of nonpositive sectional
curvature and $G=\Is(\tilde X)$ is a Lie
group, the set of real points of some
algebraic group. It is always possible to
construct particular lattices in $G$ -- called
arithmetic because they arise from number
theory (see Borel \cite{Borel63}). When the real rank is at least
two, Margulis showed that all
(irreducible) lattices are necessarily
arithmetic \cite{Margulis84}. Yet all arithmetic
lattices need not be commensurable. When
the real rank is one, then $X$ is negatively curved. Again
there are  noncommensurable  lattices,
because there are nonarithmetic ones by Gromov and Piatetski-Shapiro \cite{GromovPShapiro}.

 We are interested in the case when $\tilde X$ is not a manifold
anymore, but some particular Tits building  or 
Davis
complex, both of which admit a simplicial subdivision. The automorphism groups of these locally compact simplicial complexes are locally compact, totally disconnected, and nondiscrete in many of the cases we are interested in. For us, a
\textit{lattice\/} of a locally compact simplicial complex
$X$ is a subgroup $\Gamma\subset {\rm Aut}(X)$ such that each vertex $v$
has a  finite stabilizer $\Gamma_v$ and  
$\sum \punfrac{1}{\size{\Gamma_v}}<\infty$, where the sum has one term for
each orbit of the $\Gamma$--action on the set of vertices. A lattice $\Gamma\subset
{\rm Aut}(X)$ is said to be \textit{uniform\/} whenever it is cocompact, that is, 
there are finitely many orbits of vertices.

 We first
explain our result concerning buildings.
\subsection{Right-angled buildings}

The  simplest examples of (nonspherical) 
buildings are trees. Here any two uniform
lattices of a  locally finite tree are
commensurable (see Bass and Kulkarni \cite{BassKulkarni} and Leighton \cite{Leighton82}).

In dimension 2,  a Euclidean building $\Delta$ is obtained
by taking the product of a
$q$--regular tree $T_q$ with itself.   Burger and
Mozes 
have produced uniform lattices of $\Delta$
which are simple groups \cite{BurgerMozes2000}. Their construction
begins with an irreducible uniform lattice
$\Gamma$  in $\Delta$ (of Hilbert modular
group nature); by definition, $\Gamma$  is
not commensurable with the product of two
uniform lattices of the factors. In 
the same year, Wise constructed non
residually finite uniform lattices in
$\Delta$, that cannot either be  
commensurable with a product lattice \cite{Wise96Thesis}.

Let us describe more precisely 
$\Delta=T_q\times T_q$. A face of
$\Delta$ is a square, the product of two edges.  An
edge of $\Delta$ incident to a vertex $v$ is one
of two types corresponding to an edge of the
first factor or of the second, and there are $q$ of each type. Furthermore
any two edges of distinct types coming out of a
vertex are contained in one and exactly one
square. In other words the link of $\Delta$ 
at each vertex is the complete bipartite 
graph on
$q+q$ vertices. Note that
$\Delta$ is full of copies of the Euclidean
plane tesselated by squares. Furthermore
$\Delta$ has a nice reducible uniform lattice
$\Gamma_{4, q}$ which we describe now.

The free product of two cyclic groups of order
$q$ acts on its Bass--Serre tree $T_q$: the
action is simply transitive on edges (without
inversion). So this free product $L_q$ is a
uniform lattice of $T_q$. We let $\Gamma_{4,q}$
denote the direct product $L_q\times L_q$.

The spaces  $\Delta$
we want to look
at are the hyperbolic analogues of 
$T_q\times T_q$, in which squares are replaced
by hyperbolic regular right-angled polygons. M
Bourdon was the first to consider these
right-angled Fuchsian buildings in order to
study their geometry \`a la Gromov. For each
integer $q\ge 2$ and each integer $p\ge 4$,
there is a connected, simply connected
CW--complex $I_{p,q}$ such that
\begin{itemize}
\item all attaching maps are injective
\item the boundary of a 2--cell is  cycle of
length $p$
\item the link of each vertex is a  complete
bipartite graph on $q+q$ vertices.
\end{itemize}
\noindent  Furthermore such a CW--complex is
unique up to cellular homeomorphism \cite[Proposition 2.2.1]{Bourdon97}. Bourdon also introduces a group
$\Gamma_{p,q}$ acting on $I_{p,q}$ as a 
uniform lattice, generalizing the case $p=4$
above. For $p\ge 5$ the group
$\Gamma_{p,q}$ is Gromov-hyperbolic, as
$I_{p,q}$ can be equipped with a $\CAT(-1)$
metric (see Meier \cite{Meier96}).

 Our main result answers a
question raised by Bourdon \cite[Section 1.E.2]{Bourdon00}.  It will follow
from a general statement on regular right-angled buildings.

\begin{thm}\label{thm:commBourdon}
For
$p\ge 6$ and
$q\ge 2$, all uniform lattices of $I_{p,q}$ 
are commensurable in ${\rm Aut}(I_{p,q})$.
\end{thm}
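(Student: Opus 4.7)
The plan is to deduce Theorem~\ref{thm:commBourdon} from the general commensurability statement announced in the abstract: any uniform lattice $\Gamma$ of a Gromov-hyperbolic regular right-angled building is commensurable in the full automorphism group with the underlying graph product of finite groups, provided every quasiconvex subgroup of $\Gamma$ is separable. Since commensurability is transitive in ${\rm Aut}(I_{p,q})$, it will then suffice to verify the hypotheses of that general statement for each uniform lattice of $I_{p,q}$.

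First I would identify $I_{p,q}$ with the Davis realization of the thickness-$q$ right-angled building whose Coxeter group is the reflection group of a right-angled hyperbolic $p$-gon, so that the associated graph product $W_{p,q}$ is the graph product of $p$ copies of $\mathbb{Z}/q\mathbb{Z}$ over the $p$-cycle commuting graph. This $W_{p,q}$ is itself a uniform lattice of $I_{p,q}$ and will play the role of reference lattice. Since $p\ge 5$ forces a $\CAT(-1)$ structure on $I_{p,q}$ (Meier), every uniform lattice $\Gamma$ of $I_{p,q}$ is word-hyperbolic, and the notion of quasiconvex subgroup is well-behaved.

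The core task, and the step I expect to be hardest, is to prove that every quasiconvex subgroup of an arbitrary uniform lattice $\Gamma\subset {\rm Aut}(I_{p,q})$ is separable. My approach is to exploit the invariant wall system of $I_{p,q}$ coming from the Bourdon--Pajot tree-walls: dualising this wall structure produces a $\CAT(0)$ cube complex $\tilde Y$ on which $\Gamma$ acts properly and cocompactly. Separability of a quasiconvex subgroup $H$ then reduces to separating, for each $g\notin H$, the convex subcomplex associated to $H$ from $g$ in some finite cover of $\tilde Y/\Gamma$. Such covers are built by iteratively colouring hyperplane-classes and passing to a ``canonical completion'' adapted to the building setting; the invariance and cofiniteness of the tree-wall system do the bulk of the combinatorial work.

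The role of the hypothesis $p\ge 6$ is entirely combinatorial: two walls meeting inside a common chamber of $I_{p,q}$ correspond to two non-consecutive edges of a regular $p$-gon, and only for $p\ge 6$ are there enough edges between any pair of such walls to rule out the pathological hyperplane configurations (self-crossing, self-osculation, inter-osculation, one-sidedness) that would otherwise block the construction of separating covers. This is consistent with the $p=4$ obstruction, where Wise produced non-residually-finite uniform lattices for which quasiconvex separability must fail. Once separability of quasiconvex subgroups has been established for $\Gamma$ (and directly for the reference lattice $W_{p,q}$), the general commensurability theorem makes both $\Gamma$ and $W_{p,q}$ commensurable in ${\rm Aut}(I_{p,q})$ with the underlying graph product, and hence commensurable with each other.
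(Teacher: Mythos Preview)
Your high-level reduction is exactly the paper's: invoke Theorem~\ref{thm:commsepar} for the graph product $\Gamma_{p,q}$ of $p$ copies of $\mathbb{Z}/q\mathbb{Z}$ along the $p$-cycle, observe that $I_{p,q}$ is Gromov-hyperbolic for $p\ge 5$, and then verify that every uniform lattice of $I_{p,q}$ has separable quasiconvex subgroups. Commensurability of all lattices with the fixed graph product then gives commensurability among themselves.

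The divergence is in how you handle the separability input. The paper does \emph{not} prove it: it simply quotes Wise's theorem \cite[Theorem~10.1]{WisePolygons}, which asserts precisely that for $p\ge 6$ and $q\ge 2$ every quasiconvex subgroup of any uniform lattice of $I_{p,q}$ is separable. That citation is the entire content of the step you call ``the core task''. Your proposed argument for this step is not a proof but a plan, and parts of the plan are off: the dual cube complex you describe is essentially the square subdivision of $I_{p,q}$ itself, so the issue is not cubulating $\Gamma$ (it is already cubulated) but showing that the quotient square complex is virtually special in Wise's sense; your account of why $p\ge 6$ matters (``enough edges between any pair of such walls'') does not match the actual mechanism in \cite{WisePolygons}, where $p\ge 6$ is what makes the square subdivision a $\mathcal{VH}$--complex satisfying Wise's small-cancellation hypotheses. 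Reproving Wise's result is a paper-length undertaking and is not expected here.

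One further point the paper addresses that you omit: Theorem~\ref{thm:commsepar} yields commensurability inside the group ${\rm Aut}_0(\Delta)$ of \emph{type-preserving} automorphisms, whereas Theorem~\ref{thm:commBourdon} is stated for the full ${\rm Aut}(I_{p,q})$. The paper closes this gap by noting that for $I_{p,q}$ the type-preserving subgroup has finite index in the full automorphism group, so the conclusion transfers. You should include this observation.
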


\begin{cor}\label{cor:linear}  For
$p\ge 6$ and $q\ge 2$, all uniform lattices of
$I_{p,q}$ are $\mathbb R$--linear.
\end{cor}

\begin{proof}[Proof of
\fullref{cor:linear}] A theorem of Hsu
and Wise shows that all
$\Gamma_{p,q}$ are linear \cite{HsuWiseGraphProducts}.
Every group abstractly commensurable to a
linear group is itself linear (see
Wehrfritz \cite{Wehrfritz73} for a proof).
\end{proof}

 Note that  B R\'emy recently exhibited
nonlinear (nonuniform) lattices in 
${\rm Aut} (I_{p,q})$ \cite{Remy04}.

The groups $\Gamma_{p,q}$ are special  kinds
of  \textit{graph products\/} of finite groups 
$(G_v)_{v\in V({\mathcal G})}$ above a finite
graph ${\mathcal G}$. To each such graph
product
$\Gamma=\Gamma({\mathcal G},(G_v)_{v\in
V({\mathcal G})})$ we associate a proper
$\CAT(0)$ cubical complex
$\Delta=\Delta({\mathcal G},(G_v)_{v\in
V({\mathcal G})})$ such that
$\Delta$ is a  right-angled building and
$\Gamma$ acts naturally on
$\Delta$ as a uniform lattice  (see \fullref{sec:building}
 for precise
definitions). When
$\mathcal G$ is a cyclic graph with $p$
vertices and all finite groups $G_v$ equal  
to
$\unfrac{\mathbb Z}{q\mathbb Z}$, then $\Gamma=
\Gamma_{p,q}$ and $\Delta=I_{p,q}$ (more
precisely $\Delta$ is  the first square
subdivision of the polygonal complex 
$I_{p,q}$).

 We will prove the following criterion:

\begin{thm}\label{thm:commsepar} Suppose
$\Delta({\mathcal G},(G_v)_{v\in V({\mathcal
G})})$ is Gromov-hyperbolic.  Then
a uniform lattice $\Gamma'$ is commensurable
to $\Gamma$ in ${\rm Aut} (\Delta)$ if  all of its
quasiconvex subgroups are separable.
\end{thm}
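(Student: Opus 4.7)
The plan is to produce, after a conjugation in $\mathrm{Aut}(\Delta)$, a common finite-index subgroup of $\Gamma$ and $\Gamma'$. The standard lattice $\Gamma$ is characterized by two combinatorial features of its action on $\Delta$: it preserves the canonical coloring of the walls (panels) of $\Delta$ by the vertex set $V(\mathcal{G})$, and its action is as ``free'' as possible on chambers. My goal is to transfer both features to a finite-index subgroup of $\Gamma'$, using the separability hypothesis with increasing refinement.

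First, since $V(\mathcal{G})$ is finite, the permutation action of $\Gamma'$ on wall-types gives a homomorphism $\Gamma' \to \mathrm{Sym}(V(\mathcal{G}))$ whose kernel $\Gamma'_1$ is a finite-index, type-preserving subgroup. Next, separability applied to the trivial subgroup yields residual finiteness of $\Gamma'$, and because chambers fall into finitely many $\Gamma'_1$-orbits with finite stabilizers, the familiar trick of intersecting finite-index subgroups that avoid each nontrivial orbit-stabilizer element, followed by passage to the normal core, produces a further finite-index subgroup $\Gamma'_2 \subset \Gamma'_1$ that acts freely on the set of chambers. So far only residual finiteness has been used.

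The heart of the argument is to show that $\Gamma'_2$, after a suitable conjugation in $\mathrm{Aut}(\Delta)$, lies with finite index in $\Gamma$. This is where quasiconvex separability is needed in full strength. The wall stabilizers and, more generally, the residue stabilizers of $\Gamma'_2$ are quasiconvex subgroups of $\Gamma'_2$ (walls and residues being convex subcomplexes of the cubical building $\Delta$, and $\Gamma'_2$ being Gromov-hyperbolic by assumption). Applying separability to each of the finitely many $\Gamma'_2$-conjugacy classes of such stabilizers, I would extract a further finite-index subgroup $\Gamma'_3 \subset \Gamma'_2$ whose action on each wall matches the action prescribed by the graph-product factor $G_v$ associated to that wall's type. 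Once the local combinatorial structure matches, an equivariant cubical map should identify $\Gamma'_3$ with a finite-index subgroup of a suitable conjugate of $\Gamma$, giving commensurability in $\mathrm{Aut}(\Delta)$.

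The main obstacle is precisely this last step: converting the abstract hypothesis that \emph{all} quasiconvex subgroups of $\Gamma'$ are separable into the concrete conclusion that a finite-index subgroup of $\Gamma'$ embeds as a finite-index subgroup of $\Gamma$. The natural tool is the Haglund--Wise theory of special cube complexes and their canonical completions: in the hyperbolic cubulated setting, quasiconvex separability should force $\Gamma'$ to be virtually special, and the canonical completion of the quotient cube complex $\Delta/\Gamma'_3$ can then be used to lift the graph-product coloring and realize the desired embedding into $\Delta/\Gamma$. Rendering this functorial identification precise (checking that the local completions glue to a global equivariant map) will be the real technical content of the proof.
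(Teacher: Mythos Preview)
Your opening reductions (passing to a type-preserving subgroup, observing that residue stabilizers are quasiconvex because residues are convex) are fine and match the paper. The gap is in the last two paragraphs: you have no actual mechanism for turning separability of residue stabilizers into the statement that a finite-index subgroup of $\Gamma'$ sits inside a conjugate of $\Gamma$. Invoking canonical completion and virtual specialness does not help here. Virtual specialness (even if you could deduce it) gives an embedding into \emph{some} right-angled Artin or Coxeter group, not commensurability with the \emph{particular} graph product $\Gamma$ acting on the \emph{particular} building $\Delta$; and canonical completion produces covers of Salvetti-type complexes, not a map from $\Delta/\Gamma'_3$ to $\Delta/\Gamma$. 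The phrase ``once the local combinatorial structure matches'' is doing all the work and is not justified.

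What the paper actually does is introduce a precise obstruction, the \emph{holonomy}, and then kill it. For each $i\in I$ and each $i^{\perpeq}$-residue $R$, the stabilizer $\Gamma'_R$ acts on the finite set $\pi_0(\partial_i R)$ (of cardinality $|G_i|$); the kernel $K_R$ of this action is a finite-index subgroup of $\Gamma'_R$, hence quasiconvex in $\Gamma'$. Separability of $K_R$ yields a finite-index subgroup of $\Gamma'$ meeting $\Gamma'_R$ exactly in $K_R$, i.e.\ with trivial $i$-holonomy at $R$. Passing to the normal core and intersecting over the finitely many orbits of such residues produces a finite-index subgroup $\Gamma''\subset\Gamma'$ with \emph{no} holonomy. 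A group with no holonomy preserves an \emph{atlas}: a coherent choice, for every $(i,R)$, of a simply transitive $G_i$-action on $\pi_0(\partial_i R)$. The paper then proves (by a germ-extension argument along galleries, using simple connectedness of $\Delta$) that any two atlases are conjugate in $\mathrm{Aut}_0(\Delta)$ and that the stabilizer of an atlas acts simply transitively on chambers; since $\Gamma$ itself preserves a standard atlas, $\Gamma''$ lies with finite index in a conjugate of $\Gamma$. The holonomy/atlas formalism is exactly the missing ``local-to-global'' device that your sketch gestures at but does not supply.
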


Here ${\rm Aut} (\Delta)$ is the group of
type-preserving automorphisms (see \fullref{sec:building}).
The hyperbolicity condition is equivalent to
the fact that the graph
$\mathcal G$ does not contain a square as a 
full subgraph (see Gromov \cite{Gromov87}).

Recall that a subgroup $\Lambda$ of a
Gromov-hyperbolic group is \textit{quasiconvex\/}
if a
$\Gamma$--geodesic between two elements of
$\Lambda$ stays at uniformly bounded  distance
of $\Lambda$.  Recall also that a subgroup $\Lambda$ of a  group
$\Gamma$ is \textit{separable\/} if it is the
intersection of all finite index subgroups
containing it. The residual finiteness of
$\Gamma$ is equivalent to the separability of
the trivial subgroup.

The property that all quasiconvex subgroups
be separable 
has been studied for Fuchsian and Kleinian
groups by Scott \cite{Scott78}, Gitik \cite{GitikDoubles}, and Agol, Long and Reid \cite{AgolLongReid2001}.
In \cite{WisePolygons} and \cite{WiseFigure8},
Wise was the first
to explore this property in the framework of
geometric group theory. He also
showed the importance of this property with
respect to the problem of residual finiteness
of Gromov-hyperbolic groups (see Kapovich and Wise \cite{KapovichWise}).

We proved in \cite{Haglundgraphpdct} the converse of
\fullref{thm:commsepar} by establishing
the separability of all
quasiconvex subgroups of graph products of
finite groups.
Hence in the world of
Gromov-hyperbolic right-angled buildings
the  commensurability class of the
``natural'' uniform lattice is characterized
by the separability of quasiconvex subgroups.

\fullref{thm:commBourdon}
is a consequence of
\fullref{thm:commsepar} since 
$I_{p,q}$ is hyperbolic for
$p\ge 5$. A remarkable theorem of Wise \cite[Theorem 10.1]{WisePolygons} insures that  for
$p\ge 6$ and
$q\ge 2$, all quasiconvex subgroups of any uniform
lattice of $I_{p,q}$ are  separable. Note that 
the group of type-preserving automorphisms
of $I_{p,q}$ is of finite index in the full
automorphism group. So
\fullref{thm:commBourdon} is valid even if
we understand ${\rm Aut}(I_{p,q})$ as the full
automorphism group. Observe that in general the
group of type-preserving automorphisms of a
right-angled building is \textit{not\/} of finite
index in the full automorphism group.

In the case $p=5$ the criterion of
\fullref{thm:commsepar} remains valid, 
but Wise's separability result is not
available. So the question remains open.

To establish
\fullref{thm:commsepar} we first introduce
a geometric invariant of the action of the
uniform lattice $\Gamma'$ called holonomy
which is trivial for a naturally defined finite
index subgroup of  $\Gamma'=\Gamma$.  In \fullref{sec:atlas}, 
we prove the following:

\begin{thm}\label{thm:commhol}
A uniform lattice
$\Gamma'$ is commensurable to
$\Gamma$ in ${\rm Aut} (\Delta)$ if and only  if
$\Gamma'$ has a finite index subgroup with
trivial holonomy.
\end{thm}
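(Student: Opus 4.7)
The plan is to work with the forward and reverse directions separately, with the reverse direction (trivial holonomy implies commensurability) carrying essentially all of the content. I first expect that holonomy will be a function (most likely a conjugacy class of homomorphisms, up to base change) attached to the action of a uniform lattice $\Gamma'$ on $\Delta$, measuring the failure of edges of the quotient $\Delta/\Gamma'$ to admit a coherent labeling by the color set $V(\mathcal{G})$ that the natural $\Gamma$-quotient $\Delta/\Gamma$ enjoys. With this setup, the graph product $\Gamma$ itself has trivial holonomy essentially by construction, and holonomy should be manifestly invariant under $\mathrm{Aut}(\Delta)$-conjugation and well-behaved under restriction to finite index subgroups.

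For the easy direction ($\Rightarrow$), I would simply note that if $\Gamma'$ and $\Gamma$ are commensurable in $\mathrm{Aut}(\Delta)$, there is $g\in \mathrm{Aut}(\Delta)$ with $g\Gamma g^{-1}\cap\Gamma'$ of finite index in $\Gamma'$; this intersection is contained in the type-preserving lattice $g\Gamma g^{-1}$, which has trivial holonomy (since $\Gamma$ does and the notion is conjugation-invariant), so the holonomy of the finite index subgroup $g\Gamma g^{-1}\cap\Gamma'$ of $\Gamma'$ is also trivial. So the first step is just to verify these naturality properties of the invariant in \fullref{sec:atlas} once it is defined.

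The reverse direction ($\Leftarrow$) is the main obstacle and proceeds by promoting an abstract chart datum on the quotient to an honest $\mathrm{Aut}(\Delta)$-conjugation. Assume a finite index subgroup $\Gamma''\leq\Gamma'$ has trivial holonomy. I would first use triviality of holonomy to coherently label every edge of $\Delta/\Gamma''$ by an element of $V(\mathcal{G})$ in a way compatible with the local building structure (this is the geometric content of ``trivial holonomy''). Lifting to $\Delta$ via the covering $\Delta\to\Delta/\Gamma''$ produces a $\Gamma''$-equivariant typing of $\Delta$ by $V(\mathcal{G})$ whose local pattern on each vertex-link matches the standard typing coming from $\Gamma$.

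The final step is the rigidity input: two $V(\mathcal{G})$-typings of the right-angled building $\Delta$ with the same local structure differ by an element $g\in\mathrm{Aut}(\Delta)$, because the standard typing is determined up to the type-preserving automorphism group. Conjugating $\Gamma''$ by this $g$ puts its action in the form of a type-preserving action that, together with the fact that the stabilizers of chambers for both $\Gamma$ and $g\Gamma'' g^{-1}$ are forced by the labeling to be compatible, identifies $g\Gamma'' g^{-1}$ as a subgroup of $\Gamma$ (or at worst as a subgroup commensurable with $\Gamma$, using that both are uniform lattices acting on the same finite collection of chamber orbits). Since $\Gamma''$ has finite index in $\Gamma'$, this yields the desired commensurability of $\Gamma'$ with $\Gamma$. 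The delicate point I expect to have to handle carefully is precisely this last rigidity step: translating the purely combinatorial statement that the typings agree up to $\mathrm{Aut}(\Delta)$ into the group-theoretic statement that $g\Gamma''g^{-1}\subseteq\Gamma$ (rather than merely preserving the same type partition), which will rely on the precise definition of $\mathrm{Aut}(\Delta)$ as the type-preserving group together with the standard fact that stabilizers of chambers in the graph product action exhaust the finite subgroups compatible with the local coloring.
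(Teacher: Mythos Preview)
Your overall architecture for the reverse direction is exactly what the paper does: build a $\Gamma''$--invariant combinatorial structure on $\Delta$, prove that any two such structures are conjugate by an element of ${\rm Aut}_0(\Delta)$, and deduce that $\Gamma''$ sits with finite index inside a conjugate of $\Gamma$. But you have misidentified the structure, and with it the meaning of holonomy.

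The typing $t\co\Delta^0\to\overline{N(\mathcal G)}$ is \emph{already} defined on all of $\Delta$ and is preserved by every element of ${\rm Aut}_0(\Delta)$, in particular by $\Gamma''$ (see \fullref{defn:typ}). So there is nothing to reconstruct there, and ``labeling edges of $\Delta/\Gamma''$ by elements of $V(\mathcal G)$'' is not the content of trivial holonomy. The $i$--holonomy of $\Gamma''$ at an $i^{\underline\perp}$--residue $R$ is the representation $\Gamma''_R\to{\mathfrak S}(\pi_0(\partial_iR))$ on the \emph{finite set} of $q_i$ connected components of the $i$--boundary (\fullref{defn:holbuilding}); it measures whether the stabilizer permutes the $q_i$ chambers in each $\{i\}$--residue, not whether it scrambles types. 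Correspondingly, the invariant structure the paper builds is an \emph{atlas} $\mathcal A$ (\fullref{defn:atlas}): for every $i$ and every $i^{\underline\perp}$--residue $R$, a simply transitive $G_i$--action on $\pi_0(\partial_iR)$. Trivial holonomy is exactly what allows one to propagate a choice of such actions $\Gamma''$--equivariantly (\fullref{prop:noholatlas}); the rigidity step is \fullref{prop:conjugatlas}, which shows any two atlases are ${\rm Aut}_0(\Delta)$--conjugate, whence ${\rm Aut}_0(\Delta,\mathcal A)$ is conjugate to $\Gamma$ and $\Gamma''$ has finite index in it (\fullref{thm:noholcomm}).

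A smaller issue in your forward direction: $\Gamma$ itself does \emph{not} have trivial holonomy (the stabilizer $\Gamma_{i^{\underline\perp}}$ of $R(i^{\underline\perp},C_*)$ surjects onto $G_i$ in its action on $\pi_0(\partial_iR)$). The paper passes to the finite index kernel $\Gamma_0=\ker(\Gamma\to\prod_iG_i)$, which does have trivial holonomy (\fullref{exmp:gamma0}); one then transports $f(\Gamma_0\cap\Gamma_1)f^{-1}$ into $\Gamma'$. Your argument is easily repaired by inserting this intersection.
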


In \fullref{sec:killhol}, we conclude the proof of
\fullref{thm:commsepar}   by
progressively killing all the holonomy in a
convenient finite index subgroup of $\Gamma'$. This is where the separability
is used.

Note that \fullref{thm:commhol} can be used
to recover a previous result by Januszkiewicz
and Swiatkowski \cite{JanuszkiewiczSwiatkowskiGraphProducts} (see
\fullref{sec:atlas}).

\begin{cor}\label{cor:commJS}  The graph products of
$\Gamma({\mathcal G},(G_v)_{v\in V({G})})$
and 
$\Gamma({\mathcal G},(H_v)_{v\in V({G})})$ 
are commensurable if for each vertex $v$, the
two finite groups
${ G_v}$ and ${
H_v}$ have the same cardinality.
\end{cor}

Locally finite biregular trees are  the simplest infinite regular right-angled
buildings we can consider. But in the case of
a tree the holonomy is automatically trivial
for any  uniform lattice. Hence in \fullref{sec:atlas} we also
recover ``Leighton's lemma''
\cite{AngluinGardiner,Leighton82,BassKulkarni}.

\begin{cor}\label{cor:commL}
 Two
uniform lattices of a locally finite biregular
tree $T$ are commensurable in ${\rm Aut} (T)$.
\end{cor}

\subsection{Davis complexes} We now turn to
our second result linking commensurability and
separability of quasiconvex subgroups. For some
Fuchsian buildings which are not right-angled
we can still prove the
commensurability of all uniform lattices.
By Davis \cite{Davis83} 
and Haglund \cite{HaglundReseaux}, such a Fuchsian building is Davis' geometric realization of
a Coxeter system whose finite nerve $L$ is a
generalized $\mu$--gon with $\mu\ge 3$, and all
finite entries of the Coxeter matrix are 
equal to some integer $m\ge 2$.  
In fact our result is obtained in the framework
of the Davis complexes, the basic properties
of which we recall now.

Let $(m,L)$ be a pair where 
$m$ is any integer and $L$ is any graph
such that $m\ge 3$ or $m=2$ and the
girth of $L$ is $\ge 4$.  The Davis complex associated to $(m,L)$
 is a $\CAT(0)$
polygonal complex
$X(m,L)$ in which each polygon has $2m$ edges
and  the link of each vertex is isomorphic to
$L$. The automorphism group of
$X(m,L)$ contains a subgroup $W(m,L)$ generated
by reflections of $X(m,L)$ which acts
simply transitively on vertices. In this context
we define a reflection to be an order--2
automorphism exchanging the endpoints of some
edge $e$ of
$X(m,L)$, and fixing pointwise the totally
geodesic tree consisting of points equidistant
to the endpoints of $e$. Note that the  (locally
compact) automorphism group of the polygonal
complex
$X(m,L)$ is \textit{not discrete\/}
if and only if $L$ has nontrivial
automorphisms fixing pointwise the star of a
vertex \cite{HaglundReseaux,HaglundPaulin98}.

When $L$ is a generalized $\mu$--gon,
$X(m,L)$ is Davis' geometric realization of a
building whose apartments are tesselations of
$\mathbb H^2$ by regular $2m$--gons with vertex
angle $\unfrac{\pi}{\mu}$.  (In the terminology of
\cite{HaglundExUniHom} these buildings are \textit{locally reflexive without holonomy\/}). For
$\mu=2$ and $L$ finite, we recover regular
locally finite right-angled buildings for which \fullref{thm:commsepar} implies uniform lattices are
commensurable.

In \fullref{sec:killholcox} we
will prove the following generalization.

\begin{thm}\label{thm:commseparbis} Assume
$L$ is a finite graph and 
$X(m,L)$ is negatively curved (that is $m\ge
4$, or $m\ge 3$ and the girth of $L$ is at
least $4$, or $m\ge 2$ and the girth of $L$ is
at least $5$).

Then a uniform lattice
$\Gamma$ in  $X(m,L)$ is commensurable to the
Coxeter group $W(m,L)$ in ${\rm Aut}(X(m,L))$
if all of its quasiconvex subgroup are
separable.
\end{thm}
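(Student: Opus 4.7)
My plan is to imitate the strategy used for Theorem \ref{thm:commsepar} in the right-angled case, transporting the holonomy machinery from graph products to the Coxeter setting. The first step is to introduce an atlas-type invariant adapted to the Davis complex $X(m,L)$. Because $W(m,L)$ acts simply transitively on vertices by reflections, each edge $e$ of $X(m,L)$ carries a canonical reflection $r_e \in W(m,L)$, and the local combinatorial data at each vertex are canonically labelled by the vertices of the link $L$. A uniform lattice $\Gamma \subset \mathrm{Aut}(X(m,L))$ need not preserve these labels; measuring how badly the $\Gamma$--action twists the canonical labelling along closed paths in $\Gamma \backslash X(m,L)$ defines the \emph{holonomy} of $\Gamma$. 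By construction the Coxeter group $W(m,L)$ itself has trivial holonomy, and any subgroup of $\mathrm{Aut}(X(m,L))$ commensurable to $W(m,L)$ has a finite index subgroup with trivial holonomy.

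The second step is to prove a Coxeter analog of Theorem \ref{thm:commhol}: a uniform lattice $\Gamma \subset \mathrm{Aut}(X(m,L))$ is commensurable to $W(m,L)$ if and only if some finite index subgroup $\Gamma_0 \le \Gamma$ has trivial holonomy. The ``only if'' direction is immediate from the previous paragraph. For the ``if'' direction, triviality of the holonomy should allow us to rigidify the $\Gamma_0$--action so that $\Gamma_0$ lies in a canonical finite extension of $W(m,L)$, from which commensurability of $\Gamma$ and $W(m,L)$ follows exactly as in Section \ref{sec:atlas}. Formally this amounts to interpreting the holonomy as an obstruction class and showing it vanishes precisely when $\Gamma_0$ respects the reflection structure up to a finite group of ``local gauge'' changes.

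The core of the proof is then the third step: starting from an arbitrary uniform lattice $\Gamma$ with separable quasiconvex subgroups, produce a finite index subgroup $\Gamma_0$ with trivial holonomy. Because the holonomy is a function of finitely many generators of the fundamental groupoid of $\Gamma \backslash X(m,L)$ (which is compact), there are only finitely many obstruction elements to kill. For each nontrivial holonomy element, represented by a loop in $\Gamma \backslash X(m,L)$, one associates a specific element $\gamma \in \Gamma$ which is necessarily loxodromic (hence generates an infinite cyclic quasiconvex subgroup $\langle \gamma \rangle$ of the hyperbolic group $\Gamma$). Using that some power $\gamma^n$ would trivialize the holonomy if it were in $\Gamma_0$, and that the subgroups preserving the local structures are themselves quasiconvex and separable by hypothesis, a standard Scott--Gitik--Wise argument produces a finite index subgroup of $\Gamma$ avoiding the ``bad'' coset. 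Intersecting over finitely many such steps gives the desired $\Gamma_0$ with trivial holonomy.

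The hardest step will be the precise definition of holonomy and the verification of the Coxeter analog of Theorem \ref{thm:commhol}. In the right-angled building case the cubical structure and the product decompositions of residues make the atlas formalism particularly clean; here polygons have $2m$ edges and the link $L$ is an arbitrary finite graph (possibly with nontrivial automorphisms fixing the star of a vertex), so the candidate finite group in which holonomy takes its values is more delicate. One must also check that the quasiconvex subgroups one needs to separate actually arise from the geometry in the required way: in particular, that stabilizers of reflection walls and their intersections with $\Gamma$ are quasiconvex in $\Gamma$, which uses the $\CAT(0)$ geometry of $X(m,L)$ and the hyperbolicity hypothesis. Once these technical points are settled, the deduction of Theorem \ref{thm:commseparbis} should run in strict parallel with the proof of Theorem \ref{thm:commsepar} carried out in Sections \ref{sec:atlas} and \ref{sec:killhol}.
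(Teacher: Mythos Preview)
Your three-step architecture is right and matches the paper: define a holonomy obstruction, prove that its vanishing on a finite index subgroup characterizes commensurability with $W$ (this is exactly Theorem~\ref{thm:commholcox}), then use separability to kill it. But the mechanism you propose in step three will not work as stated.

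The holonomy here is not a monodromy along loops of $\Gamma\backslash X$. The relevant invariant is attached to a \emph{system of local reflections} $\sigma=(\sigma_v)_v$ (Definition~\ref{defn:hol}): at each triangle $\tau$ one composes the $2m$ local reflections around the boundary of the polygon containing $\tau$, obtaining an element $h_\sigma(\tau)$ of a fixed finite group $F^+(\tau)F^-(\tau)$ of block automorphisms (Corollary~\ref{cor:decomposhol}). This is a purely local quantity and is unchanged by passing to any cover of $\Gamma\backslash X$: the polygon lifts, the local reflections lift, the composition is identical. There is no loxodromic element or loop to separate. What one must do instead is \emph{modify} $\sigma$ to $\sigma f$ by a rank--1 field $f$ so that the new holonomy is smaller (Lemma~\ref{lem:holcompos}). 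The paper organizes this modification along \emph{$e$--walls} (Definition~\ref{defn:ewall}), a parity-refined variant of walls chosen so that the two edges of a polygon dual to a given $e$--wall receive the same $\pm$ label on $\partial\pi$; this parity is exactly what lets a coherent modification along an $e$--wall cancel one factor of a decomposition $h_\sigma(\tau)=\varphi(\tau')^{-1}\varphi(\tau)$ (Theorem~\ref{thm:killholwall}). The subgroups whose separability is actually used are the $e$--wall stabilizers $\Gamma_M$ and their finite index subgroups, not cyclic subgroups, and not the reflection walls of $W$ (these coincide with $e$--walls only when $m$ is even). The negative curvature hypothesis enters as the combinatorial condition $(\mathrm{C^4})$ (Example~\ref{exmp:mconstant}), which guarantees that geometric $e$--walls are totally geodesic trees (Lemma~\ref{lem:ewalltree}) and hence that their stabilizers are quasiconvex; once this is in place, Theorem~\ref{thm:killehol} and Theorem~\ref{thm:commseparcox} finish the proof.
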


\begin{cor}
If $L$ is a finite bipartite graph and   $m\ge
3$ then all uniform lattices of $X(m,L)$ are
commensurable (hence they are linear). In
particular all uniform lattices of a given
locally reflexive even-gonal Fuchsian buildings
without holonomy are commensurable.
\end{cor}

\begin{proof}
The bipartite assumption implies that the first square subdivision of
$X(m,L)$ is a VH--complex of Wise (this argument is implicit in the
proof of Theorem 10.1 of
\cite{WisePolygons}). By Theorem 8.1 of
\cite{WisePolygons}, all uniform lattices of
$X(m,L)$ have separable quasiconvex
subgroups. Thus the corollary is a direct
application of \fullref{thm:commseparbis}. 
\end{proof}

The converse of \fullref{thm:commseparbis}
is true at least when $m=2$ \cite{Haglundgraphpdct} or more
generally when $m$ is even (the method we
developed in \cite{Haglundgraphpdct} for graphs of finite groups
also applies to this particular kind of Coxeter
groups). So once again in these complexes
$X(m,L)$ there is a preferred commensurability
class of uniform lattices, characterized by
important separability properties.

The  proof of
\fullref{thm:commseparbis} is parallel to
the proof of \fullref{thm:commsepar}, although
we were not able to find a common proof for
these results. Here we use the notion of
systems of local reflections and of their
holonomy which we introduced in
\cite{HagCras} and \cite{HaglundExUniHom}.
The basic system  of local reflections is given
by the restrictions of the reflections of
$W(m,L)$ to the neighbourhoods of edges. Its
holonomy is obtained by composing these
reflections along the boundary of a polygon:
this is trivial by definition of the relations
of a Coxeter group. In \fullref{sec:killholcox} we obtain an analogy to
\fullref{thm:commhol}.

\begin{thm}\label{thm:commholcox}
A uniform lattice
$\Gamma$ of  $X=X(m,L)$ is commensurable to
$W(m,L)$ in ${\rm Aut} (X)$ if and only if
$\Gamma$ has a finite index subgroup 
preserving a system of local reflections whose
holonomy is trivial.
\end{thm}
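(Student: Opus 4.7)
The plan is to mirror the proof of \fullref{thm:commhol}, with the role of the $\Delta$-atlas and its holonomy played here by a system of local reflections in the sense of \cite{HagCras, HaglundExUniHom}. The basic system $\sigma^0$ for $W=W(m,L)$ is obtained by restricting, to a small neighbourhood of each edge $e$ of $X=X(m,L)$, the unique reflection of $W$ that exchanges the endpoints of $e$; the fact that the holonomy of $\sigma^0$ around every $2m$-gon of $X$ is trivial is precisely the Coxeter relation $(st)^m=1$ for the two reflections across the pair of edges meeting that polygon.

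For the ``only if'' direction, assume $\Gamma$ is commensurable to $W$ in ${\rm Aut}(X)$. After conjugating $\Gamma$ by a suitable element, $\Gamma_0:=\Gamma\cap W$ has finite index in $\Gamma$. Since $\Gamma_0\subset W$ automatically preserves $\sigma^0$ and $\sigma^0$ has trivial holonomy, $\Gamma_0$ is the required finite-index subgroup.

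For the ``if'' direction, suppose a finite-index subgroup $\Gamma_0\le\Gamma$ preserves a system of local reflections $\sigma=(\sigma_e)_e$ with trivial holonomy. I would use the simple connectedness of $X$ together with the vanishing of holonomy to develop each local reflection $\sigma_e$ into a genuine global involution $\tilde\sigma_e\in{\rm Aut}(X)$: starting from a base edge, one extends along edge-paths, and the extension is independent of the chosen path precisely because the holonomy vanishes around each $2m$-gonal $2$-cell, hence around every $1$-cycle. The group $W'$ generated by the $\tilde\sigma_e$ is then a reflection subgroup of ${\rm Aut}(X)$ whose defining relations match those of $W(m,L)$, so $W'$ is conjugate to $W(m,L)$ inside ${\rm Aut}(X)$. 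Since $\Gamma_0$ preserves $\sigma$, it normalizes $W'$, and $\Gamma_0\cdot W'$ acts cocompactly on $X$; a standard fundamental-domain count then shows $\Gamma_0\cap W'$ has finite index in both factors, yielding commensurability of $\Gamma$ with $W(m,L)$.

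The main obstacle is the development step. Part (a), path-independence of the extension, reduces to trivial holonomy around each $2m$-gon by simple connectedness. Part (b), the verification that the $\tilde\sigma_e$ satisfy the order-$m$ (or $\infty$) relations coming from pairs of mirrors corresponding to adjacent vertices of $L$, should follow from the fact that the link of each vertex of $X$ is isomorphic to $L$ together with negative curvature, which forces the walls fixed by the $\tilde\sigma_e$ to meet combinatorially as they do in the model Davis complex. Part (c), the identification of the resulting group $W'$ with $W(m,L)$ up to conjugacy in ${\rm Aut}(X)$, and the finite-index assertion for $\Gamma_0\cap W'$, should follow from the rigidity of cocompact reflection actions on $X(m,L)$. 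I expect the technology of \cite{HagCras, HaglundExUniHom} concerning local reflections and their developing map to supply these verifications.
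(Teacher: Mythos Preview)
Your architecture is sound and ultimately rests on the same tool as the paper: the germ-extension principle (\fullref{prop:conjugsystem}), which is precisely your ``development step.'' But the paper applies it far more economically. Instead of developing each $\sigma_v$ to a global involution $\tilde\sigma_v$, assembling a group $W'=\langle\tilde\sigma_v\rangle$, and then separately verifying (b) Coxeter relations and (c) conjugacy with $W$, the paper takes a single germ $B_*\to B_*$ and extends it via \fullref{prop:conjugsystem} to an automorphism $\bar f$ with $\bar f_*(\sigma)=\sigma^W$. This one conjugation sends all of ${\rm Aut}_{\rm rk}(A,\sigma)$ onto ${\rm Aut}_{\rm rk}(A,\sigma^W)$; since $\Gamma'$ sits in the former with finite index (both are uniform lattices) and $W$ sits in the latter with finite index (\fullref{cor:conjugsystem}), commensurability is immediate. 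Your steps (b), (c) and the normalizer/fundamental-domain argument are thereby bypassed entirely.

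Your route can be completed, but the justifications you offer for (b) and (c) are not the right ones. For (b), link structure and negative curvature are irrelevant; the relation $(\tilde\sigma_{v_j}\tilde\sigma_{v_i})^{m_{ij}}=1$ holds because a direct gallery computation around the $2m_{ij}$-gon shows that $(\tilde\sigma_{v_j}\tilde\sigma_{v_i})^{m_{ij}}$ restricted to the base block is exactly the holonomy $h_\sigma(\tau)=\sigma_{v_{2m}}\circ\cdots\circ\sigma_{v_1}$, trivial by hypothesis, and uniqueness in \fullref{prop:conjugsystem} then forces the global element to be the identity. For (c), no abstract ``rigidity of cocompact reflection actions'' is needed or available in the paper; the conjugacy of $W'$ with $W$ is again \fullref{prop:conjugsystem}, applied this time with the pair $(\sigma,\sigma^W)$, which sends each $\tilde\sigma_v$ to the corresponding reflection of $W$. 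The ``only if'' direction is essentially the paper's, modulo the phrasing: rather than conjugating $\Gamma$, one transports $\sigma^W$ by the commensurating element to obtain the holonomy-free system preserved by a finite-index subgroup of the original $\Gamma$.
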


In order to deduce \fullref{thm:commseparbis}
from \fullref{thm:commholcox} it remains to
use the separability of certain quasiconvex
subgroups to produce a system of local
reflections without holonomy preserved by a
finite index subgroup of the uniform lattice
$\Gamma$.

Here is a sketch of the argument. Passing to a finite index subgroup, we may
assume that $\Gamma$ preserves a system
$\sigma$ of local reflections. An easy
computation shows that the holonomy of $\sigma$
splits into two parts. We then consider a
kind of walls inside the polygonal complex.
These walls are always quasiconvex under the
negative curvature assumption, and so are their
stabilizers (as well as the finite index
subgroups of these). This allows us to modify
$\sigma$ along the $\Gamma$--orbit of one wall
$M$ in such a way that the new system
of local reflections $\sigma'$ is invariant
under a finite index subgroup of $\Gamma$. Furthermore the holonomy of $\sigma'$ at
polygons transverse to a $\Gamma$--translate of
$M$ is ``half-trivial''. There are finitely many
orbits of walls, so if we iterate this process a finite
number of times we get a finite
index subgroup of $\Gamma$ preserving a system
of local reflections without holonomy.

\subsection{Extensions by finite groups}

In the two situations
above we noticed that quasiconvex subgroup
separability could be used to make certain conjugation obstructions
virtually vanish. We
apply this idea once more to answer a question
raised by Anna Erschler:

If $\Gamma$ is a group studied by Wise in \cite{WisePolygons} and $1\to F\to G\to \Gamma\to 1$
is an exact sequence of groups with $F$ finite,
is $G$ always residually finite? commensurable with $\Gamma$?

This is a natural question since Ragunathan proved in
\cite{Ragunathan84} that a uniform lattice of ${\rm
Spin}(2,n)$, $n$ odd, always admits a finite extension which is not
virtually torsion-free, hence not commensurable with the initial lattice.

The groups studied by Wise in \cite{WisePolygons} are
fundamental groups of certain nonpositively
curved square complexes. In the universal
covers  of such spaces there is a natural
notion of wall: an equivalence class
for the relation on edges generated by being
opposite inside one square. The stabilizer of a
wall is quasiconvex. In more general polygonal
complexes there are several natural notions of
walls.  We choose one and introduce a
combinatorial condition
$(\mathrm{C^2})$ on a polygonal complex $X$ so that when
$(\mathrm{C^2})$ is fulfilled our walls are
automatically convex for a piecewise Euclidean
nonpositively curved metric on $X$. We say that
a subgroup of the fundamental group of a $(\mathrm{C^2})$
polygonal complex $X$ is \textit{convex\/} whenever it
preserves a convex subset $Y\subset \widetilde
X$ of the universal cover and acts
cocompactly on $Y$.

\begin{thm}\label{thm:virttrivext}
Let $X$ denote a compact polygonal complex 
satisfying  the nonpositive curvature
condition $(\mathrm{C^2})$, for example, $X$ is a nonpositively curved square complex.
 Assume that
convex subgroups are
separable in
$\Gamma=\pi _1(X)$.

Then
for any extension $1\to G\to 
\wbar{\Gamma}\to \Gamma\to 1$ where $G$  
is finite, the group 
$\wbar{\Gamma}$ is commensurable with 
${\Gamma}$.
\end{thm}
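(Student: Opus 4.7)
The plan parallels the strategy used for the earlier commensurability theorems: identify a geometric obstruction expressed in conjugation terms and then progressively kill it on a finite-index subgroup via separability of convex subgroups. First reduce to a central extension: since $G$ is finite, $\mathrm{Aut}(G)$ is finite, so the conjugation homomorphism $\bar{\Gamma}\to\mathrm{Aut}(G)$ has a finite-index kernel $\bar{\Gamma}_0$. One checks that $\bar{\Gamma}_0\cap G=Z(G)$, and a finite-index subgroup of $\bar{\Gamma}_0$ meeting $Z(G)$ trivially automatically meets $G$ trivially. Hence it suffices to virtually split the central extension
\[
1\to Z(G)\to \bar{\Gamma}_0\to\Gamma_0\to 1,
\]
where $\Gamma_0\le\Gamma$ has finite index. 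Renaming, assume the given extension is already central with $G$ a finite abelian subgroup of $Z(\bar{\Gamma})$, classified by $\alpha\in H^2(\Gamma,G)$.

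Next exploit the wall geometry. By condition $(\mathrm{C^2})$ each wall $M\subset\widetilde X$ is a convex, essentially one-dimensional subcomplex on which the stabilizer $\Gamma_M$ acts properly and cocompactly. Bass--Serre theory realizes $\Gamma_M$ as a finite graph of finite groups, so a finite-index subgroup $\Gamma_M^\circ\le\Gamma_M$ is free; in particular $H^2(\Gamma_M^\circ,G)=0$ and $\alpha|_{\Gamma_M^\circ}=0$. Since $\Gamma_M$ is by definition convex, it is separable in $\Gamma$ by hypothesis, and then $\Gamma_M^\circ$, being of finite index in a separable subgroup, is itself separable in $\Gamma$.

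Now patch via separability. There are finitely many $\Gamma$-orbits of walls; fix representatives $M_1,\dots,M_k$. Using separability of each $\Gamma_{M_i}^\circ$ we choose a finite-index subgroup $H\le\Gamma$ such that $H\cap\Gamma_{M_i}\subseteq\Gamma_{M_i}^\circ$ for every $i$, so that $\alpha|_{H\cap\Gamma_M}=0$ for every wall $M$. The crux of the proof, and its main obstacle, is then to show that $\alpha|_H$ itself vanishes in $H^2(H,G)$. The idea is to exploit the wall decomposition of $\widetilde X/H$: under $(\mathrm{C^2})$ the walls cut this cover into convex pieces to which a Mayer--Vietoris-type patching argument can be applied, so that the triviality of $\alpha$ on each wall stabilizer glues to a global trivialization on $H$. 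Once $\alpha|_H=0$, the preimage of $H$ in $\bar{\Gamma}$ splits as $H\times G$, the factor $H$ is a finite-index subgroup of $\bar{\Gamma}$ isomorphic to the finite-index subgroup $H\le\Gamma$, and the asserted commensurability of $\bar{\Gamma}$ with $\Gamma$ follows.
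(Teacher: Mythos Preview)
Your reduction to a central extension is correct and matches the paper. The real gap is the final ``Mayer--Vietoris patching'' step, and in fact separability has not yet done any work by the time you reach it. Since $\Gamma=\pi_1(X)$ acts freely on $\widetilde X$, each $\Gamma_M$ already acts freely on the tree $|M|$ and is therefore free; hence $H^2(\Gamma_M,G)=0$ automatically, one may take $\Gamma_M^\circ=\Gamma_M$, and your separability step is vacuous: $H=\Gamma$ already satisfies $\alpha|_{H\cap\Gamma_M}=0$ for every wall $M$. Your argument would then force $\alpha=0$ in $H^2(\Gamma,G)$. But take $X$ to be the square torus, or more generally a closed surface presented as a single $4g$-gon: this satisfies $(\mathrm{C^2})$, convex subgroups are separable (Scott), wall stabilizers are infinite cyclic, yet $H^2(\pi_1 X,G)\cong G\neq 0$. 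The theorem still holds for these $X$---the class dies on a suitable finite cover---but your outline never produces that cover; vanishing of $\alpha$ on wall stabilizers is simply too weak a condition.

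The paper's argument is different in kind. It represents $\alpha$ by a \emph{cellular} $2$-cocycle $c$ on $X$ and kills it polygon by polygon. For each wall $M$ one builds a finite nonempty $\Gamma_M$-set $F(c,M)$ of $1$-cochains $u$, supported on the edges dual to $M$, satisfying $(c+\delta u)(\pi)=0$ for every polygon $\pi$ separated by $M$. Separability of finite-index subgroups of $\Gamma_M$ is used first to pass to a finite-index subgroup with no self-intersecting walls, and then to pass further to one whose $M$-stabilizer fixes $F(c,M)$ pointwise, so that an \emph{invariant} such $u$ can be chosen. Iterating over the finitely many wall orbits kills $c$ on every polygon, since every polygon is separated by some wall. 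Separability is thus used to make a specific cochain equivariant, not to trivialize a restriction map in group cohomology.
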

Any square complex studied in \cite{WisePolygons} satisfies
the above assumptions and its fundamental
group is residually finite (in fact every
quasiconvex subgroup is separable). In
particular any extension of such a fundamental
group is also residually finite.

\section{Outline of paper}

 The paper is organized in three parts. Theses part are independent,
except that we will use in the last part some (classical) definitions
introduced in the previous parts.

 In the first part we deal with (regular) right-angled buildings. We
identify buildings with their Davis--Moussong realization. In \fullref{sec:cubcompl} we recall
basic definitions and facts on cube complexes. This is mostly dedicated to the reader more
familiar with chamber systems and ``abstract'' buildings than with
$\CAT(0)$ cube complexes.

 In \fullref{sec:building} we define the right-angled building of a
graph product as a (typed) $\CAT(0)$ cube complex, on which the graph
product acts as a uniform lattice. We also introduce residues, a kind
of typed subcomplex. We are particularly interested in a specific 
kind of residues that will play the role of ``walls''. We establish a
product structure for these wall-residues, where the first factor is
compact. All the results in this section are rather classical, especially
for the reader used to the langauge of chamber systems.

 In \fullref{sec:holonomy} we define the holonomy of a uniform
lattice at a wall-residue to be the projection of the residue
stabilizer on the automorphism group of the compact factor.  Thus it is a finite
subgroup. We note that a naturally defined finite index subgroup of the
graph product has trivial holonomy.

We define atlases on our building $\Delta$ in \fullref{sec:atlas}.
We prove that the automorphism group of an atlas is discrete cocompact,
and that any two atlases are conjugate in ${\rm Aut}_0(\Delta)$. We also
note that the graph product preserves a naturally defined  atlas. So we
have a strategy for commensurating a uniform lattice $\Gamma$ with the
graph product: find an atlas on the building which is
invariant under some finite index subgroup of $\Gamma$. Since in fact
groups without holonomy always preserve an atlas, it remains to find a
finite index subgroup without holonomy. We do this in \fullref{sec:killhol} using the separability of the
wall-residues stabilizers and their finite index subgroups.

 In the second part we study extensions by finite groups. We first
introduce many basic definitions on polygonal complexes
(\fullref{sec:polycompl}), which will be used also in the third part.

 Then we define the walls of a polygonal complex
(\fullref{sec:walls}). When the number of sides of each polygon is
even, our walls are essentially obtained as unions of segments joining
inside a polygon the midpoint of an edge to the midpoint of the opposite
edge. We check that these walls correspond to totally geodesic subtrees
when some nonpositive curvature condition is fulfilled.

 In \fullref{sec:killcocycle} we consider a 2--cocycle (with values in
a finite abelian group) on a compact polygonal complex satisfying our
particular nonpositive curvature condition. We show that if the
fundamental group has separable quasiconvex subgroups then there is a
finite cover on which the lifted 2--cocycle vanishes. This proves
\fullref{thm:virttrivext}.

 In the last part we study commensurability with Coxeter groups.  We use
definitions given in the second part and adapt the argument there
to the noncommutative context.

In \fullref{sec:coxdavcomplex} we recall the definition and the
properties of the Davis--Moussong realization of a Coxeter group. We will
work with two-dimensional Coxeter groups, so our complexes are polygonal,
and the Coxeter group acts on it as a uniform lattice. 
We introduce a new type of walls in polygonal
complexes, which we call $e$--walls, in \fullref{sec:ewalls}.

In \fullref{sec:localrefl} we define systems of local reflections and
their holonomy. Here, a system of local reflection without holonomy plays
the same role as an atlas on a regular right-angled building. We note
that any two systems of local reflections without holonomy are conjugate
in the full automorphism group, and the automorphism group of any such
system is a finite extension of a conjugate of the initial Coxeter group.
So in order to commensurate a uniform lattice $\Gamma$ with our initial
Coxeter group, it is enough to produce a system of local reflections
without holonomy which is preserved by a finite index subgroup of
$\Gamma$.  This is done in \fullref{sec:killholcox}, using a nonpositive
curvature assumption and the additional hypothesis that $e$--walls
stabilizers in $\Gamma$ and  their finite index
subgroups are separable.

In the three situations we deal with, we note that the obstructions for
commensurability lie along wall-like convex subcomplexes. Under strong
enough separability  assumptions it is then possible to kill these
obstructions in a finite index subgroup, thus getting the commensurability
result.

The results in this paper raise some questions.
\begin{prob}
Let $1\to G\to \overline\Gamma\to\Gamma\to 1$ denote a central extension
of $\Gamma$, a uniform lattice of a (Gromov-hyperbolic) $\CAT(0)$ cube
complex of dimension $n$ at least $3$. Does the separability of quasiconvex subgroups of $\Gamma$ imply
the virtual triviality of the extension?
\end{prob}
Note that our answer is for $n=2$. In higher dimensions, our method does
not apply without changes. Indeed we are not able to kill \textit{naturally\/}
a 2--cocycle along the walls, even on a single $3$--cube.
   
The same question arises for commensurability of uniform lattices with Coxeter groups inside Davis complexes of higher
dimension:
\begin{prob}
Let $(W,S)$ denote some right-angled Coxeter system, that is, a graph product of order two groups along some graph $\mathcal G$. Assume that the Davis complex $X$ of $(W,S)$ has dimension
$n\ge 3$, so that $\mathcal G$ contains a complete graph on $4$ vertices. Does the separability of quasiconvex subgroups of a uniform lattice $\Gamma$ imply
that $\Gamma$ is commensurable with $W$?
\end{prob}

\begin{prob}
Find a Gromov-hyperbolic $\CAT(0)$ square complex that admits two
noncommensurable uniform lattices.
\end{prob}
Of course it would be even more interesting if the square complex was the
Davis--Moussong realization of a right-angled Coxeter group. Note that the product of  two (regular nonelementary) trees
admits noncommensurable uniform lattices. That is why we insist here on Gromov-hyperbolicity.

With Wise \cite{HaglundWise05}, we introduced a notion of \textit{special\/} uniform
lattice $\Gamma$ of a $\CAT(0)$ cube complex $X$: we say that $\Gamma$ is
special if there exists an injective morphism from $\Gamma$ into a finitely generated right-angled Coxeter group $W$, and an equivariant
isomorphism of
$X$ onto a convex subcomplex of the Davis realization of $W$. 

\begin{prob}
Are two special uniform lattices of a locally compact $\CAT(0)$ cube
complex always commensurable ?
\end{prob}

For example, a uniform lattice of  a product of two trees is (virtually) special exactly when it is virtually a product of uniform lattices of the trees. So in this case all special uniform lattices are commensurable.

\part{I\qua Commensurability of uniform lattices in right-angled buildings}
\section{Cube complexes} \label{sec:cubcompl}

 In this section we recall basic definitions and facts about cube
complexes (see also \cite[page 111]{BridsonHaefliger}).

\begin{defn}\label{defn:cub}
 A \textit{cube complex of dimension $0$\/} is a discrete, nonempty set. Any
map $f\co X\to Y$ between cube complexes of dimension $ 0$ is said to be
\textit{combinatorial\/}. If $C$ is a singleton and $X$ is a cube complex of
dimension $ 0$,  then any map $f\co C\to X$ is called a \textit{$0$--cube of
$X$}. Observe that the boundary of any unit interval is a cube complex of
dimension $0$.

 Assume that cube complexes of dimension $m\le  n$, combinatorial maps between such spaces, and also
$k$--cubes of such a space for $k\le m$ have been defined. Then a cube complex of
dimension $ n+1$ is obtained by gluing $(n{+}1)$--cubes to some cube
complex  of dimension $  n$. More precisely, let $X^n$ denote some cube
complex of dimension $  n$, and let $(f_i)_{i\in I}$ denote some (nonempty) family of combinatorial maps $f_i\co \partial C_i\to X^n$, where
$C_i$ is some Euclidean cube of dimension $n+1$ whose edges have length
1 and $\partial C_i$ denotes the combinatorial boundary of $C_i$ endowed
with its natural cube complex structure of dimension $  n$.  In fact we
assume by induction that a cube complex structure of dimension $n$  is
defined on the union $C^n$ of faces of dimension $n$ of any unit
Euclidean cube $C$ of dimension $p\ge n$, in such a way that for any
$p$--face $D$ of a unit Euclidean cube $C$ with $p\ge n$, the natural
inclusion $D^n\to C^n$ is combinatorial.

 We obtain a cube complex $X$ of dimension $n+1$ if we glue each cube
$C_i$ to $X^n$ along $\partial C_i$ using the maps $f_i$. Then $X^n$ is
naturally a subspace of $X$, called its \textit{$n$--skeleton\/}. Observe that
the maps $f_i\co \partial C_i\to X^n$ naturally extend to maps $f_i\co C_i\to
X$. These maps are the \textit{$(n{+}1)$--cubes of $X$\/} for $k\le n$ the \textit{$k$--cubes of $X$\/} are the $k$--cubes of $X^n$.  Now we define a map
$f\co X\to Y$ between  cube complexes of dimension $  n+1$ to be \textit{combinatorial\/} whenever it maps $X^n$ into $Y^n$, the induced map between
the $n$--skeleta is combinatorial and for each $(n{+}1)$--cube $f_i\co C_i\to X$
there exists an $(n{+}1)$--cube $g_j\co D_j\to Y$ and an isometry $h\co C_i\to D_j$
such that $ff_i=g_jh$.

 For any topological space $X'$, a cube complex structure of dimension
$n+1$ is given  by a homeomorphism $h\co X\to X'$, where $X$ is a cube complex constructed as
above. Two homeomorphisms $h_1,h_2\co X\to X'_1,X'_2$ define the same
structure whenever $h_2\circ {h_1}^{-1}$ is combinatorial. Pushing by $h$
defines the $n$--skeleton of $X'$ and the $(n{+}1)$--cubes of $X'$. 
Combinatorial maps between topological spaces equipped with a cube complex structure of dimension $n+1$ are similarly defined. The open cubes of
$X'$ are the image of the interior of $C$ by some cube $C\to X'$. Clearly
$X'$ is the disjoint union of its open cubes: every point in $X'$ is
contained in the interior of a unique cube.

 It remains to define a natural cube complex structure of dimension
$n+1$  on the union $C^{n+1}$ of faces of dimension $n+1$ of any unit
Euclidean cube $C$ of dimension at least $n+1$. Let $C^{n}$ denote the union
of the $n$--faces of $C$. By induction this is a cube complex of
dimension $n$. For any $(n+1)$--face $F$ of $C$, we have $\partial F=F^n$
and we know by induction that the inclusion $F\to C$ induces a
combinatorial map $F^n\to C^n$. The cube complex obtained by gluing all
$(n+1)$--faces to the $n$--skeleton $C^n$ has a natural homeomorphism to
$C^{n+1}$. The naturality of this structure is straightforward.

 We give the usual names to low dimensional cubes: $0$--cubes are
vertices, $1$--cubes are edges, $2$--cubes are squares. We sometimes
identify $k$--cubes with their range (note that distinct cubes have
distinct ranges). Thus the $0$--skeleton may be viewed as the set of
vertices.

 A subcomplex of a cube complex  is a union of cubes. It inherits
naturally a cube complex structure for which the inclusion is
combinatorial.
\end{defn}

 \begin{defn}\label{defn:simplicial} A \textit{multisimplicial complex\/} $X$, a \textit{$k$--simplex\/}  of $X$,  
 a \textit{combinatorial map\/} $X\to Y$ between multisimplicial complexes and  
 a \textit{subcomplex\/} are obtained from \fullref{defn:cub} by replacing
unit Euclidean cubes by affine simplices (isometries of cubes have to be
replaced by affine isomorphism of simplices).  Observe that a
combinatorial map preserves the dimension of simplices.

 A \textit{simplicial complex\/} is a multisimplicial complex  $X$ such that
each simplex is injective and two distinct $k$--simplices  $f,f'\co \Sigma^k
\to X$ have distinct boundaries, ie, $f(\partial \Sigma^k)\neq
f'(\partial \Sigma^k)$. It is easy to check that these simplicial
complexes are in one-to-one correspondence with abstract simplicial
complexes, ie, collections of nonempty subsets, called simplices, of a
given set $I$ such that if $\sigma$ is a
simplex and if $\tau $ is a nonempty subset of $\sigma$, then $\tau$ is a
simplex too. For any simplicial complex $N$ we will denote by
$\wbar N$ the poset ordered by inclusion whose elements are the simplices of $N$, together
with the empty set.

The \textit{join\/} of two (abstract) simplicial complexes $X,Y$ is the
simplicial complex $X*Y$ with vertex set $X^0\sqcup Y^0$ and with
simplices the subsets $\sigma\cup \tau$, for $\sigma\in \wwbar X,\tau\in
\wwbar Y$. If
$\sigma,\tau$ are two simplices of a simplicial complex $X$ such that
$\sigma\cup \tau$ is a simplex, we say that $\sigma$ and $\tau$ are
\textit{joinable} in $X$.
The join is commutative and associative.

 A simplicial complex $L$ is said to be a \textit{flag\/} if any complete
subgraph of the $1$--skeleton is the 1--skeleton of a unique simplex of $
L$ (a complete subgraph of $L^1$ is a subcomplex $K$ such that any two
vertices of $K$ are contained in an edge of $K$).  
\end{defn}
For a reference on simplicial complexes see
Rourke and Sanderson \cite[Chapter 2]{RourkeSanderson}.

\begin{exmp}[Thickened octahedron]\label{exmp:thickocta}
 A \textit{thickened octahedron\/}  is a simplicial complex $L$ obtained by
iterated joins with discrete sets.

More precisely let $L_1,\cdots,L_n$ denote simplicial complexes of
dimension 0. Then $L=L_1*\cdots*L_n$ is a thickened octahedron.

Assume that a group $G$ acts simplicially on a simplicial complex $L$ and that there is a simplex $\sigma=\{x_0,x_1,\dots,x_n\}$ of $L$ such that  $\sigma$ is a strict fundamental domain for the action of $G$ on $L$, ie, the orbit of any simplex of $L$ under $G$ contains exactly one simplex of $\sigma$. Let $G_i$ denote the stabilizer of $\sigma_i=\{x_0,x_1,\dots,\hat{x_i},\dots,x_n\}$. Then $L$ is a thickened octahedron provided $G$ is generated by $\{G_i\}_{i\in I}$ and $[G_i,G_j]=1$ for $i\neq j$.

To check this, let $L(x_0)$ denote the subcomplex of $L$ consisting in those simplices not containing $x_0$ but joinable with $\{x_0\}$. Observe that the stabilizer $G(x_0)$ of $x_0$ in $G$  acts on $L(x_0)$ with $\sigma_0$ as a strict fundamental domain. The stabilizers of the codimension--one faces of $\sigma_0$ are precisely $G_1,\dots,G_n$, which commute by assumption. We claim that $G(x_0)$ is generated by $G_1,\dots,G_n$. Indeed for any $g\in G$ we have $g=g_0h=hg_0$ where $g_0\in \<G_1,\dots,G_n\>$ and $h\in G_0$. 
It follows that $g(\sigma)=g(x_0*\sigma_0)=k(x_0)*g_0(\sigma_0)$. 
Thus if $g\in G(x_0)$ then $k(x_0)=x_0$, whence $k\in\cap_{i=0}^{i=n}G_i$ and $g\in \<G_1,\dots,G_n\>$. 
Since $\sigma$ is a fundamental domain, the formula $g(\sigma)=k(x_0)*g_0(\sigma_0)$ also shows that $L$ is the join of $L(x_0)$ and the set of vertices joinable with $\sigma_0$. 
We conclude that $L$ is a thickened octahedron by induction on the dimension of $\sigma$.
\end{exmp}

 \begin{defn}[\rm Links]\label{defn:link} Let $C$ be a Euclidean cube
and let $p$ be one of its vertices. We define ${\rm link}(p,C)$ as the
(codimension 1) simplex generated by the midpoints of edges of $C$
containing $p$. Note that if $D$ is a strict face of $C$ containing $p$
then ${\rm link}(p,D)\subset\partial\,{\rm link}(p,C)$.

Let $X$ be a cube complex and let $v$ be a vertex of $X$.
For each  based $(k{+}1)$--cube $f\co (C,p)\to (X,v)$ we obtain by restriction a
map $\sigma_f\co {\rm link}(p,C)\to X$.  The \textit{link of $X$ at $v$\/} is
the multisimplicial complex ${\rm link}(v,X) \subset X$ such that
each $k$--simplex is $\sigma_f$ for some $f$. We say that $X$ is locally finite if
for each vertex $v$, the complex ${\rm link}(v,X)$ has finitely many vertices.
 
  We define similarly the link of a vertex in a multisimplicial complex.
The link of a vertex in a simplicial complex is still simplicial.

  We say that $X$ is a \textit{simple\/} cube complex if for each vertex $v$
the complex ${\rm link}(v,X)$ is a simplicial complex. This amounts to demanding
that the cubes of $X$ are locally injective, and any combinatorial map to
$X$ defined on the union of faces containing a given vertex of a cube
extends to at most one combinatorial map on the whole cube.
\end{defn}

 The following construction appears in Davis, Januszkiewicz and Scott
\cite[1.2 page 503]{DavisJanusScott98}.

\begin{lem}\label{lem:cubcone}
Let $N$ denote any simplicial complex.  Then there exists a cubical
complex $C(N)$ with a distinguished vertex $v$, called the {\rm center of
$C(N)$}, such that ${\rm link}(v,C(N))$ is isomorphic to $N$, and for every
cube $C$ of $C(N)$ there exists a unique minimal cube $Q(C)$ containing
$v\cup C$. Moreover such a cubical complex is unique up to combinatorial
isomorphism preserving the centers. The complex $C(N)$ is called the
{\rm cubical cone on $N$}. For every  combinatorial map $f\co N\to M$ there
is a unique combinatorial map $c(f)\co C(N)\to C(M)$ inducing $f$ on the
link of the center of $C(N)$. We will denote by $\partial C(N)$ the union
of cubes not containing $v$.

For any cube  $C$ of $C(N)$, $C\neq\{v\}$, the cube $Q(C)$ defines a
certain simplex $t(C)$ in ${\rm link}(v,C(N)) = N$. Thus we get a map $t$
assigning to each vertex $x$ of $C(N)$, $x\neq v$, the simplex $t(\{x\})$
of $N$. We extend the definition of $t$ by declaring $t(v)=\emptyset$.
On the set of vertices of each cube of $C(N)$ the map $t$ is a
bijection onto some interval of the poset $\wbar N$.

The greatest element of this interval is $t(C)$. We will denote by 
$\underline t(C)$ the least element of  this interval. For each
$p\in C(N)$ let $C(p)$ denote the unique cube of $C(N)$
containing $p$ in its interior. Then we define $t(p)$ and $\underline
t(p)$ as $t(C(p))$ and $
\underline t(C(p))$. The map $t$ is the \textit{typing map on $C(N)$\/}, and $t(p)$ is \textit{the type of the point $p$\/}.

 The map $t$
induces a bijection from   the set of vertices of the cubical cone $C(N)$
onto
$\wbar N$.

\end{lem}

\begin{figure}[ht!]
\labellist
\small\hair 2pt
\pinlabel $t$ at 117 258
\pinlabel $s$ at 119 390
\pinlabel $a$ at 110 320
\pinlabel $\tau$ at 163 320
\pinlabel $d$ at 190 365
\pinlabel $b$ at 180 280
\pinlabel $u$ at 233 332
\pinlabel $c$ at 272 308
\pinlabel $v$ at 313 332
\pinlabel $a$ at 422 408
\pinlabel $t$ at 424 297
\pinlabel $\tau$ at 537 468
\pinlabel $s$ at 516 390
\pinlabel $b$ at 545 353
\pinlabel $\emptyset$ at 515 248
\pinlabel $u$ at 632 310
\pinlabel $v$ at 660 215
\pinlabel $c$ at 760 280
\endlabellist
\centering
\includegraphics[width=12cm]{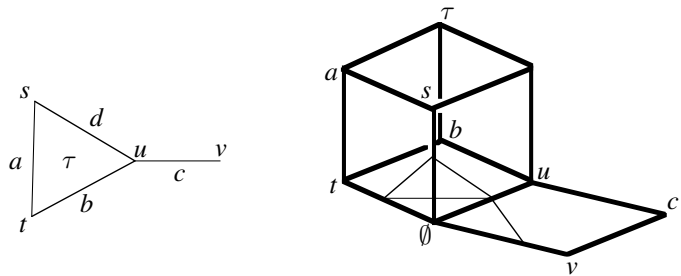}
\caption{$N$ with the cubical cone on $N$ and typing map. On the left,
$N$ has four vertices $s, t, u, v$, four edges $a, b, c, d$ and one triangle $\tau$.}
\end{figure}

\begin{proof}[Proof: Uniqueness]
Let $(C,v)$ be a cube complex as in the lemma, and let
$(C',v')$ denote any other based cube complex endowed with an
isomorphism $\varphi$ from ${\rm link}(v,C)$ to ${\rm link}(v',C')$. For any cube
$Q$ of $C$ containing $v$ there is a (unique) combinatorial map $f_Q\co Q\to
C'$ inducing $\varphi$ on ${\rm link}(v,Q)$. For any cube $\xi$ of $C$ we
define $f_{\xi}$ as the restriction of $f_{Q(\xi)}$ to $\xi$. These maps
fit together and produce a based combinatorial map $f\co (C,v)\to (C',v')$
inducing $\varphi$ on ${\rm link}(v,C)$. The uniqueness of such a map is
straightforward. The universal property we have just described
implies as usual the uniqueness of the based cube complex $(C,v)$ up to
isomorphism.

{\bf Existence}\qua In the Hilbert space $\ell^2(N^0)$ consider the union of
cubes whose vertices are vectors of the form: $\sum_{v\in \sigma}x_ve_v$,
where $(e_v)_{v\in N^0}$ denotes the canonical Hilbert basis, $x_v=0$ or
$1$, and $\sigma$ denotes some simplex of $N$.

If $f\co N\to M$ is a combinatorial map, it induces a linear map
$\vec f\co \ell^2(N^0)\to \ell^2(M^0)$.  Restricting this to $C(N)$ yields a combinatorial map 
$c(f)\co C(N)\to C(M)$ with the desired property. Two combinatorial maps $C(N)\to
C(M)$ which induce $f$ on ${\rm link}(v,C(N))$ have to agree on each cube
containing $v$. Uniqueness then follows because each cube of $C(N)$ is
contained in a cube containing $v$.

{\bf Properties of the map $t$}\qua Let $Q$ be a cube containing $v$. A
vertex $x$ of $C$ is opposite to $v$ in $Q$ if and only if $Q(\{x\})=Q$.
So for the vertex $v_Q$ opposite to $v$ in $Q$ we have $t(\{v_Q\})= {\rm
link}(v,Q)$. For any cube $C$ of $C(N)$ we set $v_C=v_{Q(C)}$. Note that
$v_C$ is a vertex of $C$. Finally define $w_C$ to be the vertex of $C$
opposite to $v_C$ in $C$. Observe  that the cube $Q(C)$ is the product of
$C$ and a unique face $q(C)$ containing $v$ and meeting $C$ at the single
vertex $w_C$. For example, $w_C=v$ if and only if $v\in C$.
Then for any vertex $x$ of $C$, the smallest cube containing 
$\{v\}\cup\{x\}$ in fact contains $w_C$ and is contained in
$Q(C)=Q(\{v_C\})$. Thus we get $t(q(C))\subset t(\{x\})\subset t(Q(C))$.
Clearly $t$ is injective on the set of $0$--cubes. So $t$ induces a
bijection between the set of vertices of $C$ and the interval 
$\{\tau\in
\wbar N, t(q(C))\subset \tau \subset t(Q(C))\}$. Since $t\co C^0\to
\wbar N$ is clearly onto it is in fact a bijection.
\end{proof}

\begin{lem}\label{lem:joinproduct}
The cubical cone of a join is the product of the cubical cones.

More precisely let  $N_1,N_2$ be two abstract simplicial complexes,
with vertex sets $I_1,I_2$. Let $N=N_1*N_2$ denote the join of $N_1$ and
$N_2$.

Then there is a  combinatorial isomorphism $j\co C(N_1)\times C(N_2)\to
C(N)$ such that $t(j(p_1,p_2))=t_1(p_1)*
t_2(p_2)\subset I=I_1\sqcup I_2$. Such an isomorphism is unique.
\end{lem}

\begin{proof}
For existence, consider the natural isometry 
$\ell^2(I_1)\times \ell^2(I_2) \to \ell^2(I)$ and restrict it to
$C(N_1)\times C(N_2)$. The type-preserving condition implies uniqueness.
\end{proof}

\begin{lem}[Links in the cone]\label{lem:linkscone}
Let $N$ be a simplicial complex, $w$ a vertex of the cubical cone $C(N)$, and
$t(w)=J\subset \wbar N$ the type of $w$.

Then a cube $C$ of $C(N)$ contains $w$ if and only if $t(w)\subset t(C)$.

In particular the link of $w$ in $C(N)$ is isomorphic to the union of
simplices of
$N$ containing $t(w)$ under the map assigning to each simplex $\sigma$ of
${\rm link}(w,C(N))$ the simplex $t(C(\sigma))$, where $C(\sigma)$
is the cube of $C(N)$ containing $w$ and corresponding to $\sigma$.
\end{lem}

\begin{proof}
Indeed $w\in C\iff Q(\{w\})\subset Q(C)$.
\end{proof}

 Note that cubical cones on the boundary of simplices are the building
blocks of a subdivision of simplicial complexes into cubical complexes.
Thus topologically, cube complexes and simplicial complexes are the same.

The specificity of cube complexes appears as soon as geometry is involved.
According to Gromov \cite{Gromov87},
there are very simple local conditions on a cube complex which make it  
a nonpositively curved space in the sense of Aleksandrov:
\begin{thm}\label{thm:notriangle}
Let $X$ denote any
locally finite cube complex. Endow $X$ with the length metric inducing on each cube 
the unit Euclidean metric. Then $X$ is locally $\CAT(0)$ if and only if
each vertex link of $X$ is a flag simplicial complex. 
\end{thm}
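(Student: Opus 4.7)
My plan is to reduce the claim to a link condition at each vertex, then invoke a cone theorem to translate it into a question about piecewise-spherical geometry, and finally apply Gromov's combinatorial characterization of $\CAT(1)$ all-right spherical complexes. Specifically, by Aleksandrov's local characterization $X$ is locally $\CAT(0)$ iff each point has a small convex $\CAT(0)$ neighborhood; for points in the interior of cubes this is automatic (a Euclidean neighborhood exists), so the only nontrivial case is at each vertex $v$.

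Next, for $\varepsilon<1/2$ the open ball $B(v,\varepsilon)$ is isometric to the open Euclidean cone of radius $\varepsilon$ over $L={\rm link}(v,X)$, where $L$ is endowed with the piecewise-spherical metric in which each simplex (coming from a cube containing $v$) is an all-right spherical simplex: the edges of a unit cube meeting at $v$ are pairwise orthogonal, so their midpoints form an all-right simplex in the unit tangent sphere at $v$. By the Berestovskii--Bridson cone theorem, this Euclidean cone is $\CAT(0)$ iff $L$ is $\CAT(1)$. The theorem is therefore equivalent to: an all-right piecewise-spherical simplicial complex is $\CAT(1)$ iff it is flag.

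Finally I would establish this last equivalence, Gromov's link lemma. One direction is short: if $L$ fails to be flag, take a minimal pairwise-adjacent non-spanning set of vertices; this yields three pairwise adjacent vertices bounding no $2$-simplex, and hence an isometrically embedded closed curve of length $3\pi/2<2\pi$, which obstructs $\CAT(1)$. For the converse, I would invoke Bowditch's theorem, which reduces $\CAT(1)$ for a piecewise-spherical complex with finitely many isometry types of shapes to the absence of isometrically embedded closed geodesic loops of length below $2\pi$. Then I would argue combinatorially that in an all-right spherical complex, a locally geodesic closed loop must enter and leave each simplex through antipodal points, so the successive vertices that it visits are pairwise adjacent in $L^1$; the flag hypothesis forces these vertices to lie in a common simplex, inside which no closed geodesic of length $<2\pi$ can exist, giving the desired contradiction.

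The main obstacle I expect is this converse direction of the link lemma: even granted Bowditch's reduction, one must carefully analyse how locally geodesic closed curves traverse all-right spherical simplices. Since the whole statement is classical and fully developed in Bridson--Haefliger \cite{BridsonHaefliger} (Chapter II.5, including the cone theorem and Gromov's link condition for cube complexes), in practice I would carry out Steps 1 and 2 explicitly and then cite the reference for Step 3 rather than reproduce the full combinatorial argument.
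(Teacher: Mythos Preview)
The paper does not prove this theorem at all: it is stated as a result of Gromov \cite{Gromov87} and immediately used to motivate the definition that follows. So there is no ``paper's own proof'' to compare against; both you and the paper ultimately defer to the literature, with you pointing to \cite{BridsonHaefliger} and the paper to \cite{Gromov87}. Your outline (reduce to vertices, identify a small ball with the Euclidean cone on the all-right spherical link, then invoke Gromov's flag criterion for $\CAT(1)$) is exactly the standard route, and your stated intention to cite \cite{BridsonHaefliger} for Step~3 matches how this result is normally handled.

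One genuine inaccuracy in your sketch is worth flagging. In the ``not flag $\Rightarrow$ not $\CAT(1)$'' direction you write that a minimal pairwise-adjacent non-spanning set of vertices ``yields three pairwise adjacent vertices bounding no $2$-simplex''. This is false: minimality means every \emph{proper} subset spans a simplex, so if the minimal set has $k+1\ge 4$ vertices you get an isometric copy of $\partial\Delta^k$ in which every triple \emph{does} span a $2$-simplex (think of the boundary of a $3$-simplex). The short geodesic of length $3\pi/2$ you want simply is not there in that case. The correct argument for both directions of Gromov's lemma is by induction on dimension, using that links of vertices in a flag complex are again flag and that a closed local geodesic of length $<2\pi$ can be pushed into the link of a nearest vertex; this is precisely what is carried out in \cite[II.5.18--5.20]{BridsonHaefliger}. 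Since you already plan to cite that reference for Step~3, the plan survives, but the specific shortcut you sketched for one direction does not.
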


This justifies the following:

\begin{defn}\label{defn:NPC}
  We say that a cube complex $X$ is  \textit{nonpositively curved\/} if for
each vertex $v$ the complex ${\rm link}(v,X)$ is a flag simplicial complex. We
say that $X$ is $\CAT(0)$ whenever it is simply connected and nonpositively curved.
\end{defn}

For references on the $\CAT(0)$ inequality, see Bridson and Haefliger \cite{BridsonHaefliger}.
For local combinatorial conditions on simplicial complexes implying a nonpositively curved behaviour, see \cite{HaglundHighDimHyper} and
\cite{JanuszkiewiczSwiatkowskiSNPC}.

\section[The right-angled building of a graph product of finite groups]{The right-angled building of a graph product\\ of finite groups}\label{sec:building}

All the material in this section is well-known to people working on
graph products or right-angled buildings. For convenience we have written
explicitly most of the arguments.

\subsection{Graph products}

\begin{defn}[Graph products]\label{defn: graphproduct}
 Let $\mathcal G$ denote any simplicial graph with vertex set $I$.
Suppose that for each $i\in I$ we are given a group $G_i$. We define the
\textit{graph product\/} of $\{G_i\}_{i\in I}$ along $\mathcal G$ to be the group
$\Gamma=\Gamma({\mathcal G},(G_i)_{i\in I})$ defined as follows.

 $\Gamma$ is the quotient of the free product $\ast\,G_i$ by the
normal subgroup generated by all elements of the form
$g_ig_jg_i^{-1}g_j^{-1}$, where $g_i\in G_i,g_j\in G_j$ for distinct adjacent vertices $i,j$ in
$\mathcal G$.

 For any subset $J\subset I$ we denote by $\Gamma_J$ the subgroup of
$\Gamma$ generated by the image of $\{G_i\}_{i\in I}$ under the
natural morphism $G_j\to \Gamma$, and we denote by ${\mathcal G}_J$ the
graph on $J$ induced by $\mathcal G$.

 A subset $J\subset I$ is said to be \textit{spherical\/} whenever any two
distinct vertices of $J$ are adjacent in $\mathcal G$. In the sequel we
denote by $N({\mathcal G})$ the abstract simplicial complex on $I$ whose
simplices are the nonempty spherical subsets $J\subset I$.

 For any element $i\in I$ we will denote by $i^\perp$ the set of $j\in
I\setminus\{i\!\}$ such that $j$ is adjacent to $i$ in $\mathcal G$. We also
consider the subset $i\perpeq=i^\perp\cup\{i\!\}$.
\end{defn}

 For example if $\mathcal G$ is a complete graph then $\Gamma$ is the
direct product of the groups $G_i$, $i\in I$. In contrast, if
$\mathcal G$ is totally disconnected then  $\Gamma$ is the free product
$\ast_{i\in I}\,G_i$.

Some references on graph products (and their buildings) are Green
\cite{GreenGraphProd}, Januszkiewicz and {\'S}wi{\c{a}}tkowski \cite{JanuszkiewiczSwiatkowskiGraphProducts},
and Meier \cite{Meier96}.

In the rest of the first part we assume we are given a graph
$\mathcal G$ with vertex set $I$, together with a group $G_i$ for each
$i\in I$. We denote by $\Gamma$ the graph product of $\{G_i\}_{i\in I}$ along
$\mathcal G$.

The following result is proved in
\cite{JanuszkiewiczSwiatkowskiGraphProducts}, for example.

\begin{lem}[Naturality]\label{lem:naturality}
 For any $i\in I$ the natural map $G_i\to \Gamma$ is injective. So we
will identify $G_i$ and $\Gamma_i$. More generally for any subset
$J\subset I$ the natural map from the free product $\ast_{j\in J}\, G_j$ to the group $\Gamma$ induces an isomorphism $\Gamma({\mathcal
G}_J,(G_i)_{i\in J})\to \Gamma_J$.
\end{lem}

\begin{proof}
 Consider the morphism from the free product $\ast_{i\in I}\, G_i$
onto the free product $\ast_{j\in J}\, G_j$  that kills each $G_k$,
$k\not\in J$, and induces the identity on each $G_j$, $j\in J$. This
morphism induces a morphism $\rho_J$ from $\Gamma({\mathcal G},(G_i)_{i\in
I})$ to $\Gamma({\mathcal G}_J,(G_i)_{i\in J})$. Now the natural map
$\Gamma({\mathcal G}_J,(G_i)_{i\in J})\to\Gamma({\mathcal G},(G_i)_{i\in
I})$ postcomposed with $\rho_J$ is the identity.
 \end{proof}

\subsection{The building of a graph product}

 \begin{defn}\label{defn:cubgraph}
 Let $C=C(N({\mathcal G}))$ denote the cubical cone over $N({\mathcal
G})$ (see \fullref{lem:cubcone}). Consider the equivalence
relation on $\Gamma \times C$ defined by 
$$(\gamma,p)\sim (\gamma',p')
\iff p=p'\in\partial C \text{ and } \gamma^{-1}\gamma'\in \Gamma_J for
J=\underline t(p).
$$ Set $\Delta=\Delta({\mathcal G},(G_i)_{i\in I}) =
{\Gamma}\times C /\sim$, and denote by $[\gamma,p]$ the equivalence
class of $(\gamma,p)$.

For $\gamma \in \Gamma$, the image under the natural map $\pi\co {\Gamma}\times C\to \Delta$ of the
$\{\gamma\}\times C$  is called a \textit{chamber of $\Delta$\/}, denoted by $C_{\gamma}$.  We 
denote the base chamber $C_{1_{\Gamma}}$ by $C_*$.
\end{defn}

\begin{lem} Let $C$ be as in \fullref{defn:cubgraph}.  Then the following observations easily hold.
\begin{enumerate}
\item The left action of $\Gamma$ on ${\Gamma}\times C$ induces an
action by homeomorphisms on $\Delta$.

\item The second projection ${\Gamma}\times C\to \{1_{\Gamma}\}\times C$
induces a topological retraction map $\rho\co \Delta\to C_*$. The natural
map $C\to C_*$ given by the inclusion $C\to \{1_{\Gamma}\}\times C$ postcomposed
with $\pi$ is a homeomorphism. Thus we may identify $C$ with $C_*$.

\item The map $\rho$ is invariant under $\Gamma$.
 In fact two points of $\Delta$ belong to the same $\Gamma$--orbit if and only if they have the same image under $\rho$.

\item The base chamber $C_*$ is a strict fundamental domain for the action of $\Gamma$ on $\Delta$. So every $\Gamma$--orbit in $\Delta$ meets $C_*$ at a one and only one point.
\end{enumerate}
\end{lem}
The following is straightforward.

\begin{lem}
Each building $\Delta$ admits a unique cube complex structure for which the natural map $\pi\co {\Gamma}\times C\to \Delta$  is combinatorial.
\end{lem}

We will always endow $\Delta$ with this structure. 

\begin{rem} Note that (1) chambers are subcomplexes, (2) $\Gamma$ acts as a group of combinatorial automorphisms and 
(3) the retraction $\rho\co \Delta\to C$ is combinatorial.
\end{rem}

\subsection{The typing map}

\begin{defn}\label{defn:typ}
 Recall from \fullref{lem:cubcone} that we have already defined a map
$t$ assigning to each vertex of  the cubical cone $C=C(N({\mathcal G}))$
an element of $\overline{N({\mathcal G})}$, ie,  a simplex of
$N({\mathcal G})$ or the empty subset of $I$. This map is also defined
on the set of vertices of ${\Gamma}\times C$ and induces a map
$t\co \Delta^0\to \overline{N({\mathcal G})}$, called the \textit{typing map}
on $\Delta$.  On the set of vertices of the  base chamber $C_*$ identified
with $C$, the map $t$ is the usual one. We define similarly for any cube
$Q$ of $\Delta$ the elements $t(Q)$ or $\underline t(Q)$ by first taking
the image $\rho(Q)$ inside $C_*\simeq C$ and then applying the usual maps
$t$ or $\underline t$. Equivalently $t(Q)$ is the maximum of all $t(v)$
for $v$ a vertex of $Q$ and $\underline t (Q)$ is the minimum of  all
$t(v)$ for $v$ a vertex of $Q$.

 The \textit{rank of a vertex $v$\/} is the cardinality ${\rm rk}(v)$ of 
$t(v)$. The rank--0 vertices will be called \textit{centers of chambers\/}. We
will denote the center of $C_*$ by $v_*$.

 We will denote by ${\rm Aut}_0(\Delta)$ the subgroup of the group of
combinatorial automorphisms of $\Delta$ whose elements preserve the
typing map $t$. Note that $t$ is invariant under $\Gamma$, ie, $\Gamma\subset {\rm Aut}_0(\Delta)$) and $\rho$.
\end{defn}

\begin{lem}\label{lem:stabtyp}
 For any $J\in \overline{N({\mathcal G})} $,  each chamber contains a
single vertex $v$ of type $t(v)=J$. The stabilizer of the unique vertex
of type $J$ in $C_*$  is the  group $\Gamma_J$. The stabilizer of a vertex
acts simply transitively on the set of chambers containing this vertex.

 In particular
$\Gamma$ is simply transitive on centers of chambers and on chambers.
\end{lem}
This implies that the action of $\Gamma$ on $\Delta$ is faithful. Hence
we will identify $\Gamma$ with its image in ${\rm Aut}_0(\Delta)$.

\begin{proof}
 Using the $\Gamma$ invariance of $t$, it suffices to prove the first
assertion for the base chamber $C_*$.

Let $v_J$ denote the unique vertex of $C$ with $t(v_J)=J$ (see
\fullref{lem:cubcone}). Then
$[1,v_J]$ is the unique vertex of $C_*$ with type $J$. Now
$\gamma[1,v_J]=[1,v_J]$ if and only if $\gamma\in \Gamma_J$, by definition of $\Delta$.

For $J=\emptyset$, we have $\Gamma_J=\{1\}$,
thus $\Gamma$ acts freely on centers of chambers (by construction it acts
transitively).

In particular if $\gamma C_*=C_*$ then $\gamma$ fixes the center of $C_*$
(we have just seen that there is a unique rank--0 vertex in each chamber).
Thus $\gamma=1$. The simple transitivity of $\Gamma$ on the set of
chambers follows.

 Let us check now that $\Gamma_J$ acts simply
transitively on the set of chambers containing $v_J$. The action is free
because $\Gamma$ acts freely on the set of chambers. Let $C=\gamma C_*$
denote a chamber containing $v_J$. Then $\gamma v_J$ is a vertex of type
$J$ inside $C$. By the first part of the lemma, we have $\gamma v_J=v_J$.
Thus $\gamma\in \Gamma_J$ as claimed.
\end{proof}

For each vertex $v$ of $\Delta$ we denote by ${\mathcal C}(v)$ the set of
chambers of $\Delta$ containing $v$. This set is in one-to-one
correspondence with the stabilizer of $v$ in $\Gamma$ by the previous
lemma. As a consequence we obtain the following:

\begin{cor}\label{cor:chamberneighbor} For each vertex $v$ of $\Delta$ we
have ${\rm link } (v,\Delta)=\bigcup_{C\in{\mathcal C}(v)}{\rm link }
(v,C)$.

In particular,  the building
$\Delta$ is locally compact if and only if the graph $\mathcal G$ is
finite and each generating group $G_i$ is finite.
\end{cor}

\subsection{Residues}

\begin{defn}[\rm Chamber system and residues]\label{defn:res}
 Let  $\mathcal C$ denote the set of chambers of $\Delta\!$. Note that 
$\mathcal C\!$ is identified under $\gamma\!\mapsto \!C_{\gamma}$ with $\Gamma$
(see \fullref{lem:stabtyp}). We follow Ronan
\cite{Ronan89} to define on the set
$\mathcal C$ a structure of \textit{chamber system over $I$\/}. For each $i\in I$ we say that two chambers $C_1,C_2$ of
$\Delta$ are
$i$--adjacent (written
$C_1\sim_i C_2$) whenever  the intersection $C_1\cap C_2$ contains a
vertex $v$ with
$t(v)=\{i\!\}$.

We then define for any subset $J\subset I$ the equivalence relation
$\sim _J$ on $\mathcal C$ as the equivalence relation generated by 
$\sim_j$, for $j\in J$ (by convention $\sim_{\emptyset}$ is the equivalence
relation whose orbits are the singletons). We define \textit{the extended
$J$--residue of a chamber
$C$} to be the union of the chambers $C'$ for which $C'\sim_J C$. We will
denote this subcomplex by $\bar{R}(J,C)$ (or sometimes
$\bar{R}_{\Delta}(J,C)$). For example $\bar{R}(\emptyset,C) =C$.

Now we define \textit{the $J$--residue of a chamber $C$\/} as the union of
cubes $Q$ of $\bar{R}(J,C)$ such that $t(Q)\subset J$ ; we denote it
by ${R}(J,C)$ (or sometimes ${R}_{\Delta}(J,C)$). For example, 
${R}(\emptyset,C)=\{v\}$ where $v$ is the center of the chamber $C$.

 For any residue $R$ of $\Delta$ we will denote by ${\mathcal C}(R)$ the
set of chambers of $\Delta$ whose center is contained in $R$.

Clearly for
any $g\in{\rm Aut}_0(\Delta)$ each $i$--adjacency is  invariant under $g$.
Thus for any subset $J\subset  I$ the relation $\sim_J$ is also invariant
under $g$, and for any chamber $C$ we have $gR(J,C)=R(J,gC)$. For example
we have $R(J,C_{\gamma})=\gamma R(J,C_*)$.

Note that extended residues are unions of chambers, while for $J\neq I$,
no $J$--residue contains a chamber.
\end{defn}

\begin{figure}[ht!]
\labellist
\tiny\hair 2pt
\pinlabel $\{3,4\}$ at 0 246
\pinlabel $4$ at -5 201
\pinlabel $\{4,5\}$ at 0 147
\pinlabel $\{4,5\}$ at -3 93
\pinlabel $4$ at -5 45
\pinlabel $\{3,4\}$ at -3 -3
\pinlabel $3$ at 57 249
\pinlabel $C$ at 51 205
\pinlabel $5$ at 36 135
\pinlabel $5$ at 39 111
\pinlabel $3$ at 52 -4
\pinlabel $\{2,3\}$ at 96 250
\pinlabel $\emptyset$ at 85 197
\pinlabel $\{5,6\}$ [bl] at 61 125
\pinlabel $\emptyset$ at 85 50
\pinlabel $\{2,3\}$ at 92 -4
\pinlabel $2$ at 132 226
\pinlabel $6$ at 120 113
\pinlabel $2$ at 132 20
\pinlabel $\{1,2\}$ at 159 213 
\pinlabel $1$ at 169 151
\pinlabel $\{1,6\}$ at 136 133 
\pinlabel $1$ at 169 93
\pinlabel $\{1,2\}$ at 159 29 
\pinlabel $2$ at 195 229
\pinlabel $6$ at 200 110
\pinlabel $2$ at 193 16
\pinlabel $\{2,3\}$ at 230 251
\pinlabel $\emptyset$ at 235 199
\pinlabel $\{5,6\}$ [br] at 252 128
\pinlabel $\emptyset$ at 233 50
\pinlabel $\{2,3\}$ at 228 -6
\pinlabel $3$ at 274 249
\pinlabel $5$ at 280 135
\pinlabel $5$ at 284 111
\pinlabel $3$ at 274 -3
\pinlabel $\{3,4\}$ at 327 246
\pinlabel $4$ at 323 206
\pinlabel $\{4,5\}$ at 330 163
\pinlabel $\{4,5\}$ at 329 89
\pinlabel $4$ at 323 45
\pinlabel $\{3,4\}$ at 326 0
\pinlabel $\{3,4\}$ at 388 245
\pinlabel $4$ at 378 205
\pinlabel $\{4,5\}$ at 385 147
\pinlabel $\{4,5\}$ at 385 91
\pinlabel $4$ at 378 44
\pinlabel $\{3,4\}$ at 385 -5
\pinlabel $3$ at 445 248
\pinlabel $C$ at 440 203
\pinlabel $5$ at 424 133
\pinlabel $5$ at 427 110
\pinlabel $3$ at 444 -4
\pinlabel $\{2,3\}$ at 484 249
\pinlabel $\emptyset$ at 471 196
\pinlabel $\{5,6\}$ [bl] at 447 124
\pinlabel $\emptyset$ at 471 47
\pinlabel $\{2,3\}$ at 478 -5
\pinlabel $2$ at 520 227
\pinlabel $6$ at 506 111
\pinlabel $2$ at 520 17
\pinlabel $\{1,2\}$ at 545 212
\pinlabel $1$ at 558 151
\pinlabel $\{1,6\}$ at 522 131
\pinlabel $1$ at 559 92
\pinlabel $\{1,2\}$ at 545 28
\pinlabel $2$ at 583 226
\pinlabel $6$ at 586 109
\pinlabel $2$ at 581 15
\pinlabel $\{2,3\}$ at 618 250
\pinlabel $\emptyset$ at 622 192
\pinlabel $\{5,6\}$ [br] at 640 125
\pinlabel $\emptyset$ at 620 49
\pinlabel $\{2,3\}$ at 616 -7
\pinlabel $3$ at 661 248
\pinlabel $5$ at 672 139
\pinlabel $5$ at 667 110
\pinlabel $3$ at 662 -4
\pinlabel $\{3,4\}$ at 715 245
\pinlabel $4$ at 711 205
\pinlabel $\{4,5\}$ at 716 162
\pinlabel $\{4,5\}$ at 717 88
\pinlabel $4$ at 711 44
\pinlabel $\{3,4\}$ at 716 -1
\endlabellist
\centering
\includegraphics[width=12cm]{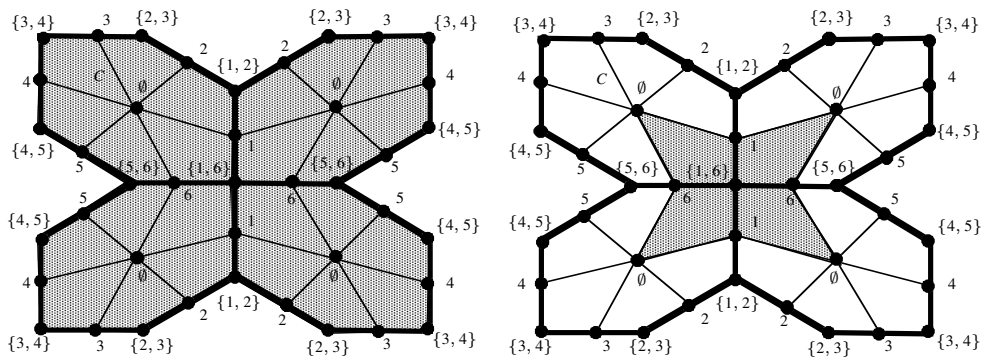}
\caption{Left: The extended $\{1,6\}$-residue
of a chamber C. Right: The $\{1,6\}$-residue
of the same chamber. In both cases the building is Fuchsian: it corresponds to the circular
graph on $I=\{1,2,3,4,5,6\}$ when the groups $G_{i}$ are of order two.}
\end{figure}

\begin{lem}\label{lem:definedtyp}
Let $i$ denote some element of  $I$ and
let $C$ and $C'$ be any pair of distinct
$i$--adjacent chambers. Then either $C=C'$ or $C\cap C'$ contains a single
rank--1 vertex.
\end{lem}

\begin{proof}
It is enough to argue when $C=C_*$. By \fullref{lem:stabtyp}, $C\cap C'$
contains a single vertex $w$ of type $\{i\!\}$. Assume that $C\cap C'$
contains a vertex $w'$ of type
$\{j\}$ with
$j\neq i$.

Then by \fullref{lem:stabtyp}, there are $\gamma\in\Gamma_i$ and
$\gamma'\in\Gamma_j$ such that $C'=\gamma C$ and $C'=\gamma' C$. Since
$\Gamma$ acts simply transitively on chambers we have $\gamma=\gamma'$.

So the lemma will follow if we prove that $\Gamma_i\cap\Gamma_j=\{1\}$
whenever $i\neq j$. But this relation is true when the set of vertices of $\mathcal G$ is
$\{i,j\!\}$, hence it is true in general because $\Gamma_{\{i,j\!\}}$ embeds
in $\Gamma$ by \fullref{lem:naturality}.
\end{proof}

\begin{lem}\label{lem:reschamber}
 Let $R=R(J,C)$ denote some residue of $\Delta$. Let $C'$ denote a
chamber  of center $w'$. Then $w'\in R$ if and only if $C\sim_J C'$.
Equivalently $C'\in {\mathcal C}(R)\iff C'\sim_J C$.
\end{lem}

\begin{proof}
 The ``if'' part follows by definition. Conversely assume $w'\in R$.
Then $w'$ lies in some chamber $C''$ such that $C''\sim_J C$. By
\fullref{lem:stabtyp}, $w'$ has to be the center of $C''$, hence
$C''=C'$ and we are done.
\end{proof}

 \begin{defn}[\rm Galleries]\label{defn:gal} \textit{A gallery of $\Delta$ of
length $n$ joining $C$ and $C'$} is a sequence of chambers
$(C_0,C_1,\dots,C_n)$ such that $C_0=C$, $C_n=C'$ and for each integer
$1\le k\le n$ the chambers $C_{k-1}$ and $C_k$ are $i_k$--adjacent for
some $i_k\in I$. We say that $(C_0,C_1,\dots,C_n)$ is a $J$--gallery
whenever for each integer $1\le k\le n$ the chambers $C_{k-1}$ and $C_k$
are $j_k$--adjacent for some $j_k\in J$. A gallery $(C_0,C_1,\dots,C_n)$ is \textit{closed\/}
whenever $C_0=C_n$.

The \textit{product\/} of two galleries $G\cdot G'$ is defined by concatenation.

  \textit{The combinatorial distance
between two chambers} is the minimal length of a gallery joining them. A
gallery is said to be \textit{geodesic\/} whenever its length is the distance
between its endpoints.
\end{defn}

\begin{lem}[Residues are subbuildings]\label{lem:res}
 Let $J$ denote any subset of $I$. The natural maps $\Gamma({\mathcal
G}_J,(G_i)_{i\in J})\to \Gamma_J\subset \Gamma=\Gamma({\mathcal
G},(G_i)_{i\in I})$ and $C(N({\mathcal G}_J))\to C(N({\mathcal G}))$
induce an equivariant type-preserving combinatorial embedding 
 $\delta_J$ from $\Delta({\mathcal G}_J,(G_i)_{i\in J})$ to $\Delta({\mathcal
G},(G_i)_{i\in I})$ whose image is the residue $R(J,C_*)$.
\end{lem}

\begin{proof}
First, the inclusion ${\mathcal G}_J\subset {\mathcal G}$ yields  a
natural combinatorial inclusion of $N({\mathcal G}_J)$ into $N({\mathcal G})$.
Then there is an induced combinatorial map
$i_J\co C(N({\mathcal G}_J))\to C(N({\mathcal G}))$, by \fullref{lem:cubcone}. If two vertices $x$ and $y$
are identified under this map then the unique smallest cubes $Q_x$ and $Q_y$
containing $\{x,v\}$ and $\{y,v\}$ are mapped to the same cube. By injectivity
of the map $N({\mathcal G}_J)\to N({\mathcal G})$ this means that
$Q_x=Q_y$, whence $x=y$. Now the injectivity of $C(N({\mathcal G}_J))\to
C(N({\mathcal G}))$ on the set of vertices implies injectivity because
cubical cones are simple cube complexes.

 Note that the inclusion $C(N({\mathcal G}_J))\to C(N({\mathcal G}))$
preserves the typing maps.

 We already know that the natural morphism $\Gamma({\mathcal
G}_J,(G_i)_{i\in J})\to \Gamma_J$ is an isomorphism.

 Consider the map $\Gamma({\mathcal
G}_J,(G_i)_{i\in J})\times C(N({\mathcal G}_J))\to\Gamma\times
C(N({\mathcal G}))$. It is a type-preserv\-ing, equivariant inclusion, and it induces a type-preserving, equivariant
combinatorial map $\delta_J \co \Delta({\mathcal G}_J,(G_i)_{i\in J})
\to\Delta $. Now assume two pairs $(\gamma,p)$ and $(\gamma',p')$ of
$\Gamma({\mathcal G}_J,(G_i)_{i\in J})\times C(N({\mathcal G}_J))$ are identified in $\Delta$. This means  that $i_J(p)=i_J(p')$ and that $\gamma^{-1}\gamma'\in \Gamma_{\underline t(i_J(p))}$. Using injectivity of the natural maps we see that in fact $(\gamma,p)$
and $(\gamma',p')$ are identified in $\Delta({\mathcal G}_J,(G_i)_{i\in J})$.

 It remains to show that the image of $\delta_J$ is $R(J,C_*)$.  First,
the image of the base chamber $C_*^J$ of $\Delta({\mathcal
G}_J,(G_i)_{i\in J})$ is the union of cubes $Q$ of $C_*$ containing the
center $v_*$ of $C_*$ and such that  (either $Q=\{v_*\}$ or) ${\rm link}(v_*,Q)$ is  a simplex of $N({\mathcal G}_J)$. The last condition
means that the types of the vertices of $Q$ are simplices of $N$ whose
vertices are in $J$, thus $t(Q)\subset J$. So we have shown that  $C_*^J$
maps into $C_*\cap R(J,C_*)$.

 In fact any cube $Q$ of $C_*\cap R(J,C_*)$ corresponds to the image of a
cube of $C_*^J$.  For $t(Q)\subset J$ and $t(Q)$ is a simplex of
$N({\mathcal G})$ since its vertices are pairwise adjacent in ${\mathcal G}$.
So the vertices of $t(Q)$ are pairwise adjacent in ${\mathcal G}_J$, and
$t(Q)\in N({\mathcal G}_J)$. This implies that $Q\in\delta_J(C_*^J)$.

 Let us check that $R(J,C_*)$ is invariant under $\Gamma_J$. As
$\Gamma_J$ is generated by $\Gamma_j=G_j$ for $j\in J$, it enough to prove
that $R(J,C_*)$ is invariant under $\Gamma_j, j\in J$. Now for
$\gamma\in\Gamma_j$ we have $\gamma \bar R(J,C_*)=\bar R(J,\gamma C_*)$, and $\gamma C_*\cap C_*$ contains the unique vertex of
$C_*$ of type $\{j\}$. Thus $\gamma C_*\sim_j C_*$ and $\gamma \bar R(J,C_*)=\bar R(J,C_*)$. It follows that $\gamma  R(J,C_*)=
R(J,C_*)$.

At this stage  we know the following: $C_*^J$ is mapped into $R(J,C_*)$, the
building $\Delta({\mathcal G}_J,(G_i)_{i\in J})$ is the union of the
$\Gamma({\mathcal G}_J,(G_i)_{i\in J})$--translates of $C_*^J$, the
combinatorial map $\delta_J$ is equivariant, and $R(J,C_*)$ is invariant
under $\Gamma_J$. This implies that the image of $\delta_J$ is contained
in $R(J,C_*)$. It remains to prove the reverse inclusion.

 Let $C$ denote any chamber  such that $C\sim_JC_*$. So there is a
sequence of elements $\gamma_1,\dots,\gamma_n$ of $\Gamma$ such that
$(C_*,C_{\gamma_1},\dots,C_{\gamma_n})$ is a $J$--gallery and
$C_{\gamma_n}=C$. By induction on $n$ we may assume that
$\gamma_i\in\Gamma_J$ for each $i<n$. Now by definition $C_{\gamma_n}$ is
$j$--adjacent to $C_{\gamma_{n-1}}$ for some $j\in J$. Applying
$\gamma_{n-1}^{-1}$ shows that $C_{\gamma_{n-1}^{-1}\gamma_n}$ is
$j$--adjacent to $C_*$. 

So $\gamma_{n-1}^{-1}\gamma_nC_*\cap C_*$ contains
the unique vertex $v_*^j$ of $C_*$ such that  $t(v_*^j)=j$. As $C_*$ is a
strict fundamental domain this implies $\gamma_{n-1}^{-1}\gamma_n
v_*^j=v_*^j$. By \fullref{lem:stabtyp}, we then have 
$\gamma_{n-1}^{-1}\gamma_n \in \Gamma_j$, and thus $\gamma_n\in \Gamma_J$.

This implies that $\bar R(J,C_*)=\Gamma_J C_*$.  
Thus  $ R(J,C_*)=\Gamma_J (C_*\cap R(J,C_*))$. As we have already seen,
$C_*\cap R(J,C_*)=\delta_J(C_*^J)$. We deduce that the image of
$\delta_J$ is the full residue $R(J,C_*)$.
\end{proof}

 Each residue $R$ of $\Delta$ is isomorphic to a right-angled building,
thus we will denote by ${\rm Aut}_0(R)$ the group of those combinatorial
automorphisms of the cube complex $R$ preserving the restriction of the
typing map $t$ on the set of vertices of $R$.

\begin{cor}\label{cor:stabres}
Let $\gamma\in\Gamma$ and let $J\subset I$.
The chambers $C$ such that $C\sim_J C_{\gamma}$ are the chambers
$C_{\gamma'}$ with $\gamma'\in\gamma\Gamma_J$.
 The stabilizer of $R(J,C_{\gamma})$ in $\Gamma$ is the conjugate $\gamma
\Gamma_J\gamma^{-1}$. It acts simply transitively on  ${\mathcal
C}(R(J,C_{\gamma}))$.
\end{cor}

\begin{proof}
By transitivity of $\Gamma$ on the set of chambers of $\Delta$ it
suffices to argue when $\gamma=1$.

By \fullref{lem:stabtyp},  the graph product $\Gamma({\mathcal
G}_J,(G_i)_{i\in J})$ acts simply transitively on the rank--0 vertices of 
$\Delta({\mathcal G}_J,(G_i)_{i\in J})$. By \fullref{lem:res},  we deduce that $\Gamma_J$ preserves $R(J,C_*)$ and
acts simply transitively on the rank--0 vertices of $R(J,C_*)$.

Let $C$ denote a chamber with center $\gamma'v_*$. Then by
\fullref{lem:reschamber} we have
$C\sim_J C_*$ if and only if $\gamma'v_*\in R(J,C_*)$.
 This is equivalent to the fact that there exists a
$\gamma''\in\Gamma_J$ such that $\gamma' v_*=\gamma'' v_*$. Applying
\fullref{lem:stabtyp} now in $\Delta$, we see that
$\gamma'v_*\in R(J,C_*)$ is equivalent to $\gamma'\in \Gamma_J$.

This proves that the chambers $C$ such that $C\sim_J C_*$ are the chambers
$C_{\gamma'}$ with $\gamma'\in\Gamma_J$. The rest of the corollary
follows easily.
\end{proof}

\subsection{Local structure and simple connectedness of the building}

\begin{defn}[\rm Lower link]
For any vertex $v$ of $\Delta$ let $\underline{\rm link}(v,\Delta)$ denote the
subcomplex of ${\rm link}(v,\Delta)$ whose simplices correspond to cubes
$Q$ containing $v$ and such that $t(Q)\subset t(v)$. We call this
subcomplex of the link \textit{the lower link of $v$\/}.
\end{defn}

\begin{lem}\label{lem:lowerlink}
Let $v$ denote any vertex of $\Delta$. Then
$\underline{\rm link}(v,\Delta)$  is a thickened octahedron. Each
maximal simplex of the lower link is a strict fundamental domain for the
action of the stabilizer of $v$ in $\Gamma$.
\end{lem}

\begin{proof}
As usual we may assume that  $v=v_*$. Let $J\in\wbar N$ denote the
type of $v$. Then $\underline{\rm
link}(v,\Delta)={\rm link}(v,R(J,C_*))$. Now the base chamber is
always a strict fundamental domain for the action of the graph product.  Thus the cube $Q_v=C_*\cap R(J,C_*)$ is a strict fundamental domain for
the action of $\Gamma_J$, and the simplex ${\rm link}(v,Q_v)$ is a strict
fundamental domain for the action of $\Gamma_J$ on $\underline{\rm
link}(v,\Delta)$. Now \fullref{exmp:thickocta} concludes the proof.
\end{proof}

\begin{lem}[Links in $\Delta$]\label{lem:linksdelta}
Let $v$ denote a vertex of $C_*$ of type $t(v)=J\subset \wbar N$. Let
$\overline{\rm link}(v,\Delta)\subset {\rm link}(v,\Delta)$ denote the union of simplices $\sigma$ corresponding to cubes $Q$  such
that the vertex $v\in Q\subset C_*$, the intersection $t(Q)\cap J$ is empty and the union $t(Q)
 J$ is a simplex of $N$. (In
other words, $\overline{\rm link}(v,\Delta)$ corresponds to the full subcomplex of $N$ on vertices which are joinable in $\mathcal G$ to each
vertex of $J$.)
Then the link of $v$ in $\Delta$  is the join of $\overline{\rm
link}(v,\Delta)$ with
$\underline{\rm link}(v,\Delta)$.
\end{lem}

\begin{proof}
 By \fullref{lem:stabtyp}, the group $\Gamma_J$ acts simply transitively on
the set ${\mathcal C}(v)$  of chambers containing $v$.
We have ${\rm link }
(v,\Delta)=\bigcup_{C\in{\mathcal C}(v)}{\rm link } (v,C)$ by \fullref{cor:chamberneighbor}. Observe that
$\Gamma_J$ fixes pointwise $\overline{\rm link}(v,\Delta)$. Thus the
result follows because by \fullref{lem:linkscone}, we have already  ${\rm
link}(v,C_*)=\overline{\rm link}(v,C_*)*\underline{\rm link}(v,C_*)$.
\end{proof}

\begin{prop}\label{prop:cat0} The building $\Delta$ is a $\CAT(0)$ cube
complex.  Residues of $\Delta$ are convex subcomplexes.
\end{prop}
The fact that $\Delta$ is $\CAT(0)$ is proved by Davis \cite{Davis98} and
Meier \cite{Meier96}.
\begin{proof}
By \fullref{lem:linksdelta}, we have a description of the link of each
vertex $v$ as the join of two simplicial complexes $\underline{\rm
link}(v,\Delta)$ and $\overline{\rm link}(v,\Delta)$. By
\fullref{lem:lowerlink},
$\underline{\rm link}(v,\Delta)$ is a thickened octahedron, hence is
a flag.  By construction $\overline{\rm link}(v,\Delta)$ is the link of the
simplex $t(v)$ in the flag complex $N$, thus it is a flag. Consequently
each vertex link is a flag and $\Delta$ is nonpositively curved.

Let $\tilde p\co \widetilde \Delta\to \Delta$ denote the universal cover.
This induces a typing map and a notion of rank on the set of vertices of
$\widetilde \Delta $. Let
$\widetilde {C_*}$ denote some connected component of
$p^{-1}(C_*)$. Since $C_*$ is a cone it is contractible, hence
$p\co \widetilde {C_*}\to C_*$ is an isomorphism.

Let $\widetilde \Gamma$ denote the group of automorphisms $\tilde \gamma$
of $\widetilde \Delta$ such that there exists $\gamma\in \Gamma$
satisfying $p\circ \tilde\gamma=\gamma\circ p$. The map
$\varphi:\tilde\gamma\mapsto \gamma$ is a morphism onto $\Gamma$; the
kernel is the fundamental group of $\Delta$.

The subgroup $\Gamma_i=G_i$ is the stabilizer of the unique vertex
$v_*^i$ of $C_*$ of type $\{i\!\}$. Let us denote by
$\widetilde{\Gamma_i}$ the set of lifts of elements of
$\Gamma_i$ fixing the unique vertex of $\widetilde{C_*}$ projecting to
$v_*^i$. Then  $\widetilde{\Gamma_i}$ is a subgroup of
$\widetilde \Gamma$ for each $i\in I$, and the restriction
$\varphi_i\co \widetilde{\Gamma_i}\to {\Gamma_i}$ is an isomorphism.

Assume that $i$ and $j$ are linked in $\mathcal G$. Let $\tilde v$ denote the
unique vertex of  $\widetilde{C_*}$ projecting to the vertex $v$ of $C_*$
such that $t(v)=\{i,j\!\}$. Then for any $\tilde
\gamma_i\in\widetilde{\Gamma_i}$ and $\tilde
\gamma_j\in\widetilde{\Gamma_j}$, the commutator $[\tilde
\gamma_i,\tilde
\gamma_j]$ maps to $1\in\Gamma$. Thus $[\tilde
\gamma_i,\tilde
\gamma_j]$ is in the deck transformation group. But it fixes $\tilde v$.
Thus $[\tilde
\gamma_i,\tilde
\gamma_j]=1$. We have proved that
$[\widetilde{\Gamma_i},\widetilde{\Gamma_j}]=1$.

For each rank--0 vertex $v$ of $\Delta$, there is a path
$(v_0=v_*,v_1,\dots,v_n=v)$ using only vertices of rank at most $1$. The lift
of this path at the center of $\widetilde {C_*}$ is contained in the
orbit of $\widetilde {C_*}$ under the subgroup generated by
$\{\widetilde{\Gamma_i}\}_{i\in I}$. The endpoint of this path is an arbitrary vertex
projecting onto a rank--0 vertex. This shows that $\widetilde{\Gamma}$ is 
generated by
$\{\widetilde{\Gamma_i}\}_{i\in I}$, because $\widetilde{\Gamma}$ acts freely on
rank--0 vertices of $\widetilde{\Delta}$.

Now the inverse isomorphisms 
${\varphi_i}^{-1}\co {\Gamma_i}\to\widetilde{\Gamma_i}$ define a morphism
$\psi\co \Gamma\to
\widetilde{\Gamma}$. We have $\psi\varphi=1$ on $\widetilde{\Gamma}$
because this is true on each $\widetilde{\Gamma_i}$, and these latter
subgroups generate $\widetilde{\Gamma}$.

Thus ${\rm Ker}\, \varphi=1$ and $\Delta$ is simply connected, therefore
$\CAT(0)$.

Now let $R$ denote some $J$--residue of $\Delta$, and let us prove that
$R$ is $\CAT(0)$--convex. We may assume that
$R=R(J,C_*)$.  Recall that  there is an injective combinatorial map
$i_J\co C(N({\mathcal G}_J))\to C(N({\mathcal G}))$ (see
\fullref{lem:res}). But in fact there is also a cellular map $r_J: 
C(N({\mathcal G}))\to C(N({\mathcal
G}_J))$ such that $r_J\circ i_J=1$: this is just the restriction to 
$C(N({\mathcal G}))\subset \ell^2(I)$ of the orthogonal
projection $\ell^2(I)\to \ell^2(J)$. Note that for any segment $[x,y]$
inside a cube of $C(N({\mathcal G}))$ the length  of $[r_J(x),r_J(y)]$ is
not larger than that of $[x,y]$.

Recall that we also have a retraction morphism  $\rho_J
\co \Gamma\to\Gamma({\mathcal G}_J,(G_i)_{i\in J})$, such that $\rho_J$
composed with the natural inclusion $\Gamma({\mathcal G}_J,(G_i)_{i\in
J})\to \Gamma_J$ is the identity.

We leave it to the reader to verify that
$(\rho_J,r_J)\co \Gamma\times C\to \Gamma({\mathcal
G}_J,(G_i)_{i\in J})\times C(N({\mathcal G}_J))$ induces a cellular map 
$\pi _J\co \Delta\to\Delta({\mathcal
G}_J,(G_i)_{i\in J})$ such that $\pi _J\circ \delta_J=1$. In fact $\pi
_J$ sends piecewise geodesics of length $L$ to piecewise geodesics of
length $\le L$.

Now take two points $p$ and $q$ of
$R=R(J,C_*)=\delta_J(\Delta({\mathcal
G}_J,(G_i)_{i\in J}))$, and consider the geodesic $g$ between them. It follows that $g'=\delta_J(\pi _J(g))$ is a
piecewise geodesic with endpoints $p$ and $q$ contained in $R$, and the length
of $g'$ is at most the length of $g$. Thus $g'=g$ and $R$ is convex.
\end{proof}

\begin{defn}\label{defn:homotopies} Two galleries $G=(C_0,\dots,C_n)$ and $G'=(C'_0,\dots,C'_m)$ differ by
an \textit{elementary (rank--$r$)-homotopy\/} whenever there exists integers $i, j,k$ such that
\begin{enumerate}
\item $(C_0,\dots,C_i)=(C'_0,\dots,C'_i)$
\item the centers of $C_i,\dots,C_j,C'_i,\dots,C'_k$ belong to some residue $R(J,C)$ with $\size{J}\le r$
\item $(C_j,\dots,C_n)=(C'_k,\dots,C'_m)$.
\end{enumerate}
The  \textit{(rank--$r$)-homotopy\/} is the equivalence relation  (on galleries) generated by elementary (rank--$r$)-homotopies.

A  \textit{lassoe\/} is a closed gallery $G$ of the form
$G'\cdot(C_0,C_1,$ $C_2,C_3,C_0)\cdot \overline{G'}$, where $G'$ denotes  any
gallery from the origin of $G$ to the chamber $C_0$, and  $C_0, C_1,C_2, C_3$ are pairwise distinct chambers in some  residue $R(J,C)$ with ${J}=\{i,j\!\}$ an edge of $\mathcal G$.
\end{defn}

\begin{lem}\label{lem:2simplyconnected}
Every closed gallery of $\Delta$ is (rank--1)--homotopic  to a product of
lassoes.
\end{lem}

\begin{proof}
 Let $G=(C_0,C_1,\dots,C_n)$ denote a closed gallery. Using transitivity
we may assume $C_0=C_n=C_*$. Let $\gamma_j\in\Gamma$ be such that
$C_j=\gamma_j C_*$. Set $g_j=\gamma_j^{-1}\gamma_{j+1}$. Since $C_{j-1}$
and $C_j$ are adjacent chambers, either $C_{j-1}=C_j$ in which case we
have $g_j=1$, or there exists a unique $i_j\in I$ such that $g_j\in
G_{i_j}$. Thus we obtain a sequence $g_1,\dots,g_n$ of $\bigcup_i G_i\subset
\Gamma$ such that $C_j=g_1\dots g_j C_*$ and $g_1\dots g_n=1$.

This means that in the free product $\ast_{i \in I}\, G_i$, the element $g_1\dots
g_n$ is a product of conjugate of commutators $[h_k,h_{\ell} ]$, with
$h_k\in G_k\setminus\{1\}$ and $h_{\ell}\in G_{\ell}\setminus\{1\},k\neq\ell$. Note that two products $a_1\dots
a_p$ and $b_1\dots b_q$ for $a_i,b_j \in \bigcup_k G_k$ are equal
in the free product $\ast\, G_k$ if and only if the corresponding
galleries $(C_*,a_1C_*,\dots,a_1\dots a_p C_*)$ and
$(C_*,b_1C_*,\dots,b_1\dots b_q C_*)$ are (rank--1)--homotopic.

 This shows that our initial gallery $G$ is (rank--1)--homotopic to a
product of lassoes, since the gallery with initial chamber $C_*$
corresponding to a conjugate of a commutator $[h_k,h_{\ell} ]$ as above  is a lassoe.
\end{proof}

\subsection{Product structure of certain buildings and residues}

\begin{lem}\label{lem:prodbuild}
 Let $\mathcal G$ denote a graph with vertex set $I$. Let $(G_i)_{i\in 
I}$ denote a family of groups. Assume that we are given a partition
$I=I_1\sqcup I_2$ in such a way that each $i_1\in I_1$ is adjacent to
each $i_2\in I_2$. Consider the three buildings 
$$\Delta=\Delta({\mathcal
G},(G_i)_{i\in  I}),\quad\Delta_1=\Delta({\mathcal G}_{I_1},(G_i)_{i\in 
I_1}),\quad \text{and} \quad \Delta_2=\Delta({\mathcal G}_{I_2},(G_i)_{i\in  I_2}).$$

 Then there exists an equivariant isomorphism of cube complexes $\delta:
\Delta_1\times \Delta_2\to \Delta$ such that
$\delta(p,v_*^{I_2})=\delta_{I_1}(p)$,
$\delta(v_*^{I_1},p)=\delta_{I_2}(p)$ and $t(\delta (p,q)) =
t_1(p)*t_2(q)$, where we denote by $\tau_1*\tau_2$ the simplex generated
by $\tau_1\cup \tau_2$. Such an isomorphism is unique.
\end{lem}

\begin{proof}
Denote by $\Gamma,\Gamma_1$ and $\Gamma_2$ the graph products
associated to the graphs ${\mathcal G},{\mathcal G}_{I_1}$ and ${\mathcal
G}_{I_2}$, respectively. By definition of graph product, there is a natural
isomorphism $\Gamma_1\times \Gamma_2\to \Gamma$ (on the generating groups
$G_{i_1}\times  \{1_{\Gamma_2}\}$ it forgets the second component and
acts similarly on the generating groups $ \{1_{\Gamma_1}\}\times
G_{i_2}$).

 Let us check now that the chamber $C_*$ is naturally the product
$C_*^{I_1}\times C_*^{I_2}$. To see this note that by assumption the
simplicial complex $N=N({\mathcal G})$ is the join of its subcomplexes
$N_1=N({\mathcal G}_{I_1})$ and $N_2=N({\mathcal G}_{I_2})$. By
\fullref{lem:joinproduct} there is a type-preserving isomorphism
$j\co C(N_1)\times C(N_2) \to C(N)$.

 Now combining the two isomorphisms we get an equivariant type-preserving
isomorphism $(\Gamma_1\times \Gamma_2)\times (C(N_1)\times C(N_2))\to 
\Gamma\times C(N)$. If we interpret the domain $(\Gamma_1\times \Gamma_2)\times
(C(N_1)\times C(N_2))$ as $(\Gamma_1\times C(N_1))\times (\Gamma_2\times
C(N_2))$ it is routine to check that the isomorphism induces an
equivariant type-preserving isomorphism $\delta: \Delta_1\times
\Delta_2\to \Delta$ sending the base point $(v_*^{I_1},v_*^{I_2})$ to
the base point  $v_*$.

 Two isomorphisms with the  properties above would differ by a
type-preserving automorphism $\varphi$ of $\Delta$ commuting with
$\Gamma$ and fixing  $v_*$.  We know that $\varphi(v_*)=v_*$,
$\varphi(C_*)=C_*$ and $\varphi$ preserves the type. But by
\fullref{lem:stabtyp} the typing map $t$ realizes a bijection from the
set of vertices of $C_*$ onto $\wbar N$. This implies that $\varphi$
is the identity on the whole chamber $C_*$. Now on any other chamber
$C_{\gamma}=\gamma C_*$ we have by the commutation assumption that
$\varphi\vert_{C_{\gamma}} = \gamma \varphi\vert_{C_*} \gamma^{-1}$, and
thus $\varphi\vert_{C_{\gamma}} ={\rm id}\vert_{C_{\gamma}}$. Uniqueness
follows.
\end{proof}

Since residues are subbuildings, by \fullref{lem:res} we obtain the
following:

 \begin{cor}[Product structure of
$\iperp$--residues]\label{cor:perpeqres} Fix $i\in I$ and a chamber
$C$ of $\Delta$ with center $w$. There exists a type-preserving
combinatorial isomorphism $\delta_{i,C}$ from $R(\{i\!\},C)\times R(i^\perp,C)$ to $
R(\iperp,C)$ which is equivariant under $\Gamma_i\times \Gamma_{i^\perp}\to
\Gamma_{\iperp}$ and sends $(w,w)$ to $w$ and both $(p,w)$ and $(w,p)$ to
$p$. Such an isomorphism is unique.
\end{cor}

\begin{figure}[ht!]
\labellist
\small\hair 2pt
\pinlabel $6$ at 302 453
\pinlabel $\emptyset$ at 368 455
\pinlabel $2$ at 411 453
\pinlabel $C$ at 330 445
\pinlabel $\{1,6\}$ [bl] at 293 413
\pinlabel $1$ at 366 425
\pinlabel $\{1,2\}$ [br] at 420 413
\pinlabel $6$ at 303 366
\pinlabel $\emptyset$ at 369 367 
\pinlabel $2$ at 438 370
\pinlabel $6$ at 306 280
\pinlabel $\emptyset$ at 374 280 
\pinlabel $2$ at 438 280
\pinlabel $6$ at 317 218
\pinlabel $\emptyset$ at 384 218
\pinlabel $2$ at 431 218
\pinlabel {$\text{the center of } C$} [l] at 391 244
\pinlabel $\{1,6\}$ [bl] at 308 175
\pinlabel $1$ at 383 187
\pinlabel $\{1,2\}$ [br] at 440 175
\pinlabel $\emptyset$ at 408 156
\pinlabel $\emptyset$ at 349 123
\pinlabel $\emptyset$ at 90 223
\pinlabel $\emptyset$ at 64 141
\pinlabel $\emptyset$ at 117 136
\pinlabel $1$ at 114 181
\endlabellist
\centering
\includegraphics[width=13cm]{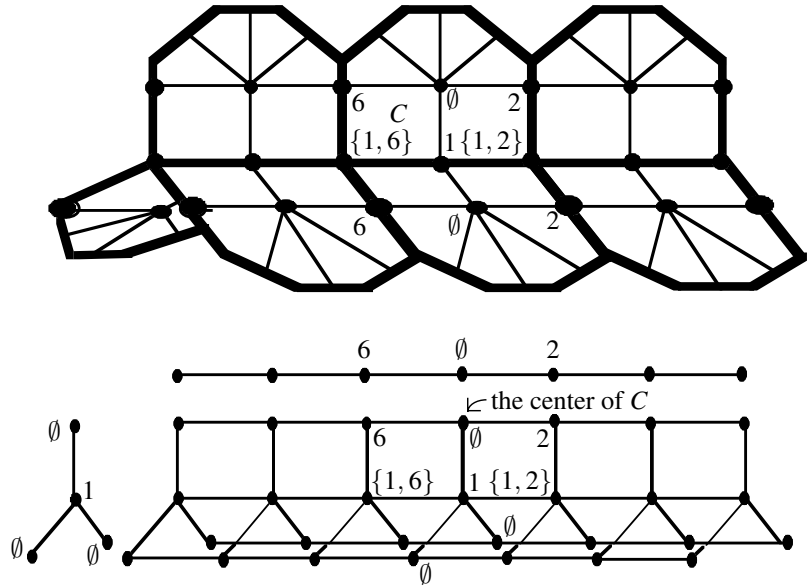}
\caption{The building is associated to a graph product of groups along a circular graph on $\{1,2,3,4,5,6\}$.
The group $G_{1}$ is of order $3$. The groups $G_2$ and $G_6$ are of order $2$. We have $1^{\perp} = \{2,6\}$ and
$1^{\perpeq} = \{1,2,6\}$. Above we show the extended $\{1,2,6\}$--residue of a chamber $C$. Below we show
the corresponding $\{1,2,6\}$--residue, product of the (finite) $\{1\}$--residue and of the $\{2,6\}$--residue, a subdivided line in this case.}
\end{figure}

\section{Holonomy of a group of type-preserving automorphisms}\label{sec:holonomy}

\begin{defn}[\rm $i$--boundary of residues]\label{defn:boundaryres}
 Let $R$ denote a residue. The \textit{$i$--boundary of $R$\/}, denoted by
$\partial _iR$, is the union of cubes $Q$ of $R$ such that $i\not\in
t(Q)$.
\end{defn}

\begin{exmp}\label{exmp:ibound}
 Let $i$ denote some element of $I$. Let us determine the $\{i\!\}$--residue
of $C_*$. By \fullref{cor:stabres}, we have $C_{\gamma}\sim_{\{i\!\}}
C_*\iff
\gamma\in \Gamma_i$. Thus $\bar R(\{i\!\},C_*)$ is the union of the
chambers $\gamma C_*$ for $\gamma\in\Gamma_i$, and  $R(\{i\!\},C_*)$ is the
union of all edges $\gamma [v_*,v_*^i]$, for $\gamma\in \Gamma_i$, where
$v_*^i$ denotes the unique vertex of $C_*$ with type $\{i\!\}$. So
topologically  $R(\{i\!\},C_*)$ is a cone with vertex $v_*^i$ 
The boundary $\partial _iR(\{i\!\},C_*)$ is just the base of the cone, that is, the
set of $q_i$ rank--0 vertices contained in $R(\{i\!\},C_*)$.

In this example, we observe a general fact: the stabilizer of a residue
$R$ in ${\rm Aut}_0(\Delta)$ acts on the $i$--boundary of $R$.
\end{exmp}

\begin{lem}\label{lem:iboundrank2}
Let $i,j$ denote distinct adjacent vertices of $\mathcal G$. Let $R$
denote the $\{i,j\!\}$--residue of some chamber $C$. Then the center of a
chamber $C'$ of $R$ is in the same connected component of $\partial_iR$
 as the center of $C$ if and only if
$C$ and $C'$ are
$j$--adjacent.
\end{lem}

\begin{proof}
The ``if'' part is obvious. Conversely let $w,w'$ denote the centers of
$C$ and $C'$ and assume that there exists a sequence
$(v_0=w,v_1,\cdots,v_n=w')$ of vertices such that each $\{v_i,v_{i+1}\}$
is contained in an edge of $\partial_iR$. Necessarily
$t(v_k)\subset \{j\}$. But by
\fullref{lem:stabtyp},  each rank--0 vertex is adjacent to a unique
vertex of type $\{j\}$ and to no other rank--0 vertex. Thus in the family
$\{v_0,v_1,\cdots,v_n\}$ there is at most one vertex of type $\{j\}$, and
all other vertices are of rank 0. We deduce that $w'$ and $w$ are
$j$--adjacent.
\end{proof}

We may reformulate the lemma above in the following way: the connected
components of $\partial_iR$ are the $\{j\}$--residues contained in $R$.

\begin{lem}\label{lem:iboundperpeq}
Let $C$ be a chamber and $i$ an element of $I$. Then 
$\delta_{i,C}$ induces a type-preserving
$\Gamma_{\iperp}$--equivariant isomorphism 
$\partial_i R(\{i\!\},C)\times R(i^\perp,C)\to \partial_i R(\iperp,C)$.
The inclusion $\partial_i R(\{i\!\},C)\to \partial_i R(\iperp,C)$
induces a bijective map from $\partial_i R(\{i\!\},C)$ to $\pi _0(\partial_i
R(\iperp,C))$. Composing this with the identification ${\mathcal
C}(R(\{i\!\},C))\to \partial_i R(\{i\!\},C)$, we get a bijection ${\mathcal
C}(R(\{i\!\},C))\to \pi _0(\partial_i R(\iperp,C))$.
\end{lem}

\begin{proof}
The inclusion $R(\{i\!\},C)\subset R(\iperp,C)$ leads the
injective map from $\partial_i R(\{i\!\},C)$ to $\partial_i R(\iperp,C)$.
Remember from \fullref{exmp:ibound} that $\partial_i R(\{i\!\},C)$ consists of isolated rank--0
vertices, so the injection induces a map $\partial_i
R(\{i\!\},C)\to \pi _0(\partial_i R(\iperp,C))$, and we have to check
that this map is a bijection.

By \fullref{cor:perpeqres}, we have an isomorphism $\delta_{i,C}
\co R(\{i\!\},C)\times R(i^\perp,C)\to R(\iperp,C)$ sending a vertex
$(v_1,v_2)$ to a vertex of type
$t(v_1)*t(v_2)$.
In other words $\partial_i R(\iperp,C)$ is the image under $\delta_{i,C}$ of
the cubes $\{w'\}\times Q$, where $w'$ is a rank--0 vertex of $R(\{i\!\},C)$
and $Q$ is any cube of $ R(i^\perp,C)$ (recall that $i\not\in i^\perp$).
Thus $\delta_{i,C}$ induces an isomorphism 
$\partial_i R(\{i\!\},C)\times R(i^\perp,C)\to \partial_i R(\iperp,C)$ which is  type-preserving.

Residues are connected, hence sending $w'$ to $(w',w)$ identifies $\pi
_0(\partial_i R(\{i\!\},C))$ with $\pi _0(\partial_i
R(\{i\!\},C)\times R(i^\perp,C))$, where $w$ denotes the center of the
chamber $C$. Composing with
$\delta_{i,C}$ yields the desired bijection.
\end{proof}

\begin{cor}\label{cor:iboundrank2}
Let $i$ and $j$ denote distinct adjacent vertices of $\mathcal G$. Let $R$
denote the $\{i,j\!\}$--residue of some chamber $C$. Then the center of a
chamber $C'$ of $R$ is in the same connected component of
$\partial_iR(\iperp,C)$
 as the center of $C$ if and only if
$C$ and $C'$ are
$j$--adjacent.
\end{cor}

\begin{proof}
It is enough to prove that the
inclusion $\partial_iR(\{i,j\!\},C)\subset \partial_iR(\iperp,C)$ induces a bijection
in $\pi _0$ by \fullref{lem:iboundrank2}.

Now each inclusion $R(\{i\!\},C)\to\pi_0(\partial_iR(\{i,j\!\},C))$ and
$R(\{i\!\},C)\to\pi_0(\partial_iR(\iperp,C))$ is bijective by
\fullref{lem:iboundperpeq}.  To deduce the first bijectivity remember
that  $R(\{i,j\!\},C)$ is itself a right-angled building, in which $i\perpeq=\{i,j\!\}$.
\end{proof}

 Let $G$ denote any subgroup of ${\rm Aut}_0(\Delta)$. For any subcomplex
$Y\subset \Delta$ we denote by $G_Y$ the stabilizer of $Y$ in $G$, that
is, the set of $g\in G$ such that $gY=Y$.

\begin{defn}\label{defn:holbuilding}
 Let $G\subset {\rm Aut}_0(\Delta)$ denote a subgroup. Let $i$ denote some
element of   $I$ and let $R$ denote some $\iperp$--residue of
$\Delta$. \textit{The
$i$--holonomy of
$G$ at $R$} is the morphism $G_{R}\to {\mathfrak
S}(\pi _0(\partial_iR))$.

 We say that \textit{the group $G$ has no holonomy\/} if for each $i\in I$ and
each  $\iperp$--residue $R$, the
$i$--holonomy of
$G$ at $R$ is  trivial.

 Observe that if $G_i$ is finite of cardinality $q_i$ then  ${\mathfrak
S}(\pi _0(\partial_iR))$ is the finite group ${\mathfrak S}(q_i) $. This
is because for any chamber $C$ whose center is inside $R$ we know by
\fullref{lem:iboundperpeq} that we have an identification ${\mathcal
C}(R(\{i\!\},C))\to \pi _0(\partial _iR)$, and we have already seen that ${\mathcal
C}(R(\{i\!\},C))$ admits a simply transitive action of a conjugate of
$\Gamma_i=G_i$.
\end{defn}

\begin{exmp}\label{exmp:gamma0}
For each graph product $\Gamma=\Gamma({\mathcal G},(G_i)_{i\in I})$ we
define $\Gamma_0$ as the kernel of the surjective morphism
$\Gamma\to\prod_iG_i$ inducing the identity on each group $G_i$.

If  each group $G_i$ has cardinality $q_i$ then
$\Gamma_0$ is a subgroup of $\Gamma$ of index $\tsty \prod_iq_i$.

The group $\Gamma_0$ has no holonomy.
To check this let us first compute the $i$--holonomy of $\Gamma_0$ at
$C_*$. The stabilizer of $R(\iperp,C_*)$ in $\Gamma$ is
$\Gamma_{\iperp}$. Thus the stabilizer of $R(\iperp,C_*)$ in 
$\Gamma_0$ is the kernel of
$\Gamma_{\iperp}\mapsto \prod_{j\in \iperp}G_j$. In particular
this stabilizer is contained in $\Gamma_{i^{\perp}}$.

But by \fullref{lem:iboundperpeq}, it is clear that the action of
$\Gamma_{i^{\perp}}$ on $\pi _0(\partial_iR(\iperp,C_*))$ is
trivial. Thus the $i$--holonomy of $\Gamma_0$ at $C_*$ is trivial.

Now $\Gamma_0$ is normal in $\Gamma$ and $\Gamma$ is transitive on
chambers. It follows that the $i$--holonomy of $\Gamma_0$ at any
chamber is trivial.
\end{exmp}

\section{Atlases on a regular right-angled building}\label{sec:atlas}

\begin{defn}\label{defn:atlas}
\textit{An atlas $\mathcal A$ on $\Delta$\/} consists in the following data:
for each $i\in I$ and each $\iperp$--residue $R$ we are given a simply
transitive action of $G_i$ on the set $\pi _0(\partial_iR)$ of connected
components of the $i$--boundary of $R$. We will denote the corresponding
representation by ${\mathcal
A}_{i,R}\co G_i\to {\mathfrak S}(\pi _0(\partial_iR))$.

We say that \textit{two atlases ${\mathcal
A},{\mathcal
A}'$ are equivalent} (written ${\mathcal
A}\sim {\mathcal
A}'$) whenever for each $i,R$, there is an element $g_{i,R}\in G_i$ such
that ${\mathcal
A}'_{i,R}(g)={\mathcal
A}_{i,R}(g_{i,R}g{g_{i,R}}^{-1})$ for all $g\in G_i$. We denote by  $[
{\mathcal A}]$ the equivalence class of the atlas ${\mathcal A}$.

Assume $\varphi$ is a type-preserving automorphism of $\Delta$.  Then we may set ${\mathcal
A}'_{i,R}(g)$ equal to $\varphi\circ {\mathcal
A}_{i,\varphi^{-1}(R)}(g) \circ\varphi^{{-}1}$. We thus obtain a new atlas
${\mathcal A}'=\varphi_*({\mathcal A})$. This defines an action of ${\rm
Aut}_0(\Delta)$ on the set of all atlases on $\Delta$. Clearly this
action preserves equivalence of atlases. The stabilizer of the atlas
$\mathcal A$ in this action will be denoted by
${\rm Aut}_0(\Delta,{\mathcal A})$. Similarly we denote by  ${\rm
Aut}_0(\Delta,[{\mathcal A}])$ the stabilizer of the equivalence class
of an atlas $\mathcal A$.
\end{defn}

\begin{exmp}[The standard atlas on $\Delta({\mathcal G},(G_i)_{i\in
I})$]\label{exmp:standardatlas} We may use the group $\Gamma$ 
 to define  atlases on
$\Delta$.

\medskip \noindent (1)\qua More precisely for $i\in I$ and for some $\iperp$--residue
$R$, choose any chamber $C$ in $R$. By \fullref{cor:stabres} and
\fullref{lem:iboundperpeq}, the stabilizer
$\Gamma_{R(\{i\!\},C)}$ is conjugate to $\Gamma_i$ in $\Gamma$ and acts
simply transitively on
$\pi _0(\partial_iR)$. Using one of the possible conjugations we get a
simply transitive  action of
$\Gamma_i=G_i$ on $\pi _0(\partial_iR)$. This defines an atlas. Using the
other possible conjugations yields equivalent atlases, and in fact a
whole equivalence class of atlases. The atlases obtained that way we call
\textit{$\Gamma$--atlases\/}.

Now $\Gamma$ preserves the set of $\Gamma$--atlases. So if
${\mathcal A}$ is a $\Gamma$--atlas then $\Gamma\subset {\rm
Aut}_0(\Delta,[{\mathcal A}])$.

Note that if for all $i$ the group $G_i$ is abelian then the equivalence relation
is trivial, and there is only one $\Gamma$--atlas. In this case we have $\Gamma\subset {\rm
Aut}_0(\Delta,{\mathcal A})$.

\medskip \noindent (2)\qua In fact it is always possible to define more carefully a $\Gamma$--atlas ${\mathcal A}$
 in such a way that $\Gamma\subset {\rm
Aut}_0(\Delta,{\mathcal A})$.

 Consider an element $i\in I$. By \fullref{cor:stabres}, the action
of
$\Gamma_i=G_i$ on ${\mathcal C}(R(\{i\!\},C_*))$ is simply transitive. We
define ${G_i}^*$ as the subgroup of ${\mathfrak S}({\mathcal
C}(R(\{i\!\},C_*)))$ consisting in those permutations $\sigma$ which
commute with each element $g\in G_i$ (acting on ${\mathcal
C}(R(\{i\!\},C_*))$ through $\Gamma_i$). So if $\sigma\in {G_i}^*$ then for
any $g\in G_i$ we must have $\sigma(g C_*)=g\sigma(C_*)$. If we define an
element ${h_{\sigma}}\in G_i$ by the relation 
$\sigma(C_*)={h_{\sigma}}^{-1}C_*$ then we have $\sigma(g
C_*)=g{h_{\sigma}}^{-1}C_*$.

Abbreviating the pair $(i,R(\iperp,C_*))$ by $(i,C_*)$, let us now define for every $h\in  G_i$ a permutation 
${\mathcal
A}^\Gamma_{i,C_*} (h)$ of ${\mathcal C}(R(\{i\!\},C_*))$ by
the rule ${\mathcal A}^\Gamma_{i,C_*}
(h)(gC_*)=gh^{-1}C_*$. We then obtain a map ${\mathcal
A}^\Gamma_{i,C_*}\co G_i\to {\mathfrak S}({\mathcal
C}(R(\{i\!\},C_*)))$. It is easy to check that ${\mathcal
A}^\Gamma_{i,C_*}$ is an injective morphism with
simply transitive image. In fact the image of this morphism is exactly
the subgroup ${G_i}^*$.

 For each $\iperp$--residue  $R$ choose an element
$\gamma_R\in\Gamma$ such that $\gamma_R R(\iperp,C_*)=R$. Then
define for any $h\in G_i$ a permutation of $\pi_0(\partial _iR)$ by
${\mathcal A}^\Gamma_{i,R}(h) =\gamma_R\circ {\mathcal
A}^\Gamma_{i,C_*} (h)\circ {\gamma_R}^{-1}$. Doing this
for each $i$ yields an atlas ${\mathcal A}^\Gamma$, which we call \textit{the standard atlas\/}. Apparently this atlas ${\mathcal A}^\Gamma$ depends
on the choices of the $\gamma_R$, but the following argument shows that
it is not the case.

 In fact let us show that  ${\mathcal A}^\Gamma$ is invariant under
$\Gamma$. To see this choose $i\in I$, $R$ a $\iperp$--residue and
$\gamma\in \Gamma$. Set $R'=\gamma R$. Then we have
$$\gamma_*({\mathcal A}^\Gamma) _{i,R'}(h)=\gamma\circ  {\mathcal
A}^\Gamma  _{i,R}(h)\circ \gamma^{-1} = (\gamma\gamma_R)\circ {\mathcal
A}^\Gamma_{i,C_*}(h) \circ (\gamma\gamma_R)^{-1}.$$ 
So to
prove invariance it suffices to prove independence relative to the choice
of the element $\gamma_{R'}$.

 In fact all this reduces to checking that the action of $\gamma$ on
$\pi_0(\partial_iR(\iperp,C_*))$ commutes with the permutation group
${\mathcal A}^\Gamma_{i,C_*}(G_i)={G_i}^*$ if $\gamma
R(\iperp,C_*)=R(\iperp,C_*)$. By
\fullref{cor:stabres}, the product subgroup $\Gamma_{\{i\!\}}\times
\Gamma_{i^{\perp}}$ is the stabilizer of $R(\iperp,C_*) $ in
$\Gamma$. But $\Gamma_{i^{\perp}}$ acts trivially on 
$\pi_0(\partial_iR(\iperp,C_*))$, and ${G_i}^*$ was defined to
commute with $\Gamma_{\{i\!\}}$.
\end{exmp}

\begin{defn}[\rm ${\mathcal A}$--word of a gallery]\label{defn:galword}
Let ${\mathcal
A}$ denote some atlas on $\Delta$. First assume we are given a pair
$(C,C')$ of adjacent chambers: we want to associate to it a letter $s$ in
the alphabet ${\mathfrak A}=\{\emptyset\}\sqcup (\sqcup_i\{i\!\}\times
(G_i\setminus\{1\}))$. If
$C'=C$ we set
$s=\emptyset$. Else by \fullref{lem:definedtyp}, there is one and
only one $i$ in
$I$ such that $C$ and $C'$ are $i$--adjacent. Furthermore using the
identification ${\mathcal C}(R(\{i\!\},C))\to \pi _0(\partial_i
R(\iperp,C))$ of \fullref{lem:iboundperpeq} together with the
simply transitive representation ${\mathcal
A}_{i,R(\iperp,C)}\co G_i\to \pi _0(\partial_i
R(\iperp,C))$, we obtain a simply transitive action of $G_i$ on
${\mathcal C}(R(\{i\!\},C))$ (the corresponding morphism will be still denoted
by
${\mathcal A}_{i,R(\iperp,C)}$). So there exists a unique
$g_i\in G_i$ sending
$C$ to $C'$ (and $g_i\neq 1$ because $C'\neq C$). We then set $s=(i,g_i)$.

Now to any gallery $(C_0,C_1,\dots,C_n)$ we associate the
word $(s_1,\dots,s_n)$ in ${\mathfrak A}$ with $s_i$ the letter
associated as above to the length one gallery $(C_{i-1},C_i)$.
\end{defn}

\begin{lem}\label{lem:wordgal}
Let $(s_1,\dots,s_n)$ denote some word in ${\mathfrak A}$. Fix a chamber
$C$ and an atlas ${\mathcal
A}$. Then there exists  a unique gallery $(C_0,C_1,\dots,C_n)$ with
$C_0=C$ and whose associated ${\mathcal
A}$--word is $(s_1,\dots,s_n)$.
\end{lem}

\begin{proof}
Using induction on length, it is enough to handle the case $n=1$. If
$s_1=\emptyset$ then the ${\mathcal A}$--word associated to  $(C_0,C_1)$
is $s_1$ if and only if $C_1=C_0$. If $s_1=(i,g_i)$ then necessarily $C_1\in
R(\{i\!\},C_0)$ and in fact using the simply transitive action of $G_i$ on
${\mathcal C}(R(\{i\!\},C))$ described in \fullref{defn:galword} we must
have $C_1={\mathcal A}_{i,R(\iperp,C_0)}(g_i)C_0$. Conversely
the gallery $(C_0,{\mathcal A}_{i,R(\iperp,C_0)}(g_i)C_0)$ has the
desired property.
\end{proof}

\begin{prop}\label{prop:conjugatlas}
Let ${\mathcal
A},{\mathcal
A}'$ denote two atlases on $\Delta$. Let $C$ and $C'$ denote two chambers of
$\Delta$, and let $f\co C\to C'$ denote the unique type-preserving
combinatorial isomorphism between the two chambers. Then $f$ admits one
and only one extension $\fbar\in {\rm
Aut}_0(\Delta)$ such that $\fbar_*({\mathcal
A})={\mathcal
A}'$.
\end{prop}

\begin{proof}
We first give the proof of uniqueness, which then will lead to
existence.

To get uniqueness we show that if  a type-preserving extension is already defined
on some chamber, then it is uniquely defined on each adjacent chamber. In
order to give the argument, we introduce the notion of a germ.
\begin{defn}[\rm Germs]\label{defn:germ}
A \textit{germ\/} of ${\rm
Aut}_0(\Delta)$ is a type-preserving
combinatorial isomorphism between two chambers.
\end{defn}

 Remember that by 
\fullref{lem:stabtyp} the type $t$ realizes a bijection between the set
of vertices of a given chamber and $\wbar N$. So in this context the
pseudo-group of germs is the same as the pseudo-group of pairs of
chambers. Observe that the proposition we are proving just says that any
germ has a unique type-preserving extension sending a given atlas to an
other given atlas.

For any gallery $G=(C_0,C_1,\dots,C_n)$ with origin $C_0=C$ we define
a gallery $f(G)$ as follows: $f(G)$ is the unique gallery with origin
$C'$ and whose ${\mathcal A}'$--word is  the ${\mathcal
A}$--word of $G$ (see \fullref{defn:galword} and
\fullref{lem:wordgal}).

We then define a sequence of germs
$G(f)=(f_0,f_1,\dots,f_n)$ as follows. Write
$f(G)=(C'_0,C'_1,\dots,C'_n)$ and define $f_i$ as the unique germ sending
$C_i$ to $C'_i$.

We claim that we have
$f(G)=\fbar(G)$ for any type-preserving extension $\fbar$ of $f$ and
any gallery $G=(C_0,C_1,\dots,C_n)$ with origin $C_0=C$ , and hence the restriction of $\fbar$ to
$C_n$ is the germ $f_n$. Uniqueness follows.

To check the claim it is enough to handle the case when
$n=1$ and $C_1\neq C_0$ by induction. Write $s_1=(i,g_i)$. By assumption we have
$\fbar(C_0)=C'_0$, and hence
$\fbar (R(\iperp,C_0))=R(\iperp,C'_0)$. To simplify
notation set
$R=R(\iperp,C_0),R'=R(\iperp,C'_0)$. The condition
$\fbar_*({\mathcal A})={\mathcal
A}'$ says that ${\mathcal
A}'_{i,R'}=\fbar\circ {\mathcal
A}_{i,R}\circ\fibar$. Now we may determine the chamber
$\fbar(C_1)$. We have
$$\fbar(C_1)=\fbar({\mathcal
A}_{i,R}(g_i)C_0)=\fbar({\mathcal
A}_{i,R}(g_i)(\fibar(C'_0)))={\mathcal
A}'_{i,R'}(g_i)C'_0.$$ 
This latter chamber is $C'_1$ because the ${\mathcal
A}'$--word associated to $(C'_0,C'_1)$ is $(s_1)$.

We now
prove the existence of a type-preserving extension
$\fbar$ sending $\mathcal A$ to ${\mathcal A}'$, still using the
action of the germ $f$ on galleries with origin $C$.

For any gallery $G=(C_0,C_1,\dots,C_n)$ with $C_0=C$ and $C_n=D$ we
consider the germ
$\omega(G(f))=f_n$ (from $C_n$ to the last chamber $C'_n$ of $f(G)$). We
first prove that this germ is independent of the gallery $G$ with
endpoints $C,D$.

Observe the product rule $\omega(G_1G_2(f))=\omega(G_2(\omega(G_1(f))))$.
If we denote by $\overline G$ the inverse gallery of $G$ we also have
$\omega(G\overline G(f))=f$. This is because the letter associated to the
gallery $(C_1,C_0)$ is $s^{-1}$, where $s$ is the letter associated to
$(C_0,C_1)$, and the inverse of a letter $s$ is $\emptyset$ if
$s=\emptyset$ and $(i,{g_i}^{-1})$ if $s=(i,{g_i})$. In other words
$\omega(G(f))=\omega(G'(f))$ whenever $G$ and $G'$ are (rank--1)--homotopic 
(see \fullref{defn:homotopies}).

If $G,G'$ are galleries from $C$ to $D$ we  have $\omega(G\overline{G'}G'(f))=\omega(G'(\omega(G\overline{G'}(f))))$
 and also
$\omega(G\overline{G'}G'(f))=\omega(\overline{G'}G'(\omega(G(f))))=
\omega(G(f))$. So
to conclude that $\omega(G(f))=\omega(G'(f))$ it is enough to prove that
for every closed gallery $G$ with origin $C$ and any germ $f\co C\to \Delta$
we have $\omega(G(f))=f$.

By \fullref{lem:2simplyconnected}, every closed gallery of $\Delta$ is a
product of lassoes, up to rank--1 homotopies.
So it is enough to check that if $G=(C_0=C,C_1,C_2,C_3,C_4=C)$ then $\omega(G(f))=f$
where $C_0\sim_iC_1,C_0\neq C_1$, $C_1\sim_jC_2,C_1\neq C_2$, $C_2\sim_iC_3,C_2\neq C_3$, $C_3\sim_jC_4,C_3\neq C_4$, for distinct adjacent vertices $i$ and $j$ of $\mathcal G$.

The $\mathcal A$--word of $G$ is then 
$$(s_1,s_2,s_3,s_4) = ((i,g_i),(j,g_j),(i,{g'}_i),(j,{g'}_j).
$$ Note that the
four chambers belong to $R(\iperp,C)$. Since
$C_1$ and
$C_2$ are $j$--adjacent they define the same connected component in
$\partial_iR(\iperp,C)$ (see \fullref{lem:iboundrank2}). The
same is true for
$C_0,C_3$. We deduce that ${g'}_i={g_i}^{-1}$. Similarly we have
${g'}_j={g_j}^{-1}$.

Now let us prove that the gallery $f(G)$ is closed: this will conclude
because $\omega(G(f))$ will be the unique germ from $C_4=C_0$ to
$C'_4=C'_0$, so $\omega(G(f))=f$.

First we note that the centers of $C'_3$ and $C'_0$ define the same
connected component in $\partial_iR(\iperp,C'_0)$. Because so do
$C'_2$ and $C'_1$ (they are $j$--adjacent), and furthermore
${g'}_i={g_i}^{-1}$. This implies that $C'_3$ and $C'_0$ are
$j$--adjacent by \fullref{lem:iboundrank2}. Note that $C'_0\neq
C'_3$, else ${g'}_i={g_i}^{-1}$ would imply $C'_2=C'_1$ thus $g_j=1$. So
there is an element $h_j\in G_j\setminus\{1\}$ such that ${\mathcal
A}'_{j,R(j^{\perpeq},C'_3)}(h_j)C'_3=C'_0$. The same argument as above
shows that ${h}_j={g_j}^{-1}$. So in fact $h_j={g'}_j$ and $C'_4=C'_0$.

So for any chamber $D$ there is a germ $f_D\co D\to\Delta$ such that for any
gallery $G$ from $C$ to $D$, we have
$\omega(G(f))=f_D$. Let us check that all the germs $f_D$ fit
together and define  a
type-preserving combinatorial map $\fbar$. So let $Q$ denote any
cube of $\Delta$, and let $D,D'$ denote any two chambers that contain
$Q$. We must check that $f_D=f_{D'}$ on $Q$. Let $v$ denote the vertex of
$Q$ with type $J=\underline{t}(Q)$.  By
\fullref{lem:stabtyp} the group $\Gamma_v$ acts simply transitively on
the set ${\mathcal C}(v)$  of chambers containing $v$. Let
$\gamma\in\Gamma$ be such that $D'=\gamma D$, and let $g\in\Gamma$ be such that
$D=gC_*$. The conjugate $\gamma'=g^{-1}\gamma g$ belongs to $\Gamma_J$ by
\fullref{lem:stabtyp}. Consider any decomposition  $\gamma'=g_1\dots
g_n$ as a product of elements of $G_i,i\in J$: this yields a gallery
$(C_*,g_1C_*,\dots,\gamma'C_*)$ from $C_*$ to $g^{-1}D'$, all of whose
chambers contain $g^{-1}Q$. Applying $g$ yields a gallery from  $D$ to
$D'$, all of whose chambers contain $Q$. Thus we see that it suffices to
argue in the  case when $D,D'$ are $i$--adjacent for some $i\in J$. Let
$w$ denote the unique vertex in $D\cap D'$ of type $\{i\!\}$. There is a
unique smallest cube $\bar Q$ in $D$ containing $Q$ and $w$. By
\fullref{lem:linksdelta}, $\bar Q$ is also in $D'$.
 We
note that the image chambers
$E$ and $E'$ of the germs
$f_D$ and $f_{D'}$ respectively are $i$--adjacent, because $f_{D'}$ is
obtained by extending $f_D$ along the gallery $(D,D')$. Since $E\cap E'$
contains a unique vertex of type $\{i\!\}$, the germs
$f_D$ and
$f_{D'}$ are equal on $w$. In fact by
\fullref{lem:linksdelta}, the germs $f_D$ and
$f_{D'}$ are equal on $\bar Q$, hence on $Q$.

Now assume ${\mathcal A}''$
is a second atlas,  $C''$ is a second chamber and  $f'$ denotes the
germ sending $C'$ to $C''$. Then it is straightforward that
$(f'f)(G)=f'(f(G))$. This implies that the $i$--th germ of $G(f'f)$ is
    $G(f'f)_i=(f(G))(f')_i\circ G(f)_i$. So
$\overline{f'f}=\fpbar \fbar$. In particular we have
$\fibar \fbar={\rm id}_{\Delta}$. So  $\fbar$
is in fact a type-preserving automorphism extending $f$.

It remains to check that $\fbar_*({\mathcal A}')={\mathcal A}$.
So let $i\in I$ and let $R$ denote some $\iperp$--residue.  Fix a
chamber $D$ in ${\mathcal C}(R)$ and a gallery $G=(C_0=C,\dots,C_n=D)$.
Set $R'=\fbar(R)$. We must show that for every $g\in G_i$ we have
${\mathcal A}'_{i,R'}(g)\circ \fbar= \fbar\circ {\mathcal
A}_{i,R}(g)$ on $\pi_0(\partial_i R)$, or equivalently on ${\mathcal
C}(R(\{i\!\},D))$ using the identification ${\mathcal C}(R(\{i\!\},D))\to
\pi_0(\partial_i R)$ of \fullref{lem:iboundperpeq}. Since the relation
is obvious if $g=1$ we will assume $g\neq 1$.

Set $E={\mathcal A}_{i,R}(g)D$. Consider the gallery  $H=G\cdot
(D,E)=(C_0,\dots,C_n,E)$. Let $H'=(C'_0,\dots,C'_n,E')$ denote its image
under $\fbar$. By construction the last letter in the ${\mathcal
A}'$--word of $H'$ is $(i,g)$. By definition it means that $E'$ is
$i$--adjacent to $D'$ with $E'={\mathcal A}'_{i,R'}(g)D'$. So we have
checked the relations ${\mathcal A}'_{i,R'}(g)(\fbar(D))= \fbar({\mathcal A}_{i,R}(g)D)$ for every $g\in G_i$. This was done
without assumption on $D$.
\end{proof}

\begin{cor}\label{cor:conjugatlas}
If $\mathcal A$ is an  atlas on $\Delta$ then the automorphism
group ${\rm Aut}_0(\Delta,{\mathcal A})$ acts simply transitively on
chambers. For example ${\rm Aut}_0(\Delta,{\mathcal A}^\Gamma)=\Gamma$.
In general ${\rm Aut}_0(\Delta,{\mathcal A})$ is conjugate to $\Gamma$
inside 
${\rm Aut}_0(\Delta)$.
\end{cor}
\begin{proof}
 The first assertion follows from \fullref{prop:conjugatlas}
applied with ${\mathcal A}'={\mathcal A}$; for the case of ${\mathcal
A}^\Gamma$ see \fullref{exmp:standardatlas}. 
 The second assertion then follows from \fullref{prop:conjugatlas}
applied with ${\mathcal A}'={\mathcal A}^\Gamma$. 
\end{proof}

\begin{prop}\label{prop:noholatlas}
 Let $\Gamma'$ denote  a subgroup of ${\rm Aut}_0(\Delta)$ with no
holonomy. Then there exists an atlas $\mathcal A$ on $\Delta$ such that
$\Gamma\subset {\rm Aut}_0(\Delta,{\mathcal A})$.
\end{prop}
\begin{proof}
 Let $i\in I$ denote a fixed element. Choose a family $(R_t)_{t\in T}$ of
$\iperp$--residues such that every $\iperp$--residue is in the
$\Gamma'$--orbit of $R_t$ for one and only one $t$. Choose arbitrarily a 
simply transitive representation ${\mathcal
A}_{i,R_t}\co G_i\to {\mathfrak S}(\pi_0(\partial _iR_t))$ for each residue
$R_t$.

 Now for any $\iperp$--residue $R$ there is some $\gamma'\in\Gamma'$ and a unique $t\in T$ and
 such that $R=\gamma'R_t$. The simply transitive
representation  ${\mathcal A}_{i,R}\co G_i\to {\mathfrak S}(\pi_0(\partial
_iR))$ defined by ${\mathcal A}_{i,R}(g)=\gamma'\circ {\mathcal
A}_{i,R_t}\circ {\gamma'}^{-1}$ is in fact independent of the choice of
$\gamma'$. Indeed for another $\gamma''\in\Gamma'$ sending $R_t$ to $R$
we have $$\gamma'\circ {\mathcal A}_{i,R_t}\circ
{\gamma'}^{-1}=(\gamma'{\gamma''}^{-1})\circ \gamma''\circ {\mathcal
A}_{i,R_t}\circ {\gamma''}^{-1}\circ (\gamma'{\gamma''}^{-1})^{-1}.$$ But
$\Gamma'$ has no holonomy, so the element $\gamma'{\gamma''}^{-1}\in
\Gamma'_R$ acts trivially on $\pi_0(\partial_i R)$, and we are done.

 If we do this for each $i\in I$ we get an atlas $\mathcal A$ on
$\Delta$. We now check that this atlas is invariant under $\Gamma'$.

Let $i\in I$ be a vertex of $\mathcal G$, let $R$ denote some
$\iperp$--residue and let $\varphi$ belong to $\Gamma'$. Set
$R'=\varphi(R)$. For $g\in G_i$ we have
 $$\varphi \circ {\mathcal A}_{i,R}(g)\circ\varphi^{-1} =( \varphi 
\circ\gamma')\circ {\mathcal A}_{i,R_t}(g)\circ(\varphi\circ\gamma'
)^{-1}$$
  with $\gamma'\in\Gamma'$ such that $\gamma' R_t=R$. The element
$\varphi\circ\gamma' \in\Gamma'$ sends $R_t$ to $R'$, so by the
independence property above we have
$$
(\varphi \circ\gamma'
)\circ{\mathcal A}_{i,R_t}(g)\circ(\varphi\circ\gamma' )^{-1} ={\mathcal
A}_{i,R'}(g).\proved$$
\end{proof}

\begin{thm}\label{thm:noholcomm}
 Let $\Gamma'$ denote  a subgroup of ${\rm Aut}_0(\Delta)$  with a finite
number of $\Gamma'$--orbits on the set of chambers of $\Delta$. Assume
that $\Gamma'$ has a finite index subgroup  with no holonomy. Then
$\Gamma'$ is commensurable to $\Gamma$ in ${\rm Aut}_0(\Delta)$.
\end{thm}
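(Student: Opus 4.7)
The plan is to assemble the result directly from the two main ingredients proved in Section~\ref{sec:atlas}, namely Proposition~\ref{prop:noholatlas} (a group without holonomy preserves some atlas) and Corollary~\ref{cor:conjugatlas} (the stabilizer of an atlas acts simply transitively on chambers and is conjugate to $\Gamma$ in ${\rm Aut}_0(\Delta)$). The cocompactness hypothesis on $\Gamma'$ enters only at the very end, through a standard orbit--index identification.

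First I would pass to a finite index subgroup $\Gamma'' \subset \Gamma'$ with no holonomy, as provided by assumption. Applying Proposition~\ref{prop:noholatlas} to $\Gamma''$ yields an atlas $\mathcal{A}$ on $\Delta$ with $\Gamma'' \subset {\rm Aut}_0(\Delta,\mathcal{A})$. By Corollary~\ref{cor:conjugatlas} there exists $g \in {\rm Aut}_0(\Delta)$ with $g\Gamma g^{-1} = {\rm Aut}_0(\Delta,\mathcal{A})$; in particular $\Gamma'' \subset g\Gamma g^{-1}$, and the problem is reduced to showing that the inclusion is of finite index.

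To this end I would exploit the fact that $g\Gamma g^{-1}$ acts simply transitively on the set of chambers of $\Delta$: for any subgroup $H$ of a group $G$ acting simply transitively on a set $X$, the $H$--orbits on $X$ are in natural bijection with the right cosets $H \backslash G$. Since $\Gamma''$ is finite index in the chamber-cocompact group $\Gamma'$, it has finitely many orbits on chambers, so $[g\Gamma g^{-1} : \Gamma''] < \infty$. Combined with $[\Gamma' : \Gamma''] < \infty$, the inclusion $\Gamma'' \subset \Gamma' \cap g\Gamma g^{-1}$ exhibits $\Gamma'$ and $g\Gamma g^{-1}$ as commensurable in ${\rm Aut}_0(\Delta)$, hence $\Gamma'$ is commensurable with $\Gamma$. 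I do not anticipate any real obstacle at this stage: all the substantive work has been carried out in the atlas machinery (especially the rigidity statement of Proposition~\ref{prop:conjugatlas}), and the only point demanding any attention is the orbit--coset bijection, which is immediate from simple transitivity.
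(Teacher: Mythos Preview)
Your proposal is correct and follows essentially the same route as the paper: pass to a finite index subgroup without holonomy, invoke Proposition~\ref{prop:noholatlas} to obtain an invariant atlas, then use Corollary~\ref{cor:conjugatlas} to identify the atlas stabilizer with a conjugate of $\Gamma$ acting simply transitively on chambers, and conclude finite index from the orbit--coset bijection. The only cosmetic difference is that the paper absorbs the passage to the finite index subgroup into the opening sentence (``we may assume that $\Gamma'$ itself has no holonomy''), whereas you keep $\Gamma''$ explicit throughout.
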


\begin{proof}
We may assume that $\Gamma'$ itself has no holonomy.
 By \fullref{prop:noholatlas}, there exists an atlas $\mathcal A$
on $\Delta$ which is invariant under $\Gamma'$. By
\fullref{cor:conjugatlas}, ${\rm Aut}_0(\Delta,{\mathcal A})$ is
simply transitive on chambers. The index of $\Gamma'$ in ${\rm
Aut}_0(\Delta,{\mathcal A})$ is thus precisely the number of $\Gamma'$--orbits
in the set of chambers of $\Delta$. Hence by assumption, $\Gamma'$ is of
finite index in ${\rm Aut}_0(\Delta,{\mathcal A})$.

 Again by \fullref{cor:conjugatlas}, the groups ${\rm
Aut}_0(\Delta,{\mathcal A})$ and $\Gamma$ are conjugate in ${\rm
Aut}_0(\Delta)$.
\end{proof}

\begin{proof}[Proof of \fullref{thm:commhol}]
Let $\Gamma'$ denote a uniform lattice of ${\rm Aut}_0(\Delta)$. If
$\Gamma'$ has a finite index subgroup with no holonomy then by
\fullref{thm:noholcomm}, it is commensurable to the graph
product $\Gamma$ in ${\rm Aut}_0(\Delta)$.

Conversely if $\Gamma'$ is commensurable with $\Gamma$, consider $f\in {\rm
Aut}_0(\Delta)$ where $\Gamma_1\subset \Gamma$ and $\Gamma'_1\subset\Gamma'$ are such
that $f\Gamma_1 f^{-1}=\Gamma'_1$. Then $\Gamma_0\cap \Gamma_1$ is a
finite index subgroup of $\Gamma$ without holonomy, thus $f(\Gamma_0\cap
\Gamma_1)f^{-1}$ is a finite index subgroup of $\Gamma'$ without holonomy.
\end{proof}

\begin{proof}[Proof of \fullref{cor:commL}] (Leighton's Lemma for biregular
graphs)\qua
Let ${X}_1$ and $X_2$ denote two biregular graphs of valencies $p$ and $q$, respectively. Then $\pi
_1X_k$ are uniform lattices of the biregular tree $T$ of valencies
$p$ and $q$. In fact the first barycentric subdivision of $T$ is the
right-angled building associated to the free product of
$G_i=\unfrac{\integers}{p\integers}$ and  $G_j=\unfrac{\integers}{q\integers}$. Observe that the stabilizer of a $\iperp$-- or a
$j^{\perpeq}$--residue in $\pi
_1X_k$ is trivial, because in this case $\iperp=\{i\!\}$ and 
$j^{\perpeq}=\{j\}$. In particular $\pi
_1X_k$ has no holonomy and we may apply \fullref{thm:noholcomm}:
either of the fundamental groups are commensurable with the free product
$\Gamma=\unfrac{\integers}{p\integers}*\unfrac{\integers}{q\integers}$.
\end{proof}

\begin{proof}[Proof of \fullref{cor:commJS}]
Fix a graph $\mathcal G$ with vertex set $T$. Choose two families of groups
$(G_i)_{i\in I},(G'_i)_{i\in I}$, such that $\size{G_i}=\size{G'_i}=q_i$ for each $i\in I$.
Consider the graph-product $\Gamma$ of $\{G_i\}_{i \in I}$ along $\mathcal G$, 
and set similarly $\Gamma'=\Gamma({\mathcal G},(G'_i)_{i\in I})$. Let
$\Delta$ and $\Delta'$ denote the right-angled building associated to the graph
products $\Gamma$ and $\Gamma'$.

Then $\Delta$ and $\Delta'$ are both  ``regular with parameters  $(q_i)_{i\in
I}$'', in the sense that each vertex of type $\{i\!\}$ is contained in
$q_i$  chambers. Thus by Proposition 5.1 of Haglund and Paulin \cite{HaglundPaulin03}, the
buildings
$\Delta$ and $\Delta'$ are isomorphic.

This provides an embedding of $\Gamma'$ inside  ${\rm Aut}_0(\Delta)$.
Now $\Gamma'$ is virtually without holonomy by \fullref{exmp:gamma0},
thus by \fullref{thm:noholcomm}, the group $\Gamma' \subset {\rm
Aut}_0(\Delta)$ is  commensurable with $\Gamma$.
\end{proof}

\begin{rem}
In the proof above we have in fact $\Gamma_0\simeq\Gamma'_0$ because one
can check directly that the cube complexes $\Gamma_0\backslash\Delta $ and
$\Gamma'_0\backslash\Delta
$ are  isomorphic.
\end{rem}

\section{Killing the holonomy}\label{sec:killhol}

In this section we  assume that $\mathcal G$ is finite and all groups
$G_i$ are finite, so that $\Delta$ is locally compact (see
\fullref{cor:chamberneighbor}). Let
$q_i$ denote the cardinality of $G_i$.

\begin{defn}\label{defn:separable}
 Let $G$ denote any group and let $H$ denote some subgroup of $G$. Then
$H$ is said to be \textit{separable in $G$\/} whenever the intersection of
finite index subgroups of $G$ containing $H$ reduces to $H$.
\end{defn}

\begin{thm}\label{thm:separcommens}
 Let $\Gamma'$ denote some subgroup of ${\rm Aut}_0(\Delta)$, such that
there are only finitely many $\Gamma'$--orbits in the set of
$\iperp$--residues for each $i\in I$. Assume that for each
$\iperp$--residue $R$ any  finite index subgroup of $\Gamma'_R$ is
separable in $\Gamma'$. Then $\Gamma'$ has a finite index subgroup with
no holonomy.
\end{thm}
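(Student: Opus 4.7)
The plan is to use the separability hypothesis to upgrade the kernels of the holonomy representations on each residue stabilizer to finite index subgroups of the whole ambient group $\Gamma'$ with the right intersection property, and then to take a normal core so that the resulting subgroup has trivial holonomy not only at the chosen representative residues but at every $\Gamma'$--translate of them.

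\textbf{Step 1 (finite index kernels)}\qua Fix $i\in I$ and a $\iperp$--residue $R$. The $i$--holonomy $\Gamma'_R\to\mathfrak S(\pi_0(\partial_iR))$ takes values in a finite group (in fact a subgroup of $\mathfrak S(q_i)$, cf.\ \fullref{defn:holbuilding}), so its kernel $K_{i,R}$ is a finite index subgroup of $\Gamma'_R$.

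\textbf{Step 2 (using separability)}\qua By hypothesis, $K_{i,R}$ is separable in $\Gamma'$. Choose coset representatives $g_1,\dots,g_n$ for the non-trivial cosets of $K_{i,R}$ in $\Gamma'_R$. For each $g_j$, separability provides a finite index subgroup $N_j\subset \Gamma'$ with $K_{i,R}\subset N_j$ and $g_j\notin N_j$. Set $N_{i,R}:=\bigcap_j N_j$, still of finite index in $\Gamma'$. If $x\in N_{i,R}\cap \Gamma'_R$, write $x=g_j k$ with $k\in K_{i,R}$; then $g_j=xk^{-1}\in N_{i,R}\subset N_j$, forcing $j=0$, so that $N_{i,R}\cap \Gamma'_R=K_{i,R}$.

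\textbf{Step 3 (finitely many orbits)}\qua The set $I$ is finite and by assumption, for each $i\in I$ there are only finitely many $\Gamma'$--orbits of $\iperp$--residues. Pick one representative $R_{i,\alpha}$ in each orbit, and let
\[
H_0:=\bigcap_{i,\alpha} N_{i,R_{i,\alpha}},
\]
a finite intersection of finite index subgroups, hence itself of finite index in $\Gamma'$. By Step~2, $H_0\cap \Gamma'_{R_{i,\alpha}}=K_{i,R_{i,\alpha}}$ for every representative.

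\textbf{Step 4 (normal core)}\qua Let $H$ be the normal core of $H_0$ in $\Gamma'$; this is the intersection of the finitely many $\Gamma'$--conjugates of $H_0$, hence a normal finite index subgroup of $\Gamma'$ contained in $H_0$. For any $\iperp$--residue $R'$, write $R'=\gamma'R_{i,\alpha}$ for a suitable $\gamma'\in\Gamma'$ and orbit representative $R_{i,\alpha}$. Using normality of $H$ we get
\[
H\cap \Gamma'_{R'}=H\cap \gamma' \Gamma'_{R_{i,\alpha}}{\gamma'}^{-1}=\gamma'(H\cap \Gamma'_{R_{i,\alpha}}){\gamma'}^{-1}\subset \gamma' K_{i,R_{i,\alpha}}{\gamma'}^{-1}.
\]
The action of $\gamma' K_{i,R_{i,\alpha}}{\gamma'}^{-1}$ on $\pi_0(\partial_i R')$ is the $\gamma'$--conjugate of the action of $K_{i,R_{i,\alpha}}$ on $\pi_0(\partial_i R_{i,\alpha})$, which is trivial by definition of $K_{i,R_{i,\alpha}}$. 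Hence the $i$--holonomy of $H$ vanishes at every $\iperp$--residue, for every $i\in I$; that is, $H$ has no holonomy. The main subtlety is Step~2, where separability is needed to descend from a finite index subgroup of the (potentially quite small) residue stabilizer to a finite index subgroup of $\Gamma'$ with controlled intersection; everything else is bookkeeping, with the normal core dealing uniformly with the rest of each orbit. Combining this with \fullref{thm:commhol} (via \fullref{thm:noholcomm}) then establishes \fullref{thm:commsepar}.
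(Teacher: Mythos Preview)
Your proof is correct and follows essentially the same route as the paper's: separate the kernel of each holonomy representation to a finite-index subgroup of $\Gamma'$ with controlled intersection, pass to a normal core to handle all translates, and intersect over a finite set of orbit representatives (the paper normalizes before intersecting rather than after, but this is immaterial). One small slip: in Step~3 you only get $H_0\cap\Gamma'_{R_{i,\alpha}}\subset K_{i,R_{i,\alpha}}$, not equality, since $H_0$ is smaller than $N_{i,R_{i,\alpha}}$; but only this inclusion is used in Step~4, so the argument is unaffected.
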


\begin{proof}
 Suppose we have  fixed $i\in I$ and chosen some $\iperp$--residue
$R$. Consider the following two properties:

\begin{enumerate}
 \item[$(P_1)$] There exists a finite index subgroup $\Gamma''\subset
\Gamma'$ such that the $i$--holonomy of $\Gamma''$ at $R$ is trivial.

 \item[$(P_2)$] There exists a finite index subgroup $\Gamma''\subset
\Gamma'$ such that for every $\gamma'\in\Gamma'$ the $i$--holonomy of
$\Gamma''$ at $\gamma' R$ is trivial.
\end{enumerate}

 We first show  $(P_1)\Rightarrow (P_2)$. 
 Consider first a finite
index subgroup $\Gamma''$ such that  the $i$--holonomy of $\Gamma''$ at
$R$ is trivial. Up to replacing $\Gamma''$ by the intersection of its
conjugates in $\Gamma'$ we may assume that $\Gamma''$ is normal. Then its
$i$--holonomy at $\gamma' R$ is trivial for any $\gamma'\in\Gamma'$.

 This is because by the normality assumption $\gamma'$ conjugates
$\Gamma''_R$ onto $\Gamma''_{\gamma'\!R}$, and equivariantly conjugates
the $i$--holonomy representations at $R$ and $\gamma'R$.

 Now property $(P_2)$ for each pair $(i,R)$ implies the theorem. Indeed
choose a finite set $R_1,\dots,R_n$ of representative of
$\iperp$--residues, $i\in I$, modulo $\Gamma'$. Applying $(P_2)$  we
find for each $1\le j\le n$ a finite index subgroup ${\Gamma''}_j$ such
that the $i_j$--holonomy of ${\Gamma''}_j$ at any translate $\gamma' R_j$
is trivial. Then the finite intersection $\bigcap_j{\Gamma''}_j$ is a finite
index subgroup with no holonomy.

So it remains to prove $(P_1)$.
 Let $K_R$ denote the kernel of the $i$--holonomy representation
${\Gamma'}_R\to {\mathfrak S}(\pi_0(\partial_i R))$. Note that $K_R$ is a
finite index subgroup of ${\Gamma'}_R$ because $G_i$ is finite. Thus
there is a family $({\Gamma'}_s)_{s\in S}$ of finite index subgroups of
$\Gamma'$ whose intersection is $K_R$. Let $h_0=1,h_1,\dots,h_n$ denote a
set representatives of ${\Gamma'}_R$ modulo $K_R$. Then there exists
$s_1,\dots,s_n$ such that $h_j\not\in {\Gamma'}_{s_j}$.

 Set $\Gamma''=\cap _j {\Gamma'}_{s_j}$. This a finite index subgroup
because the intersection is finite. Furthermore $K_R\subset \Gamma''$. We
claim that in fact ${\Gamma'}_R\cap \Gamma'' =K_R$. For  $h\in
{\Gamma'}_R\setminus K_R$ we have $h=ks_j$ for some $1\le j\le n$ and
some $k\in K_R$. We cannot have $h\in  \Gamma''$, else as $k\in 
\Gamma''$ we would have $s_j\in  \Gamma''$, absurd.

 Now the $i$--holonomy of $\Gamma''$ at $R$ is trivial because
${\Gamma''}_R=K_R$.
\end{proof}

\begin{proof}[Proof of \fullref{thm:commsepar}]
By \fullref{prop:cat0}, residues are convex subcomplexes, thus
their stabilizers are quasiconvex subgroups, as well as their finite
index subgroups. So \fullref{thm:separcommens} applies, and to finish
we apply \fullref{thm:noholcomm}.
\end{proof}

\part{II\qua Virtual triviality of some extensions by finite groups}

\section{Polygonal complexes}\label{sec:polycompl}

\begin{defn}[Polygonal complexes]\label{defn:polycomplex}
Consider a discrete set $V$. Choose a family
of maps $\{e_{\alpha}\co \{0,1\}\to V\}_{\alpha
\in A}$ and use these maps to glue copies of
$[0,1]$ along its boundary (one copy for
each $e_{\alpha}$). This produces  a \textit{graph\/}
$\mathcal G$. The maps $e_{\alpha}$ naturally
extend to maps $e_{\alpha}\co [0,1]\to {\mathcal
G}$, called \textit{the basic oriented edges of
$\mathcal G$}. A map
$f\co {\mathcal G}\to {\mathcal
G}'$ between two graphs is \textit{combinatorial\/}
whenever for each $\alpha$ there is an
$\alpha'$ such that $f\circ
e_{\alpha}={e'}_{\alpha'}\circ i$ where $i$ is
an isometry of $[0,1]$.

Note that each simple polygonal line in
Euclidean space has a natural structure of
graph for which the basic oriented edges are
(restrictions of) affine maps.

Now consider a family $\{\pi_{\beta} \}
_{\beta\in B}$ of convex polygons of
$\Euclidean ^2$, together with a family of
combinatorial maps $f_{\beta}:\partial\pi
{_\beta}\to {\mathcal G}$. We may glue the
polygons $\pi
{_\beta}$ to the graph $\mathcal G$ along
their boundaries using the maps $f_{\beta}$. The
resulting space $X$ is \textit{a polygonal
complex}.  Then each map $f_{\beta}:\partial\pi
{_\beta}\to {\mathcal G}$ naturally   extends
to a map
$f{_\beta}\co  \pi{_\beta}\to X$ (\textit{the
basic oriented polygons of
$X$}).

The set \textit{$V$ is the set of vertices\/} (of
$\mathcal G$ and of $X$). The graph $\mathcal G$ is the \textit{1--skeleton of
$X$}.
A map $f\co X\to X'$ is \textit{combinatorial\/} if
$f({\mathcal G})\subset {\mathcal G}'$, the restriction
$f\vert_{{\mathcal G}}$ is a combinatorial
map of graphs and for each oriented polygon
$f_{\beta}\co \pi_{\beta}\to X$ we have $f\circ
f_{\beta}={f'}_{\beta'}\circ \varphi$, where
$\varphi\co \pi_{\beta}\to {\pi'}_{\beta'}$ is the
restriction of an affine isomorphism.
We will denote by ${\rm
Aut}(X)$ the group of bijective combinatorial maps $X\to X$.

\textit{An oriented edge of
   $X$} is a combinatorial map
$\overrightarrow a\co [0,1]\to X$. The
associated edge
$a$ is the image of $\overrightarrow a$. The
initial point of an oriented edge
$\overrightarrow a$ is $\overrightarrow
a(0)$, denoted by $i(\overrightarrow{a})$;
its terminal point is
$\overrightarrow a(1)$, denoted by
$t(\overrightarrow{a})$. The inverse of an
oriented edge  $\overrightarrow
a\co [0,1]\to X$ is the map $\overleftarrow
a\co [0,1]\to X$ defined by $\overleftarrow
a(u)=\overrightarrow
a(1-u)$. \textit{An oriented polygon of
   $X$} is a combinatorial map
$f\co \pi \to X$ where $\pi $ denotes some
Euclidean convex polygon. The associated
polygon is the image of $f$.
    Note
that the edges and polygons of
$X$ are the images of the basic oriented edges
and polygons.
   For each
   polygon $\pi $ of $X$ parametrized by the
basic oriented polygon
$f_{\beta}\co \pi_{\beta}\to X$ we denote by
$k_{\pi}
$ the number of edges in $\partial\pi_{\beta}$.

We say that a polygonal complex is \textit{regular\/} if for each $\beta\in B$ there is an
affine isomorphism  sending the
polygon $\pi_{\beta}$ to a regular Euclidean
polygon.
\end{defn}

\begin{defn}[Paths and cycles]
\textit{A (combinatorial) path\/} of a polygonal
complex
$X$ is a sequence
$c=(\overrightarrow{a_1},
\dots,\overrightarrow{a_n})$ of oriented edges
such that for each integer $0\le i<n$ we have
$t(\overrightarrow{a_i})=
i(\overrightarrow{a_{i+1}})$. {\rm A cycle of
$X$} is a path $(\overrightarrow{a_1},
\dots,\overrightarrow{a_n})$, $n>0$, such
that
$t(\overrightarrow{a_n})=
i(\overrightarrow{a_{1}})$ and
$\overleftarrow{a_i}\neq
\overrightarrow{a_{i+1}}$.
\end{defn}

\begin{defn}[Barycentric subdivision]
Let $X$ denote a polygonal complex. For any
oriented edge $\overrightarrow a$ of $X$
\textit{the center of $\overrightarrow a$\/} is the
point
$\overrightarrow a(\unfrac{1}{2})$. For any
oriented polygon $f_{\beta}\co \pi_{\beta}\to X$
\textit{the center of $f_{\beta\/}$} is the image
under
$f_{\beta}$ of the barycenter of $\pi_{\beta}$.
Note that these points depend only on the
images of the oriented edges or polygons.

We set $V_0=V$, the set of vertices of $X$.
We denote by $V_1$ the set of centers of
oriented edges and by $V_2$ the set of centers of
oriented polygons. We denote by $E_{01}$
the set of restrictions of basic oriented
edges of
$X$ to
$[0,\unfrac{1}{2}]$ or $[\unfrac{1}{2},1]$. We
denote by $F$
the set of restrictions of  basic oriented
polygons
$f_{\beta}$ to triangle with vertices $v,m,p$,
with $p$ the barycenter of $\pi_{\beta}$, $m$
the barycenter of an edge $e$ of $\pi_{\beta}$
and $v$ a vertex of the edge $e$ (these maps
will be called \textit{basic triangles\/}). With
the same notation we denote by
$E_{02}$ the set of restrictions of
$f_{\beta}$ to segments of the form $[vp]$,
and  we denote by $E_{12}$ the set of
restrictions of
$f_{\beta}$ to segments of the form $[mp]$.

The barycentric subdivision of $X$ is the
multisimplicial complex with set of vertices
$V_0\sqcup V_1\sqcup V_2$, set of edges $E_{01}
\sqcup E_{02}\sqcup E_{12}$ and set of
(basic) triangles $F$. This multisimplicial
complex will be denoted by $X'$. It is
naturally homeomorphic to $X$.
\end{defn}

\begin{defn}[Rank of vertices]
For each vertex $v$ of the barycentric
subdivision $X'$ of a polygonal complex $X$
we set ${\rm rk}(v)=i\iff v\in V_i$ for
$i\in \{0,1,2\}$. This integer we call
\textit{the rank of $v$\/}. For every vertex $p$
of rank 2 we will set $k_p=k_{\pi} $, where
$\pi $ is the polygon whose center is $p$.

   We
will denote by
${\rm Aut}_{\rm rk}(X')$ the group of
automorphisms of $X'$ which preserve the rank.
\end{defn}

\begin{rem}\label{rem:autreg}
Let $X$ denote a regular polygonal complex.
Then the natural inclusion ${\rm
Aut}(X)\to {\rm
Aut}_{\rm rk}(X')$ is an isomorphism.
\end{rem}

\begin{defn}[Links]
Let $X$ denote a polygonal complex, and let
$v$ denote a vertex of $X$. Then ${\rm link}(v,X')$
is the barycentric subdivision of a unique
graph, denoted by ${\rm link}(v,X)$. Vertices of
${\rm link}(v,X)$ correspond to oriented edges of $X$
with initial point $v$, and edges of
${\rm link}(v,X)$ correspond to polygons of $X$
containing $v$. For each edge $e$ of
${\rm link}(v,X)$ we will set $k_e =k_{\pi} $, where
$\pi $ is the polygon of $X$ containing $v$
and corresponding to $e$.
\end{defn}

\begin{defn}[Nonpositive curvature conditions]
We consider four kinds of nonpositive
curvature conditions on a polygonal complex
$X$ with vertex set $V$.
\begin{enumerate}\leftskip 2pt
\item[(Q)]
For all $v\in V$, if $c=(\overrightarrow{e_1},\dots,
\overrightarrow{e_n})$ is a cycle of ${\rm link}(v,X)$
then $n\ge 4$, and if 
$\pi$ is a polygon then $k_{\pi} \ge 4$.

\item[($\mathrm{C^4}$)] For all $v\in V$, if $
c=(\overrightarrow{e_1},\dots,\overrightarrow{e_n})$ is a cycle
of ${\rm link}(v,X)$ then $\sum_i (\frac{1}{2}-\frac{1}{4f_{e_i}})\ge 1$
where $f_{e_i}$ is the integer such that
$k_{e_i}-4f_{e_i}\in\{0,1,2,3\}$.

\item[($\mathrm{C^2}$)] For all $v\in V$, if 
$c=(\overrightarrow{e_1},\dots,\overrightarrow{e_n})$ is a cycle
of  ${\rm link}(v,X)$ then 
$\sum_i (\frac{1}{2}-\frac{1}{2t_{e_i}})\ge 1$
where $t_{e_i}$ is the integer such that
$k_{e_i}-2t_{e_i}\in\{0,1\}$.

\item[(C)] For all $v\in V$, if $c=(\overrightarrow{e_1},\dots,
\overrightarrow{e_n})$ is a cycle of ${\rm link}(v,X)$ then
$\sum_i (\frac{1}{2}-\frac{1}{k_{e_i}})\ge 1$.
\end{enumerate}
There are also analogous conditions
($\mathrm{Q}'$), ($\mathrm{C}'$), ($\mathrm{C^2}'$), ($\mathrm{C^{4}}'$) obtained by
replacing
$n\ge 4$ or $\sum \ge 1$ in (Q), (C), ($\mathrm{C^2}$), ($\mathrm{C^4}$) by  $n> 4$ or
$\sum> 1$.

Clearly (Q) $\Rightarrow (\mathrm{C^4})
\Rightarrow (\mathrm{C^2}) \Rightarrow$ (C), and $(\mathrm{Q}') \Rightarrow 
(\mathrm{C^4}')
\Rightarrow (\mathrm{C^2}') \Rightarrow (\mathrm{C}')$.
\end{defn}

\begin{defn}[Piecewise Euclidean metrics]
We define four types of piecewise Euclidean
metrics on (the barycentric
subdivision of) our polygonal complexes.

Let $X$ denote a polygonal complex. For each
basic triangle $f\co T\to X'$ of the barycentric
subdivision let $v,m$ and $p$ denote the vertices
of $f(T)$ with ranks $0, 1$ and $2$. There exist a unique
affine automorphism $\varphi\co \Euclidean ^2\to
\Euclidean ^2$ such that the triangle
$T_0=\varphi^{-1}(T)$ has the following
characteristics: the angle of $T_0$ at
$p_0=\varphi^{-1}(p)$ is $\unfrac{\pi}{k_p}$,
the angle  of $T_0$ at
$m_0=\varphi^{-1}(m)$ is $\unfrac{\pi}{2}$ the
length of the edge $[v_0m_0]$ is $\unfrac{1}{2}$ where
$v_0=\varphi^{-1}(v)$. We then consider the new triangles $f\circ\varphi$
and observe that two such triangles differ on
an edge of
$X'$ by an isometry. Thus we have defined a
structure of $M_0$--polyhedral complex on $X'$
in the sense of
\cite[Definition 7.37]{BridsonHaefliger}. We
will call the associated length metric on
$X$ the (C)--metric.

If no polygon of $X$ is a triangle, we may
similarly define on $X$ a
($\mathrm{C^4}$)--metric, a $(\mathrm{C^2})$--metric and (Q)--metric.
For the ($\mathrm{C^4}$)--metric, we
demand that the angle of the Euclidean
triangle $T_0$ at $p_0$ be $\unpfrac{\pi}{4f_p}$
with $f_p$  
the integer defined by $k_p-4f_p\in\{0,1,2,3\}$.
For the $(\mathrm{C^2})$--metric, we
demand that the angle of the Euclidean
triangle $T_0$ at $p_0$ be $\unpfrac{\pi}{2t_p}$
with $t_p$ the integer defined by
$k_p-2t_p\in\{0,1\}$.
For the (Q)--metric, we
demand that the angle of the Euclidean
triangle $T_0$ at $p_0$ be $\unfrac{\pi}{4}$.
\end{defn}

\begin{lem}\label{lem:condcat0}
If a locally compact polygonal complex $X$ satisfies
condition $(\mathrm C)$ then the $(\mathrm C)$--metric on $X$
is locally $\CAT(0)$. The same is true if we replace $(\mathrm C)$ by
$(\mathrm{C^4})$, $(\mathrm{C^2})$ or $(\mathrm Q)$.

If a compact polygonal complex $X$ satisfies 
condition $(\mathrm Q')$, $(\mathrm C')$, $(\mathrm{C^2}')$ or $(\mathrm{C^4}')$ then $\pi_1(X)$ is 
Gromov-hyperbolic.
\end{lem}

\begin{proof}
In each case $X$ has (locally) finitely many isomorphisms type of
polygons and the  systole of the  metric  link of vertices is at least $2\pi$
(computations are left to the reader). Thus  the associated
length metric is locally $\CAT(0)$ by 
\cite[Theorem 5.2 and Lemma 5.6, pages 206--207]{BridsonHaefliger}.

Under conditions $(\mathrm Q')$, $(\mathrm C')$, $(\mathrm{C^2}')$ 
or $(\mathrm{C^4}')$,  the 
systole of the  metric link of rank--0 vertices is  greater than $2\pi$. So we may
replace our Euclidean triangles by small enough hyperbolic triangles with
the same angles at rank--1 and rank--2 vertices. This defines a structure
of piecewise hyperbolic simplicial complex in which the link condition is
still fulfilled.  Thus  the associated
length metric is locally $\CAT(-1)$ \cite{BridsonHaefliger}, and so $\pi _1(X)$ is Gromov-hyperbolic by
Proposition 1.2 of \cite{BridsonHaefliger}.
\end{proof}

\begin{defn}
Let $X$ denote a polygonal complex. Let $\Pi=\Pi(X)$ denote
the set of 2-cells of $X$, and denote by $V=V(X)$ the set of vertices
of $X$ and $E=E(X)$ the set of edges.

An \textit{orientation\/} of an edge
$e$ is the choice of an oriented edge with image $e$. Note that each edge
has two orientations. We choose once and for all an orientation for each
edge of
$X$, using the unique basic oriented edge with image $e$. If an oriented
edge $\ovae$ with image $e$ is the chosen oriented edge we
set
$\eta(\!\ovae\!)=1$, and if it is the opposite edge we set
$\eta(\!\ovae\!)=-1$.

  We also  choose
once and for all for  each  basic oriented polygon $f\co \pi_{\beta}\to X$ an
orientation of each edges of
$\partial
\pi_{\beta} $, in such a way that the terminal point $v$ of an oriented edge
of
$\partial
\pi_{\beta} $ is the initial point of the second oriented edge of $\partial
\pi_{\beta} $ whose image contains $v$. For each vertex
$v\in\partial\pi_{\beta}$ there is a unique cycle $\delta_v\pi_{\beta}=(\overrightarrow{e_1},\dots,\overrightarrow{e_n})$ using the
chosen oriented edges, with $i(\overrightarrow{e_1})=v$, and such that $n$ is the number of sides of $\pi_{\beta}$.  For  each polygon
$\pi
$ of
$X$, $\pi =f_{\beta}(\pi_{\beta})$, and each vertex $v$ of $\pi_{\beta}$,  we
set
$\delta_v\pi =
(f_{\beta}\circ
\overrightarrow{e_1},\dots,f_{\beta}\circ\overrightarrow{e_n})$, where
$(\overrightarrow{e_1},\dots,\overrightarrow{e_n})=\delta_v\pi_{\beta}$.

Let
$\field A$ denote some finite abelian group. A \textit{2--cocycle\/} on $X$ (with
values in
$\field A$) is just a map $c\co \Pi\to \field A$ assigning to each 2-cell $\pi $ of  $X$
an element
$c(\pi )\in \field A$. A \textit{1--cochain\/} on
$X$ (with values in $\field A$) is a map $u\co E\to \field A$. The coboundary of a
1--cochain $u$ is the 2--cocycle $\delta u$ assigning to each
2-cell $\pi $ the sum $\tsty \sum \eta(\!\ovae\!)u(e)$ over $\ovae\in\delta_v\pi
$. Note that this element of $\field A$ is
well-defined independently of the choice of $v$ on $\partial \pi_{\beta}$,
because $\field A$ is abelian (this is why we are using additive notation).
The \textit{support\/} of a 1--cochain $u$ is the set of edges on which $u$
does not vanish; it will be denoted by ${\rm Supp}(u)$.

Let $\varphi$ denote an automorphism of $X$. 
For any edge $e$ we set $\eta(\varphi,e)=1$
whenever $\varphi$ sends the basic oriented
edge at $e$ onto the basic oriented edge at 
$\varphi(e)$. Otherwise $\varphi$ sends the
basic oriented edge at $e$ to the inverse of
the basic oriented edge at $\varphi(e)$ and we
set $\eta(\varphi,e)=-1$.  For $u$  a 1--cochain
and $e$ an edge  we set
$\varphi(u)(e) =\eta(\varphi,
\varphi^{-1}(e))u(\varphi^{-1}(e))$. We thus
obtain an action of ${\rm Aut}(X)$ on the
additive group of 1--cochains.

Similarly for any polygon $\pi$ of $X$ we set $\eta(\varphi,\pi)=1$ whenever $\varphi$ sends the basic oriented polygon at $\pi$ onto the basic oriented polygon at $\varphi(\pi)$. Otherwise we set $\eta(\varphi,\pi)=-1$. Then for any 2--cocycle $c$ and any polygon $\pi$ we set 
$\varphi(c)(\pi)$ equal to $\eta(\varphi,\varphi^{-1}(\pi))c(\varphi^{-1}(\pi))$. We thus obtain an action of ${\rm Aut}(X)$ on the additive group of 2--cocycles.
\end{defn}

\begin{exmp}\label{exmp:calculor}
Let $\varphi$ denote an automorphism of $X$.  Let $\pi$ be a polygon of $X$ and let $\ovae$ denote some oriented edge of $\delta_v\pi$ for any vertex $v\in\partial \pi$. Set $\pi '=\varphi(\pi)$, $v'=\varphi(v)$ and let $\ovae\!'$ be the oriented edge of $\delta_{v'}{\pi'}$ which has the same image  as  $\varphi\circ \ovae$. Then 
$$\eta(\!\ovae\!')=\eta(\varphi,\pi) \eta(\varphi,e) \eta(\!\ovae\!).$$
In particular we have
$(\varphi(c)+\delta(\varphi u))(\pi')
=\eta(\varphi,\pi)((c+\delta u)(\pi))$ for any 2--cocycle  $c$ and any
1--cochain $u$. Also
$\delta(\varphi u) =\varphi(\delta u)$.
\end{exmp}

\section{Walls}\label{sec:walls}

\begin{defn}
Let $X$ denote some  polygonal complex,  and
let $\pi$ denote some polygon in $X$,  not a
triangle. We say
that two distinct oriented edges
$\overrightarrow{a},\overrightarrow{a}'$
contained in $\pi$ are \textit{parallel in
$\pi$\/} whenever there are  disjoint connected
subgraphs
$\sigma,\sigma'$  of
$\partial\pi$ satisfying the following:
\begin{enumerate}
\item the boundary $\partial\pi$ is  the
union $a\cup \sigma\cup a'\cup \sigma'$
\item if $\ell,\ell'$ denote the numbers of edges of
$\sigma,\sigma'$, then
$\ell\le\ell'< \ell+2$
\item the terminal vertices of  the oriented
edges
$\overrightarrow{a},\overrightarrow{a}'$
belong to $\sigma$, so their initial vertices
belong to $\sigma'$.
\end{enumerate}

If such subgraphs exist then they
are unique. Hence any oriented  edge
$\overrightarrow{a}$ of a polygon $\pi$ is
parallel to exactly one other oriented
edge $\overrightarrow{a}'   $ of $\pi$. We denote
by $\parallel$ the equivalence relation on
oriented edges generated by parallelism
inside polygons. Note that the group ${\rm
Aut}(X)$  preserves the relation
$\parallel$.\end{defn}

\begin{figure}[ht!]
\labellist
\small\hair 2pt
\pinlabel $\sigma$ at 135 450
\pinlabel $\sigma'$ at 135 275
\pinlabel $\sigma$ at 304 450
\pinlabel $\sigma'$ at 304 215
\pinlabel $\sigma$ at 480 515
\pinlabel $\sigma'$ at 480 215
\pinlabel $\sigma$ at 650 515
\pinlabel $\sigma'$ at 650 185
\endlabellist
\centerline{\includegraphics[width=11cm]{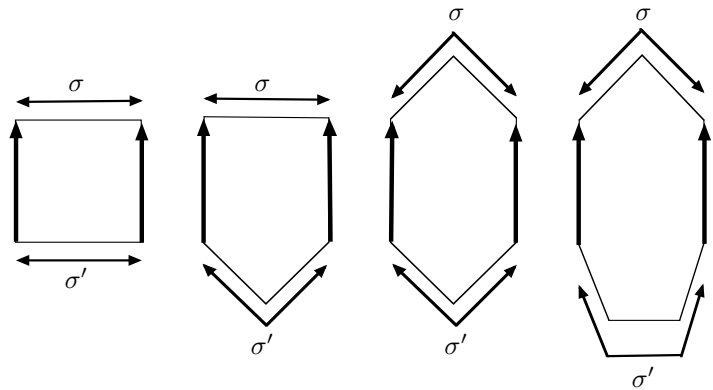}}
\caption{Two parallel oriented edges inside a polygon}
\end{figure}

\begin{exmp}
   Let $k$ denote the number of edges of
the polygon $\pi$, and assume $k\ge 4$.
We consider parallelism inside $\pi$.

If $k$ is even then two oriented edges
$\overrightarrow{a},\overrightarrow{a}'$
contained in $\pi$ are { parallel in
$\pi$} whenever they induce opposite
orientations on $\partial\pi$ and the
associated geometric edges $a,a'$ are opposite
on $\partial\pi$. Note that this is the 
parallelism relation defined in \cite{HaglundWise05}
for square complexes.  See also  Haglund and Paulin \cite{HaglundPaulin98}
and Wise \cite{WiseSmallCanCube04}.

If $k$ is odd, and so $k=2t+1$, then $\ell=t-1$ and 
$\ell'=t$. Thus two oriented edges
$\overrightarrow{a},\overrightarrow{a}'$
contained in $\pi$ are { parallel in
$\pi$} whenever they induce opposite
orientations on $\partial\pi$ and 
$i(\overrightarrow{a}')$ is the   vertex of
$\pi$ opposite to the center of
$\overrightarrow{a}$.
\end{exmp}

\begin{defn}[\rm Walls]\label{defn:wall}
Let $X$ denote a polygonal complex. A \textit{
wall\/} of $X$ is an equivalence class of
$\parallel $.

We say that \textit{a wall $M$
passes through an oriented edge
$\overrightarrow a$\/}  whenever
$\overrightarrow a\in M$. We say that \textit{a
wall
$M$ separates a polygon $\pi $ of $X$\/}
whenever $M$ passes through distinct oriented edges
$\overrightarrow a,\overrightarrow{a}'$ with
$a\subset \partial\pi,a'\subset \partial\pi$ and $\overrightarrow
a,\overrightarrow{a}'$ are parallel inside $\pi $.

A \textit{diameter\/} of a polygon $\pi $ of $X$ is the union of two edges
$[v,p],[v',p]$ of $X'$ such that $p$ is the center of $\pi $, and there
are distinct oriented edges $\overrightarrow a,\overrightarrow{a}'\subset
\partial\pi$ which are parallel in $\pi $, and whose centers are $v,v'$.
The diameter is then said to be \textit{dual to $\overrightarrow
a,\overrightarrow{a}'$\/}.

\textit{The geometric wall associated to a wall
$M$\/} is the subcomplex $\size{M\!}$ of the
first barycentric subdivision $X'$, union of those diameters of polygons
which are dual to $\overrightarrow
a,\overrightarrow{a}'$, with $\overrightarrow
a,\overrightarrow{a}'\in M$.

  If some wall $M$ passes through an oriented
edge
$\overrightarrow a$, we will also  say that
\textit{$M$ is dual to
$\overrightarrow a$\/}, and that \textit{$M$ is dual to
$a$\/}. For any wall $M$ we will
denote by $V(M)$ the set of vertices $v$
belonging to an edge $a$  to which $M$ is dual.

  Note that a combinatorial map $f\co X\to Y$ 
maps a wall of $X$ inside a wall of $Y$. The
same is true for the associated
geometric wall.
\end{defn}

\begin{figure}[ht!]
\centering
\includegraphics[width=11cm]{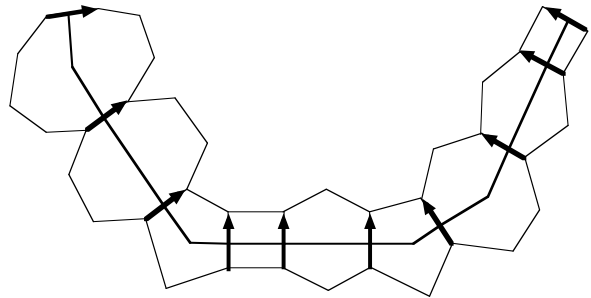}
\caption{A wall and its associated geometric wall}
\end{figure}

\begin{lem}\label{lem:walltree}
 Assume that $X$ is locally compact, satisfies condition  $(\mathrm{C^2})$  and is
simply connected. Equip $X$ with the
$(\mathrm{C^2})$--metric. Then:
\begin{enumerate}
\item Each geometric wall is a totally geodesic subtree.

\item Each geometric wall separates $X$ into two connected components.

\item Two opposite oriented edges are not parallel.

\item The intersection of a geometric wall and a polygon is either empty
or a single diameter. In particular a geometric wall $\size{M\!}$ is the
first barycentric subdivision of a tree whose vertices correspond to rank--1 
vertices of $\size{M\!}$, and whose centers of edges  correspond to rank--2 vertices 
of $\size{M\!}$.
\end{enumerate}
\end{lem}

\begin{proof}
Consider any diameter $d=[v,p]\cup [p,v']$ of some polygon of $X$. By
definition of the $(\mathrm{C^2})$ metric the angle of the geodesic segments
$[p,v],[p,v']$ at $p$ is $\pi $. Thus diameters of polygons are local
geodesics.

Let $M$ denote some wall of $X$. By definition of $\parallel$, for any
two points $x,y\in\size{M\!}$ there is a sequence of diameters
$d_0,d_1,\dots,d_n$ and a sequence of polygons $\pi _0,\pi _1,\dots,\pi
_n$ such that $d_i\subset \pi _i$, $x\in d_0,y\in d_n$, and $d_i\cap d_{i+1}$ contains a rank--1 vertex. After
simplifications, we may assume that $d_i\neq d_{i+1}$. Then $d_i\cap
d_{i+1}=\{v_{i+1}\}$, where $v_{i+1}$ is the center of an edge
$\overrightarrow a_{i+1}$ to which $M$ is dual. Observe that the angle of
$d_i$ and $d_{i+1}$  at $v_{i+1}$ is ${\pi}{}$, so that
$(d_i,d_{i+1})$  is a local geodesic.

It follows that $(d_0,d_1,\dots,d_n)$ is a local geodesic. By
\fullref{lem:condcat0} the $(\mathrm{C^2})$ length metric on $X$ is $\CAT(0)$, so
a local geodesic is a global one \cite[Proposition
4.14, page 201]{BridsonHaefliger}.

We have shown that any two points of $\size{M\!}$ are contained in a
geodesic segment of $X$ sitting in $\size{M\!}$. Thus $\size{M\!}$ is totally
geodesic, and in particular it is contractible. Since $\size{M\!}$ is a
subgraph of
$X'$, it follows that $\size{M\!}$ is a tree.

Since each diameter of a polygon $\pi $ is separating $\pi $, we deduce
that $\size{M\!}$ is locally separating in two connected components. Since
$\size{M\!}$ is a tree we see that $\size{M\!}$ separates a neighborhood in
two connected components. But $X$ is simply connected, thus in fact
$\size{M\!}$ separates $X$ in
two connected components.

Assume now that $d,d'$ are  two distinct diameters of the same polygon
$\pi $ of $X$. Then $d,d'$ contain distinct edges $e,e'$ of $X'$ meeting
at $p$ with angle less than $\pi$. In particular the convex hull of $d\cup d'$ is
not contained in the 1--skeleton of $X'$. This shows that a totally
geodesic union of diameters of $X$ (such as $\size{M\!}$) does not contain
more than one diameter of each polygon.

Note that a parallelism between
$\overrightarrow a$ and its opposite $\overleftarrow a$ creates a curve
disjoint from $\size{M\!}$ and joining the endpoints of $\overrightarrow a$.
This contradicts the fact that $\size{M\!}$ separates a neighborhood.
\end{proof}

\section{Killing 2--cocycles}\label{sec:killcocycle}

\subsection{Groups acting without self-intersection}

\begin{defn}[Self-intersecting wall] Let
$X$ denote some simply connected polygonal
complex and let $\Gamma$ denote some subgroup
of ${\rm Aut}(X)$. We say that \textit{a wall $M$
is self-intersecting under $\Gamma$\/} whenever
there exists $\gamma\in \Gamma$ such that
$M,\gamma{M}$ are distinct,  but the  geometric
walls associated to $M,\gamma{M}$ have nonempty
intersection. We say that \textit{$\Gamma$ acts without self-intersection\/} whenever  no wall is
self-intersecting under
$\Gamma$.
\end{defn}
If $X$ is a  square complexes and $\Gamma$ acts
freely, this definition is equivalent to the
fact that $\Gamma\backslash X$ has no
self-intersecting wall and no one-sided wall in
the sense of \cite{HaglundWise05}.

\begin{lem}\label{lem:virtclean}
   Let $\Gamma$ denote a discrete cocompact
group of automorphism of some
simply connected polygonal  complex
$X$. Assume that the stabilizers of walls are
separable in $\Gamma$.

Then there is a  finite index subgroup 
$\Gamma'\subset\Gamma$ acting without
self-intersection.
\end{lem}

\begin{proof}
 Let $n$ denote the maximal number of sides
of a polygon of the complex $X$.

  For each wall $M$ of $X$  and
each subgroup $G\subset{\rm Aut}(X)$ we
introduce a set of bad elements:
$$B(M,G)=\{g\in G, gM\neq M\ {\rm and}\
\exists\ v,v' \in V(M), \ d(v',gv)\le n\}$$
where $d$ denotes the combinatorial distance
in the 1--skeleton.

\begin{figure}[ht!]
\labellist
\small\hair 2pt
\pinlabel $|gM|$ [bl] at 406 362
\pinlabel $gv$ [l] at 405 313
\pinlabel $\leq n$ [l] at 413 200
\pinlabel $v'$ [bl] at 430 130
\pinlabel $|M|$ [r] at -5 65
\pinlabel $v$ [tr] at 5 -3
\endlabellist
\centering
\includegraphics[width=11cm]{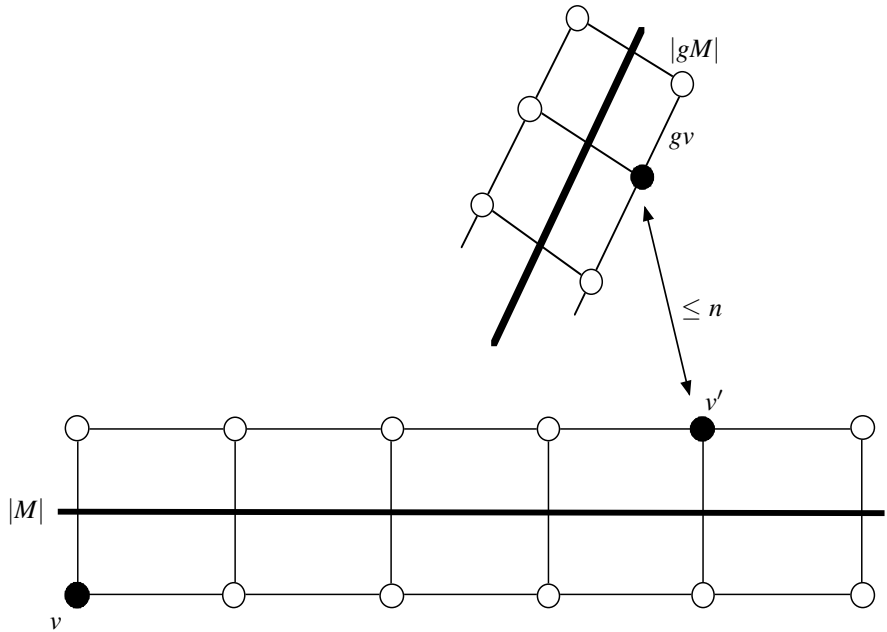}
\caption{An element of $B(G,M)$}
\end{figure}

Let $G_M$ denotes the stabilizer
of $M$.
  Note that $G_M$ is the set  of $g\in G$ such
that there exists
$\ovae\in M$ satisfying 
$g(\!\ovae\!)\in M$. Observe that
$B(M,G)$ is right- and left-invariant under
$G_M$, and contains those $g\in G$ such that $gM\neq M$ but $g\size{M\!}\cap\size{M\!}\neq\emptyset$.

There are finitely many $\Gamma$--orbits of
oriented edges since
$\Gamma$ is cocompact. Thus 
$\Gamma_M\backslash M$ and hence
$\Gamma_M\backslash V(M)$ is finite.  Thus
there is a finite set
$\{b_1,\dots,b_k\}$ such that
$B(M,\Gamma)=\Gamma_M b_1 \Gamma_M\cup\cdots\cup \Gamma_M b_k \Gamma_M$,
since $X$ is locally compact and $\Gamma$ acts discretely.

By definition  $\{b_1,\dots,b_k\}\cap
\Gamma_M=\emptyset$. Since
$\Gamma_M$ is separable by assumption there  is
a finite index subgroup
$\Gamma'\subset
\Gamma$ such that $\Gamma_M\subset\Gamma'$  and
$\{b_1,\dots,b_k\}\cap
\Gamma'=\emptyset$. So for
$1\le i\le k$ we have
$\Gamma'\cap
\Gamma_Mb_i\Gamma_M=\emptyset$, hence
$B(M,\Gamma')=B(M,\Gamma)\cap 
\Gamma'=\emptyset$.

Let us consider any two translates 
$\gamma_1{M},\gamma_2{M}$ ($\gamma_i\in
\Gamma'$) whose geometric associated walls
have a nontrivial intersection. Then either
${\gamma_2}^{-1}\gamma_1$ stabilizes
$M$ or it belongs to $B(M,\Gamma')$. The 
latter possibility has been excluded thus
$\gamma_1{M}=\gamma_2{M}$.

So for the moment we have proved a weak 
version of the lemma: for each wall $M$ we
have found a finite index subgroup 
$\Gamma'\subset \Gamma$ such that any two
translates
$\gamma_1{M},\gamma_2{M}$  ($\gamma_i\in
\Gamma'$) either are equal,  or have disjoint
associated geometric walls.

If we consider a  finite index subgroup
$\Gamma''\subset \Gamma'$ such that $\Gamma''$
is normal in $\Gamma$, then we will have the
above property for each wall in the
$\Gamma$--orbit of $M$.

Since $\Gamma$ is cocompact there are  finitely
many $\Gamma$--orbits of walls. Thus taking a
finite intersection of finite index normal
subgroups $\Gamma''$ as above yields a finite
index subgroup with the desired property.
\end{proof}

When $X$ is   assumed to be a square complex,  
a result similar  to the previous lemma was
proven in \cite{HaglundWise05}.

\begin{rem}\label{rem:engulf}
Let $G$ denote a group and let $H\subset G$ denote a subgroup
all of whose finite index subgroups are separable in $G$. Then
each finite
index subgroup of $H$
is the intersection with $H$ of
a finite index subgroup in
$G$.

Indeed, let $K\subset H$ denote a finite
index subgroup. Write $H=\cap_i G_i$
where each $G_i$ is a finite index
subgroup of $G$. Then we also have $K=\cap_i
(G_i\cap H)$. All subgroups 
$G_i\cap H$ are intermediate
between $K$ and $H$. Since
$K$ is of finite index in $H$ there are finitely many distinct
intersections $G_i\cap H$. The
corresponding intersection $\bigcap G_i$
yields a finite index subgroup of $G$
whose intersection with $H$ is
$K$.

This applies in particular when $X$ is a simply connected polygonal
complex and $\Gamma$ denotes a
uniform lattice of ${\rm
Aut}(X)$ such that each
finite index subgroup of a wall stabilizer
in $\Gamma$ is separable: each
finite index subgroup of a wall-stabilizer in $\Gamma$
is the intersection with the wall-stabilizer of
a finite index subgroup in
$\Gamma$.
 \end{rem}

\subsection{Killing a 2--cocycle along a wall}

\begin{thm}\label{thm:killcocyclwall}  Let
$X$ denote some locally compact simply connected polygonal
complex satisfying $(\mathrm{C^2})$. Let $\Gamma^0$
denote a uniform lattice in ${\rm
Aut}(X)$. Assume that $\Gamma^0$ has no
self-intersection, and  that each finite index
subgroup of a wall-stabilizer  contains the
intersection with the wall-stabilizer of a
finite index subgroup in
$\Gamma^0$.

  Let
$\Gamma\subset\Gamma^0$ denote a finite index 
normal subgroup. Let
$c\co \Pi\to \field A$ denote a $\Gamma$--invariant
2--cocycle on $X$.

For any wall $M$  there
is a finite index normal subgroup
$\Gamma'\subset \Gamma^0$ and a 
$\Gamma'$--invariant 2-cocycle $c'$ on $X$  such
that
\begin{enumerate}
\item $c'-c=\delta u$, where $u$ is a 
$\Gamma'$--invariant 1--cochain
\item if a polygon $\pi$ is not separated by any  translate
$\gamma M$ ($\gamma\in\Gamma^0$)
then  $c'(\pi)=c(\pi)$ 
\item
   if a polygon $\pi$ is separated by some (necessarily unique) translate
$\gamma M$ ($\gamma\in\Gamma^0$)
then   $c'(\pi)=0$.
\end{enumerate}
\end{thm}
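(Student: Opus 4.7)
The plan is to construct a $\Gamma'$-invariant 1-cochain $u$ supported on the edges of $X$ dual to the $\Gamma^0$-translates of $M$, and take $c' := c + \delta u$; then (1) holds tautologically. The key geometric input is no-self-intersection: distinct translates of $M$ have disjoint geometric walls, so any polygon $\pi$ is separated by at most one translate, and any edge of $\partial\pi$ dual to some translate of $M$ is in fact dual only to the unique one that separates $\pi$. Consequently (2) holds automatically whenever $u$ is supported on such edges, and (3) reduces to arranging, for each translate $\gamma M$, that $u$ on the edges of $\gamma E_M$ makes the cocycle cancellation equation $(c+\delta u)(\pi)=0$ hold for each polygon $\pi$ separated by $\gamma M$.

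The construction along a single wall is a linear problem on a tree. For a wall $W$ in $\Gamma^0\cdot M$ let $E_W$ denote the edges of $X$ dual to $W$ and $\Pi_W$ the polygons separated by $W$. By \fullref{lem:walltree}(4), the tree $\tau_W$ underlying the geometric wall has vertex set $E_W$ and edge set $\Pi_W$, each edge joining the two endpoints of the corresponding diameter. The requirements $(c+\delta u_W)(\pi)=0$ for $\pi\in\Pi_W$ form one affine equation over $\field{A}$ per edge of $\tau_W$ in the two unknowns at its endpoints; because $\tau_W$ is a tree, propagation from any basepoint shows that the affine space $U_W\subset\field{A}^{E_W}$ of solutions is nonempty and a torsor over a finite subgroup of $\field{A}$, hence \emph{finite}.

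The main obstacle is equivariance, and this is where finiteness of $U_W$ enters decisively. Set $N_W=\Gamma^0_W$ and $\Lambda_{0,W}=N_W\cap\Gamma$, which preserves $U_W$ since $c$ is $\Gamma$-invariant. The action $\Lambda_{0,W}\to\mathrm{Sym}(U_W)$ has finite image, so its kernel $\Lambda_W$ is a finite-index subgroup of $\Lambda_{0,W}$ acting trivially on $U_W$; pick any $u_W\in U_W$, which is then $\Lambda_W$-invariant. By the separability hypothesis (\fullref{rem:engulf}), there is a finite-index subgroup $\Gamma_{1,W}\subset\Gamma^0$ with $\Gamma_{1,W}\cap N_W\subset\Lambda_W$. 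Apply this to each $W$ in a finite set of $\Gamma$-orbit representatives of walls in $\Gamma^0\cdot M$ (finite by cocompactness of $\Gamma^0$), intersect the resulting $\Gamma_{1,W}$'s with each other, with their finitely many conjugates in $\Gamma^0$, and with $\Gamma$, to obtain a finite-index normal subgroup $\Gamma'\subset\Gamma^0$ contained in $\Gamma$ with $\Gamma'\cap N_W\subset\Lambda_W$ for every chosen $W$, and by normality of $\Gamma'$, for every $\Gamma^0$-conjugate of such a $W$.

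Finally, choose $\Gamma'$-orbit representatives $W_1,\dots,W_r$ of walls in $\Gamma^0\cdot M$, each a $\Gamma$-translate of one of the initial $\Gamma$-orbit representatives, with chosen $u_{W_i}\in U_{W_i}$. Define $u$ by $\Gamma'$-equivariant extension: for $\gamma\in\Gamma'$ and $e\in E_{W_i}$, let $u(\gamma e)$ be the value determined by $\gamma\cdot u_{W_i}$. Well-definedness on each orbit follows from $\Gamma'\cap N_{W_i}\subset\Lambda_{W_i}$ together with $\Lambda_{W_i}$-invariance of $u_{W_i}$; different $i$'s contribute to disjoint edges by no-self-intersection. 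For a polygon $\pi=\gamma\pi_0$ separated by $\gamma W_i$ with $\gamma\in\Gamma'\subset\Gamma$ and $\pi_0\in\Pi_{W_i}$, the $\Gamma$-invariance of $c$ yields $(c+\delta(\gamma\cdot u_{W_i}))(\pi)=\eta(\gamma,\pi_0)\,(c+\delta u_{W_i})(\pi_0)=0$, establishing (3). Combined with (2) from the first paragraph, this completes the verification.
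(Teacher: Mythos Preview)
Your argument is correct and matches the paper's approach: build the finite set $U_W$ of wall-supported solutions by tree propagation, use finiteness of the stabilizer action plus separability to drop to a normal finite-index $\Gamma'$, and extend equivariantly. The one organizational difference is that you apply separability at each $\Gamma$-orbit representative of walls in $\Gamma^0\cdot M$, whereas the paper stays at the single wall $M$ but ranges over \emph{all} $\Gamma$-invariant $2$-cocycles $d$ (so that the transported cocycles $\gamma_i^{-1}c$ for $\gamma_i\in\Gamma^0$ are automatically handled when choosing $\Gamma^0$-representatives $\gamma_i M$ of the $\Gamma'$-orbits); the two devices are interchangeable.
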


\begin{proof}
We first study the set $F=F(c,M)$ of 1--cochains
$u$ with the following properties:
\begin{enumerate}
\item ${\rm Supp}(u)$ is contained in the set
of edges which are dual to
$M$.
\item for any polygon $\pi$ separated by
$M$, if $\ovae$ and $\ovae\!'$ are the two oriented
edges of $\delta_v\pi$ such that $e$ and $e'$ are dual to $M$  then
$c(\pi)+u(e)\eta(\!\ovae\!)+u(e')\eta(\!\ovae\!')=0$.
(This condition
does not depend on the choice of the vertex $v\in\partial \pi$.)
\end{enumerate}

In view of the first condition, note that the
second condition reads $(c+\delta u)(\pi)=0$
for polygons $\pi$ that are separated by $\size{M\!}$. Note also that the pair
of oriented edges  $\ovae,\ovae\!'$ of the second
condition is well-defined because by \fullref{lem:walltree}, each
geometric wall intersects $\pi $ in a single diameter.

\begin{lem}
The set $F$ is finite, nonempty and invariant
under the stabilizer $\Gamma_M$ of $M$ in
$\Gamma$.
\end{lem}

\begin{proof}
To prove the two first assertions we
consider an edge $e$ of $X$ dual to $M$, with center $v$,
and we show that the map
$F\to \field A$ sending
$u\in F$ to $u(e)\in \field A$ is a bijection.

Consider a
combinatorial injective path
$c=(v_0=v,v_1,\dots,v_n=w)$ of the tree $T(M)$ whose first barycentric
subdivision is
$\size{M\!}$ (see \fullref{lem:walltree}). Any edge $(v_{i-1},v_{i})$ of
this path is contained in a single polygon
    $\pi _i$ of
$X$. We denote by $\overrightarrow{e_{i}}$ the oriented edge of $\delta_v\pi_i$ whose center is $v_{i-1}$, and we denote by $\overrightarrow{f_i}$ the oriented edge of $\delta_v\pi_i$ whose center is $v_{i}$ (this well-defined  independently of the choice of $v\in\partial \pi$). Observe that either $\overrightarrow{f_i}=\overrightarrow{e_{i+1}}$ or $\overrightarrow{f_i}={\overleftarrow{e_{i+1}}}$.

\begin{figure}[ht!]
\labellist
\small\hair 2pt
\pinlabel $v_0$ at 2 164
\pinlabel $\overrightarrow{e_1}$ at 0 107
\pinlabel $\overrightarrow{f_1}$ at 80 190
\pinlabel $\pi_1$ at 50 142
\pinlabel $v_1$ at 113 118
\pinlabel $\overrightarrow{e_2}$ at 99 80
\pinlabel $\pi_2$ at 165 99
\pinlabel $\overrightarrow{f_2}$ at 216 144
\pinlabel $v_2$ at 222 97
\pinlabel $\overrightarrow{e_3}$ at 272 141
\pinlabel $\pi_3$ at 314 99
\pinlabel $v_3$ at 367 104
\pinlabel $\overrightarrow{f_3}$ at 365 60
\pinlabel $\overrightarrow{e_4}$ at 417 139
\pinlabel $\pi_4$ at 460 96
\pinlabel $v_4$ at 512 161
\pinlabel $\overrightarrow{f_4}$ at 524 101
\endlabellist
\centering
\includegraphics[width=11cm]{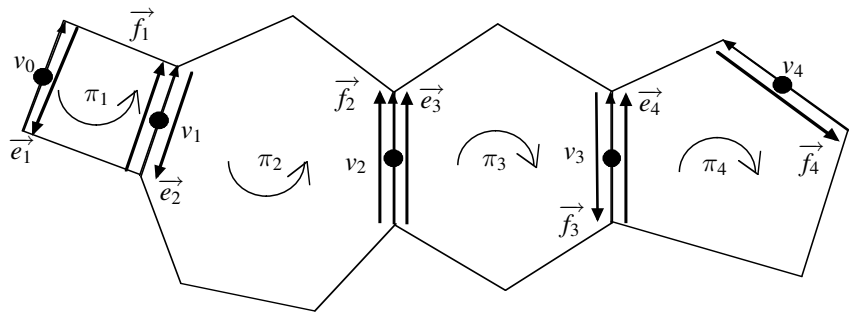}
\caption{A path in $T(M)$. Each vertex $v_i$ is indicated by $\bullet$ and each center 
of an oriented edge of the wall $M$ is indicated with an arrow. The oriented edges 
$\vec{e_{i}}$ and $\vec{f_{i}}$ 
are drawn inside the polygons $\pi_{i}$.}
\end{figure}

Then we claim that for any element $g$ of
$\field A$, there exists one and only one
sequence
$(g_0,\dots,
g_n)$ of elements of $\field A$ such that
\begin{enumerate}
\item $g_0=g$
\item for $1\le i\le n$ we have $c(\pi_i)+g_{i-1}\eta(\overrightarrow{e_i})+g_i\eta(\overrightarrow{f_i})=0$.
\end{enumerate}

Given  $g_{i-1}\in \field A$ the last equation is
always solvable in the unknown
$g_i\in \field A$ because $\eta$ takes its values in $\{-1,+1\}$.  So the
claim follows by induction on the length $n$
of the path $c$.

For any edge $e'$ of $X$ dual to ${M}$, with center $w$, let
$c=(v_0=v,v_1,\dots,v_n=w)$ denote the unique
injective combinatorial path in the tree
$T(M)$ from $v$ to $w$. Using the notation
above, we define
$u(e')$ by setting $u(e')={g_{n}}$.
For any edge $e''$ not dual to ${M}$
we set $u(e'')=0$. We thus obtain the unique
preimage of $g\in \field A$ in $F$ under the
restriction map
$F\to \field A$ (at the edge $e$).

It remains to check that $F$ is invariant
under $\Gamma_M$. For $u\in F$ and
$\gamma\in\Gamma_M$ clearly $\gamma u$ vanishes outside the set edges
dual to ${M}$.  Since    $c$
is $\Gamma$--invariant, the calculations made  in
\fullref{exmp:calculor} show  that the second relation defining
elements of $F$ is still satisfied by $\gamma u$.
\end{proof}

We construct $c',\Gamma'$ using
the action of $\Gamma_M$ on the sets $F(c,M)$.

\smallskip
Since $F(c,M)$ is finite and $\Gamma_M$ acts on
it, a finite index subgroup $\Lambda\subset
\Gamma_M$ fixes $F(c,M)$ pointwise.  Since $[\Gamma^0_M:\Gamma_M]<+\infty$, there exists by
assumption a finite index subgroup
$\Gamma(c,M)\subset 
\Gamma^0$ such that
$\Gamma(c,M)\cap\Gamma_M
\subset\Lambda$, that is the wall-stabilizer
$\Gamma(c,M)_M$ acts trivially  on
$F$. We may even assume that $\Gamma(c,M)$ is
normal in
$\Gamma^0$ and contained in $\Gamma$.

Consider all  $\Gamma$--invariant 2--cocycles 
$d$. Observe that there are finitely many such
$d$, because of
$\Gamma$--invariance ($\Gamma$ is cocompact). 
For each such $d$ we consider as above a finite
index subgroup
$\Gamma(d,M)\subset \Gamma$, which is normal  in $\Gamma^0$,  such that
the stabilizer of
$M$ in
$\Gamma(d,M)$ acts trivially on $F(d,M)$.
Then the intersection of all such 
$\Gamma(d,M)$ is a finite index normal
subgroup $\Gamma'$ of
$\Gamma^0$, $\Gamma'\subset
\Gamma$, such that ${\Gamma'}_M$ acts 
trivially on all
sets $F(d,M)$.

 \smallskip
   Choose elements
$\gamma_1=1,\gamma_2,\dots,\gamma_m\in
\Gamma^0$ such that any wall $\gamma M$ (with
$\gamma\in\Gamma^0$) is equivalent modulo
$\Gamma'$ to a unique $\gamma_i M=M_i$. We now
define a $\Gamma'$--invariant 1--cochain
$u$.
\smallskip

Pick  elements $u_i\in
F(c,M_i)$. We first check  that $u_i$ is invariant
under ${\Gamma'}_{M_i}$.  Using   the calculations made in \fullref{exmp:calculor}, we see that ${\gamma_i}$ sends
$F({\gamma_i}^{-1}c,M)$ to
$F(c,{\gamma_i}M)$. Observe  that
${\gamma_i}^{-1}c$ is invariant under
$\Gamma$ (because $\Gamma$ is normal in $\Gamma^0$). So by construction ${\Gamma'}_{M}$
fixes $F({\gamma_i}^{-1}c,M)$. Thus
${\gamma_i}{\Gamma'}_{M}{\gamma_i}^{-1}$ fixes
$F(c,{\gamma_i}M)$. But $\Gamma'$ is normal in
$\Gamma^0$, so ${\gamma_i}{\Gamma'}_{M}{\gamma_i}^{-1}=
{\Gamma'}_{M_i}$.

For every $\Gamma^0$--translate $\gamma M$, $\gamma\in\Gamma^0$, we
now choose some
$\gamma'\in\Gamma'$ such that $\gamma'
\gamma_i M$ equals $\gamma M$ with $\gamma'=1$ if
$\gamma M$ is $\gamma_i M$ for some $i$.

For an edge $e$ not dual to a wall of
the form  $\gamma{M}$ for
$\gamma\in\Gamma^0$, we set $u(e)=0$.

For an edge $e$  dual to $\gamma_i {M}$
we set $u(e)=u_i(e)$. Observe that by the
absence of self-intersection $e$ is not dual
to another
$\gamma_j {M}$.

More generally if the  edge $e$ is dual to a 
wall of the form  $\gamma  {M}$ for
$\gamma\in\Gamma^0$ where $\gamma M=\gamma'\gamma_iM$,  we set
$u(e)=(\gamma'u_i)(e)$. Again
$e$ is not dual to $\gamma''{M}$ not equal to $\gamma M$.

The 1--cochain $u$ is $\Gamma'$--invariant because each $
{\Gamma'}_{M_i}$ fixes $u_i$.

We set $c'=c+\delta u $: this new 2--cocycle is
$\Gamma'$--invariant ($\Gamma'\subset\Gamma$).

We now compute  $c'(\pi)$ for some polygon 
$\pi$.

If $\pi$ is not separated by a wall of  the
form $\gamma M$ ($\gamma
\in \Gamma^0$) then for each  edge $e\subset\partial \pi$ we have
$u(e)=0$. Thus $c'(\pi)=c(\pi)$.

Assume now that $\pi$ is separated by at least one $\Gamma$--translate
of $M$. Since distinct $\Gamma$--translates of
$M$ have disjoint associated geometric walls,
in  fact there exists a unique translate
$\gamma M$ which separates
$\pi$.   On the set of edges dual
to $\gamma M$ the 1--cochain $u$ coincides with an element of
$F(c,\gamma M)$. Thus
$c'(\pi)=0$.

The 2--cocycle $c'$ is trivial on a  
$\Gamma^0$--invariant set of polygons, and
$c'=c$ on the  $\Gamma^0$--invariant complement
of  this set. Thus $c'$ is $\Gamma$--invariant
(but it is cohomologous to $c$ only by the
coboundary of a $\Gamma'$--invariant 1--cochain).
\end{proof}

\subsection{Killing a 2--cocycle in a finite cover}

\begin{thm}\label{thm:killcocycl}
Let $X$ denote a compact polygonal complex
satisfying the nonpositive curvature 
condition $(\mathrm{C^2})$.  Assume that finite index
subgroups of wall stabilizers are separable in
$\Gamma=\pi _1(X)$.   Then for any 2--cocycle 
$c$ on
$X$, there is a (regular) finite cover 
$p\co X'\to X$ such that the lift $p^*(c)$ is
cohomologous to zero.
\end{thm}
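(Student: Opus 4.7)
The plan is to work in the universal cover $\tilde X$ and iterate Theorem \ref{thm:killcocyclwall} across the finitely many orbits of walls, then descend the result to a regular finite cover of $X$.

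First I would lift $c$ to a $\Gamma$-invariant 2-cocycle $\tilde c$ on $\tilde X$, where $\Gamma = \pi_1(X)$ acts as deck transformations. Applying Lemma \ref{lem:virtclean} (wall stabilizers are separable by hypothesis) produces a finite index subgroup of $\Gamma$ acting without self-intersection; taking its normal core yields a finite index normal subgroup $\Gamma^0 \subset \Gamma$ with the same property. Separability passes from $\Gamma$ to $\Gamma^0$ (if $K = \bigcap_i G_i$ in $\Gamma$ with each $G_i$ of finite index, then $K = \bigcap_i (G_i \cap \Gamma^0)$ in $\Gamma^0$), so finite index subgroups of wall-stabilizers in $\Gamma^0$ are separable in $\Gamma^0$, and Remark \ref{rem:engulf} provides precisely the engulfing condition required by Theorem \ref{thm:killcocyclwall} with ambient group $\Gamma^0$.

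By cocompactness of the action, the set of walls of $\tilde X$ has finitely many $\Gamma^0$-orbits; choose representatives $M_1, \ldots, M_N$. I would now construct by induction a descending chain of finite index normal subgroups $\Gamma^0 = \Gamma_0 \supset \Gamma_1 \supset \cdots \supset \Gamma_N$ of $\Gamma^0$ together with 2-cocycles $\tilde c = c_0, c_1, \ldots, c_N$ such that $c_k$ is $\Gamma_k$-invariant, differs from $\tilde c$ by the coboundary of a $\Gamma_k$-invariant 1-cochain $U_k$, and vanishes on every polygon separated by some $\Gamma^0$-translate of any of $M_1, \ldots, M_k$. At step $k$, I apply Theorem \ref{thm:killcocyclwall} to $(\Gamma^0, \Gamma_{k-1}, c_{k-1}, M_k)$ and intersect the output subgroup with $\Gamma_{k-1}$ to obtain $\Gamma_k$. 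The invariance (previously-killed polygons stay killed) follows directly from property~(2) of that theorem, which preserves $c_{k-1}(\pi)$ whenever $\pi$ is not separated by any $\Gamma^0$-translate of $M_k$; polygons separated by some such translate are set to zero by property~(3).

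The observation making the iteration terminate is that every polygon $\pi$ is separated by at least one wall: for any oriented edge $\vec a$ with $a \subset \partial\pi$, the wall through $\vec a$ also contains the unique oriented edge of $\partial\pi$ parallel to $\vec a$ inside $\pi$. Hence after $N$ steps every polygon is separated by some $\Gamma^0$-translate of some $M_k$, and $c_N \equiv 0$. The identity $\tilde c + \delta U_N = c_N = 0$ then gives $\tilde c = -\delta U_N$ with $U_N$ being $\Gamma_N$-invariant. Finally I would replace $\Gamma_N$ by its normal core $\Gamma'$ in $\Gamma$, which is a finite index normal subgroup of $\Gamma$ contained in $\Gamma_N$: the 1-cochain $U_N$ remains $\Gamma'$-invariant, descends to a 1-cochain $u'$ on the regular finite cover $X' = \Gamma' \backslash \tilde X$, and gives $p^*(c) = -\delta u'$. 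The serious conceptual work has been absorbed into Theorem \ref{thm:killcocyclwall}; here the only real obstacle is organizing the induction so that the termination criterion holds, which is transparent from the explicit description of $c_k$ versus $c_{k-1}$.
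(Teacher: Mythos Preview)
Your proof is correct and follows essentially the same route as the paper: lift to $\tilde X$, pass to a finite index subgroup without self-intersection, iterate Theorem~\ref{thm:killcocyclwall} across the finitely many wall orbits so that the modified cocycle eventually vanishes on every polygon, and descend to a finite cover. The only cosmetic difference is that the paper replaces $\Gamma$ outright by the no-self-intersection subgroup (calling it $\Gamma$ again and setting $\Gamma^0=\Gamma$), whereas you keep the original $\Gamma$ and take a normal core at the end to guarantee regularity of $X'\to X$; your bookkeeping is in fact slightly more careful on this point.
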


\begin{proof}
First note that each basic oriented edge or polygon of $X$ 
lifts to the universal cover $\tilde
p\co \widetilde X\to X$. We thus endow  $\widetilde
X$ with a natural structure of polygonal
complex such that  $\tilde p$ is combinatorial
and in fact sends basic  oriented edges or
polygons  to basic  oriented edges or polygons.
Thus the deck transformation group $\Gamma$
also sends basic  oriented edges or polygons 
to basic  oriented edges or polygons. (In
particular for any $\gamma\in\Gamma$ and any
edge $e$ or any polygon $\pi$ we have
$\eta(\gamma,e)=\eta(\gamma,\pi)=1$.)

First by
\fullref{lem:virtclean}  there is a
finite index subgroup of $\Gamma$  acting
without self-inter\-section. So we may and will
assume that
$\Gamma$ acts
without self-intersection.

We denote by $\tilde c$  the 2--cocycle on
$\widetilde X$ preimage of $c$ under the
universal cover $\widetilde X\to X$.

By construction $\tilde c$ is 
$\Gamma$--invariant.

We choose walls $M_1,\dots,M_n$ such  that any
wall of $\widetilde X$ is a
$\Gamma$--translate of a unique wall  of the
family $\{M_1,\dots,M_n\}$.

We first set $\Gamma^0=\Gamma$. Using \fullref{rem:engulf}, we see
that we may apply \fullref{thm:killcocyclwall}  to the
wall $M_1$. Thus we find a finite index normal
subgroup
$\Gamma^1\subset\Gamma$, a 1--cochain $u^1$ and
a 2--cocycle $c^1$ such that $u^1,c^1$ are
$\Gamma^1$--invariant,  $c^1=\tilde c+\delta
u^1$, and for every polygon $\pi$, either $\pi$
is separated by no $\Gamma$--translate and
$c^1=\tilde c$ on
$\pi$, or  $\pi$ is separated by some 
$\Gamma$--translate
$\gamma M$ and then
$c^1(\pi)=0$.

We apply again 
\fullref{thm:killcocyclwall} to the
subgroup
$\Gamma^1$, the 2--cocycle $c^1$  and the wall $M_2$, thus  getting a finite
index normal subgroup $\Gamma^2\subset \Gamma$ with $\Gamma^2\subset
\Gamma^1$, together with a 1--cochain $u^2$ and a 2--cocycle
$c^2=c^1+\delta u^2$  which is trivial along
$\Gamma$--translates of $M_2$.  If we repeat
this process until we reach the
$\Gamma$--translates of $M_n$, we obtain a
decreasing sequence of normal subgroups
$\Gamma^1,\dots,\Gamma^n$ (each  of finite
index in $\Gamma$),  together with  sequences of
1--cochains,
$u^1, \dots,u^n$ and 2--cocycles $c^1,
\dots,c^n$  with $u^i,c^i$ invariant under
$\Gamma^i$, and $c^{i+1}=c^i+\delta u^{i+1}$.

We claim that $c^n=0$.

Indeed consider a polygon $\pi$ with ${k}$ 
edges. The walls separating $\pi$ are in the
$\Gamma$--orbit of
$M_{i_1},\dots,M_{i_{k}}$ with $i_1<
\cdots<i_{k}$. We denote by ${N}_j$ the
$\Gamma$--translate of $M_{i_j}$ which separates
$\pi$.

   For $i\not\in\{i_1,\dots,i_{k}\}$ we know that
$c^i=c^{i-1}$ on  $\pi$  (by convention
  $c^0=\tilde c$), and for $i\in\{i_1,\dots,i_{k}\}$ we know that
$c^i(\pi)=0$. Thus $c^n(\pi)=0$.

Let  $p\co X'\to X$ denote the  (regular) covering
corresponding to the normal subgroup
$\Gamma^n\subset \pi_1(X)$. The lift $p^*(c)$
lifts to $\tilde c$ in $\widetilde X$.   Note
that $\tilde c,u^1,\cdots,u^n$ are all
invariant under $\Gamma^n$. Thus each $c^i$ is 
$\Gamma^n$--invariant  and defines a 2--cocycle
$c_i$ on $X'$. Furthermore $c_i$ is
cohomologous to $c_{i-1}$ on $X'$. As a
conclusion $p^*(c)=c_0$ is cohomologous to
$c_n=0$.
\end{proof}

\begin{thm}\label{thm:virttrivextbis}
Let $X$ denote a compact polygonal complex 
satisfying  the nonpositive curvature
condition $(\mathrm{C^2})$.  Assume that finite index
subgroups of wall-stabilizers are separable in
$\Gamma=\pi _1(X)$.

Then
for any extension $1\to G\to 
\wbar{\Gamma}\to \Gamma\to 1$ where $G$  is
finite, the group 
$\wbar{\Gamma}$ is commensurable with 
${\Gamma}$.
\end{thm}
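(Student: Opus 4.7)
The plan is to reduce the given extension to a central one by a finite abelian group, realize it by a combinatorial $2$--cocycle on $X$, kill that cocycle in a finite cover using \fullref{thm:killcocycl}, and conclude from the resulting virtual splitting.

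First I would pass to the kernel $K$ of the conjugation action $\bar\Gamma\to\mathrm{Aut}(G)$; since $\mathrm{Aut}(G)$ is finite, $K$ has finite index in $\bar\Gamma$, its image $\Gamma_1$ has finite index in $\Gamma$, and $K\cap G=Z(G)$. Setting $A=Z(G)$, I obtain a central extension $1\to A\to K\to \Gamma_1\to 1$ with $A$ finite abelian. The separability of finite index subgroups of wall-stabilizers in $\Gamma$ passes to $\Gamma_1$, and the finite cover of $X$ corresponding to $\Gamma_1$ still satisfies $(\mathrm{C^2})$, so I may replace $X,\Gamma,\bar\Gamma$ by their analogues for $\Gamma_1$ and assume from the outset that the given extension is central with finite abelian kernel $A$.

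Next, by \fullref{lem:condcat0} the $(\mathrm{C^2})$--metric is locally $\CAT(0)$, hence the universal cover $\widetilde X$ is contractible and $X$ is a $K(\Gamma,1)$. Therefore $H^2(\Gamma,A)\cong H^2(X,A)$, and the class classifying the central extension is represented by a cellular $2$--cochain $c\co \Pi\to A$ in the sense of \fullref{sec:polycompl}, the cocycle condition being automatic since $\dim X=2$. I would then invoke \fullref{thm:killcocycl}, whose hypotheses match those at hand, to produce a regular finite cover $p\co X'\to X$ such that $p^*c$ is a coboundary on $X'$.

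Finally, writing $\Gamma'=\pi_1(X')$ and letting $K'\subset\bar\Gamma$ be the preimage of $\Gamma'$, the restriction of the central extension to $\Gamma'$ is classified by $[p^*c]=0\in H^2(\Gamma',A)$ and hence splits: $K'\cong\Gamma'\times A$. Consequently $\Gamma'$ is (up to isomorphism) a common finite index subgroup of $\bar\Gamma$ and of $\Gamma$, yielding the commensurability. The main obstacle is already settled by \fullref{thm:killcocycl}; the remaining subtlety is the dictionary between the cohomology class of a central extension and the combinatorial $2$--cochain of \fullref{sec:polycompl}, which amounts to a routine check of the orientation conventions fixed there.
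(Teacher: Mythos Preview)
Your proof is correct and follows essentially the same route as the paper: the reduction to a central extension via the centralizer of $G$ in $\bar\Gamma$ (your kernel $K$ of the conjugation action is exactly the paper's centralizer $Z$), the identification $H^2(\Gamma,A)\cong H^2(X,A)$ via contractibility of $\widetilde X$, the appeal to \fullref{thm:killcocycl}, and the resulting splitting over the finite-index subgroup are all as in the paper. Your treatment is in fact slightly more explicit about why the hypotheses pass to the finite cover corresponding to $\Gamma_1$.
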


\begin{proof}
Let  $\pi\co \wbar{\Gamma}\to \Gamma$ denote a
morphism whose finite kernel is $G$.

Let us denote by $Z$ the centralizer of $G$ in
$\wbar{\Gamma}$. Since $G$ is normal and finite $Z$ is
a finite index subgroup of $\wbar{\Gamma}$.
So $\pi(Z)$ is a finite index subgroup of
$\Gamma$ and the kernel of $\pi\co  Z\to \pi(Z)$
is central in $Z$. This means that we may
restrict ourselves to central extension $1\to \field A\to
\wbar{\Gamma}\to \Gamma\to 1$ 
($G=\field A$ is now a finite abelian group and $\field A$ is
contained in the  center of
$\wbar{\Gamma}$).

 Central extensions of $ \Gamma$ by $\field A$ correspond, up to
isomorphisms of extensions, to cohomology
classes of $H^2( \Gamma,\field A)$ ($ \Gamma$ acts trivially on $\field A$) (see Brown \cite[Theorem 3.12, page 93]{Brown82}).
  Since $\widetilde X$ is $\CAT(0)$ it is contractible, $X$ is an Eilenberg--MacLane space for $ \Gamma$. Thus  $H^2( \Gamma,\field A)\simeq H^2( X,\field A)$, the group 
  of 2--cocycles on $X$ modulo the subgroup of coboundaries of 1--cocycles, and the product
extension $1\to \field A\to \field A\times {\Gamma}\to
\Gamma\to 1$ corresponds to the trivial
cohomology class. For convenience we recall here very briefly the identification between the set of central extensions of $ \Gamma$ by $\field A$ and the group $H^2(X,\field A)$.

Suppose first we are given some central extension $1\to \field A\to
\wbar{\Gamma}\to \Gamma\to 1$.   The inclusion $(X^1,x)\to (X,x)$
induces a surjective map $F=\pi_1(X^1,x)\to \Gamma=\pi_1(X,x)$.  The
kernel $R$ of this map is the subgroup generated by conjugates of elements
$\gamma_{\pi}$ in $\pi_1(X^1) = \sigma_{\pi}\cdot\delta_v\cdot\pi{\sigma_{\pi}}^{-1}$
where $\pi$ denote any polygon of $X$, and $\sigma_{\pi}$ is a path in
$X^1$ from the basepoint $x$ to some chosen vertex $v$ of $\partial
\pi$. Since $F$ is free and $\wbar{\Gamma}\to \Gamma$ is onto,  the
quotient morphism $F\to \Gamma$ has a lift to $\wbar{\Gamma}$. Choose
any lift $c\co F\to\wbar{\Gamma}$ of $F\to \Gamma$. Any element $r\in R$
maps to an element $c(r)\in \field A$ under this lift. In particular we may
associate to any polygon $\pi$ of $X$ an element $c(\pi)=c(\gamma_{\pi})\in
\field A$ (independent of the choice of $\sigma_{\pi}$ because $\field A$ is central). A
different choice of lift leads to a cohomologous 2--cocycle. Note that
$c=0$ means that the morphism $c\co F\to \wbar{\Gamma}$ is trivial on
$K$, thus defines a section of $\wbar{\Gamma}\to \Gamma$. Then
$\wbar{\Gamma}$ is a semidirect product, hence a direct product
since $\field A$ is central.

Conversely let $c$ denote some 2--cocycle on $X$ with values in $\field A$. In the free product $\field A*F$ 
consider the normal subgroup $N$ generated by the elements
$c(\gamma_{\pi})^{-1}\gamma_{\pi}$ and the  
commutators $[g,f], g\in \field A,f\in F$. Set $\wbar{\Gamma}=F/N$. Extend
trivially on $\field A$ the natural map $F\to \Gamma$, then $\field A*F\to \Gamma$
defines a surjective morphism $\wbar{\Gamma}\to \Gamma$. Furthermore
the natural inclusion $\field A\to \field A*F$ defines an inclusion $\field A\to
\wbar{\Gamma}$ whose image is central. We have an exact sequence
$1\to \field A\to \wbar{\Gamma}\to \Gamma\to 1$, whose 2--cocycle is $c$. A
cohomologous 2--cocycle $c'$ leads to an isomorphic central extension.

Given a central extension $1\to
\field A\to \wbar{\Gamma}\to \Gamma\to 1$, let $c$ denote the associated 2--cocycle  on $X$.
 By \fullref{thm:killcocycl}, there is
a regular finite cover $p\co X'\to X$ such that
$p^*(c)$ is cohomologous to 0. Let $\Gamma'$
denote the finite index normal subgroup of
$\Gamma$ corresponding to the covering $X'\to
X$. Let $\overline{\Gamma'}$ denote the
preimage of $\Gamma'$ under
$\wbar{\Gamma}\to \Gamma$. Then $p^*(c)$ is
the 2--cocycle on $X'$ corresponding to the central
extension $1\to \field A\cap \overline{\Gamma'} \to
\overline{\Gamma'}\to \Gamma'\to 1$. This
latter extension is trivial, hence
$\overline{\Gamma'}$ contains a finite index
subgroup mapping isomorphically onto
 $\Gamma'$.
\end{proof}

\begin{proof}[Proof of
\fullref{thm:virttrivext}]
It suffices to note that if $X$ satisfies
$(\mathrm{C^2})$ the geometric walls are convex subtrees
by \fullref{lem:walltree}. Thus the wall-stabilizers  and their finite
index subgroups are convex subgroups, hence separable by assumption. It
remains to apply \fullref{thm:virttrivextbis}.
\end{proof}

\part{III\qua Commensurability in some Coxeter--Davis\newline complexes}

In the first part of this paper we studied type-preserving
uniform lattices of the right-angled building associated with a graph product of finite groups.
In this part we consider a two-dimensional Coxeter group, and we are interested in the uniform lattices
of the polygonal complex associated with the Coxeter group. The results of this part are quite similar to the results of part 1, the method is rather analogous to the method used in part 2.

In all of this part we will denote by $L$ some
simplicial graph. We denote by $I$ the set of
vertices of
$L$ and by $E$ the set of its edges. We let
$\mu$ denote the girth of $L$, that is the least
length of a simple closed combinatorial path in
$L$. We consider a map $m$ assigning to each
edge $e$ of $L$ an integer $m(e)\ge 2$.

We consider the Coxeter presentation
$$\<s_i,i\in I\,\vert\,  {s_i}^2=1,\ \forall i\in I;(s_is_j)^{m(e)}=1,\
\forall e\in E\ {\rm with}\ \partial e=\{i,j\!\}\>$$ and denote by
$(W,S)$ the associated Coxeter system where $W$ is the group and 
$S$ is the generating set $\{s_i,\,i\in I\}$.  For $i,j\in J,\ i\neq j$ we set $m_{ij}=m(e)$ if there exists
an edge $e\in E$ with  $\partial e=\{i,j\!\}$, and otherwise we set $m_{ij}=\infty$.
For any subset $J\subset
I$  we denote by $W_J$ the subgroup of $W$ generated by
$\{s_j\}_{j\in J}$. We recall that the natural map 
$$\<s_i,i\in J\,\vert\, 
{s_i}^2=1,\ \forall i\in J;(s_is_j)^{m(e)}=1,\
\forall e\in E\ {\rm with}\ \partial e=\{i,j\!\},i\in J,j\in J\>\to W_J$$ is
an isomorphism, so $(W_J,J)$ is also a Coxeter system. For this and other
general results on Coxeter systems we refer to Bourbaki \cite{Bourbaki68}.

\section{Coxeter--Davis complexes}\label{sec:coxdavcomplex}

In this section we associate to the Coxeter system a geometric realization. As a
building block we first introduce a simplicial complex of dimension
two: $B_*=x_**L'$. So $B_*$ is obtained as the join of a single vertex $x_*$ with the first
barycentric subdivision of $L$. We set $\partial B_*=L'\subset B_*$.
\textit{The facets of $B_*$\/} are the subcomplexes $\phi_i$, stars of $i$
in $\partial B_*$. Note that the facets cover $\partial B_*$.
For each point $p\in B_*$ let $\tau(p)$ denote the
smallest simplex of $B_*$ containing $p$. We then define $t(p)\subset
I$ as the set of $i\in I$ such that $\tau(v)\subset \phi_i$.
    Now we define an equivalence relation $\sim$ on $W\times B_*$ by
declaring $(w,p)\sim (w',p')\iff p=p'\in \partial B_*$ and
$w^{-1}w'\in W_{t(p)}$

We will denote by $A=A(W,B_*)$ the quotient of
$W\times B_*$ by $\sim$ and we will denote by $[w,p]$ the image in $A$ of
$(w,p)$. Note the analogies with the first part: ${\mathcal G}\leftrightarrow \nobreak L,{\Gamma}\leftrightarrow\nobreak W, C_*\leftrightarrow B_*,\Delta\leftrightarrow A$.

In the  following proposition we list some well-known properties
of $A$ \cite{HaglundReseaux} and we introduce some
notation.

\begin{prop}\label{prop:propapart}
\begin{enumerate}
\item The left action of $W$ on $W\times B_*$ induces an action of $W$ on
$A$.
\item The second projection $W\times B_* \to B_*$ induces a
$W$--invariant map $\rho\co A\to B^*$, and $\rho(p)=\rho(q)$ if and only if
$p,q$ belong to the same $W$--orbit. 

For $p\in A$ we set $t(p)=t(\rho(p))$ (the type of $p$)
and ${\rm rk}(p)=\size{t(p)}$ (the rank of $p$).

\item The map $j$ sending $p\in B^*$ to $[1,p]$ satisfies $\rho j={\rm
id}_{B^*}$. Thus we will identify $B^*$ with its image $j(B^*)$, and
$B^*$ is then a strict fundamental domain for the action of $W$ on $A$
(meaning that each $W$--orbit intersects $B^*$ in a single point).

\item The quotient $A$ admits a unique structure of  simplicial complex
such that the natural map  $W\times B_*\to A$ is combinatorial. The
$W$--translates $wB^*$ are {\rm the blocks
of $A$}, and for any facet $\phi_i$ of $B^*$ with center $v$ and any $w\in
W$ we call $w\phi_i$ a {\rm facet of $A$} with center $wv$. The vertex
$wx_*\in wB_*$ is called the {\rm center} of the block $wB_*$.

For $J\subset \{0,1,2\}$ we will denote by $A^J$ the full subcomplex of
$A$ whose vertices $v$ satisfy $t(v)\in J$.

\item For each $p\in B_*$ the stabilizer of $p$ in $W$ is $W_{ t(p)}$. In
particular $W$ acts simply transitively on the blocks of $A$. If two
distinct blocks share a rank--1 vertex $v$, then their intersection is the
facet with center $v$.

\item Let $v$ denote a vertex of $A$ of type $J$ and rank $r$. Then $r\le
2$. If $r=0$ then $v$ is the center of a     unique block of $A$, and ${\rm
link}(v,A)$ is isomorphic to $\partial B^*$. If $r=1$ then $v$ is the
center of a unique facet of
$A$, and ${\rm link}(v,A)$ is the suspension of the boundary of the
facet.  If
$r=2$ then
$J=\{i,j\!\}$ with
$m_{ij}<\infty$ and
${\rm link}(v,A)$ is a cyclic graph with $m_{ij}$ vertices of type
$\{i\!\}$,
$m_{ij}$ vertices of type $\{j\}$  and $2m_{ij}$ vertices of type
$\emptyset$.

\item There exists a unique regular polygonal complex $X=X(W,B_*)$ whose
bary\-centric subdivision is $A$ in such a way that the rank of the center
of a face of $X$ is the dimension of the face, that is, the rank in $A$
coincides with the rank of barycentric subdivisions. For any vertex $x$
of $X$ we will denote by $B(x)$ the block of $A$ with center $x$.

\item $A$ is simply connected.
\end{enumerate}
\end{prop}

\begin{rem}
By \fullref{rem:autreg}, the group  ${\rm Aut}_{\rm rk}(A)$ of
combinatorial automorphisms of $A$ preserving the rank in fact coincides
with ${\rm Aut}(X)$.
\end{rem}

\begin{defn}
The Coxeter system $(W,S)$ is  \textit{two-dimensional\/} whenever $W_J$ is
infinite for each subset $J\subset I$ with ${\size J}\ge 3$.
\end{defn}

\begin{lem}\label{lem:condnpc}
\begin{enumerate}
\item If the   Coxeter system $(W,S)$ is {two-dimensional} then the
polygonal complex $X$ satisfies the nonpositive curvature condition
$(\mathrm{C^2})$.

\item If the   Coxeter system $(W,S)$ is { two-dimensional} and $I$ does
not contain any subset $\{i,j,\ell\}$ with $m_{ij}\le 3,m_{j\ell}=3,m_{\ell
i}<+\infty$, then $X$ satisfies the  condition ($\mathrm{C^4}$).

\item If the  girth $\mu$ of $L$ is at least $4$ then $(W,S)$ is
two-dimensional and $X$ satisfies  condition (Q).
\end{enumerate}
\end{lem}

\begin{proof}
(1)\qua In the case of $X$,  the nonpositive curvature condition $(\mathrm{C^2})$
reads as follows:
$$\text{If }
c=(\overrightarrow{e_1},\dots,
\overrightarrow{e_n}) \text{ is a cycle of } L \text{ then } 
\sum_i \Big(\frac{1}{2}-\frac{1}{2m_{e_i}}\Big)\ge 1.
$$ Since $m\ge 2$ this condition is obviously  fulfilled for any cycle
of length $n\ge 4$. So assume that $c$ has length 3. Let $J$ denote the
set of vertices of $L$ belonging to one of the edge of $c$. The subgroup
$W_J$ must be infinite by the condition on dimension. But $W_J$ is a
triangle group, so being infinite implies
$\unfrac{1}{m(e_1)}+\unfrac{1}{m(e_2)}+\unfrac{1}{m(e_3)}\le 1$, which is
equivalent to
$(\mathrm{C^2})$ in this particular case.

\medskip (2)\qua The  condition ($\mathrm{C^4}$) reads as follows:
$$\text{If }
c=(\overrightarrow{e_1},\dots,
\overrightarrow{e_n}) \text{ is a cycle of } L \text{ then }
\sum_i \Big(\frac{1}{2}-\frac{1}{4f_{e_i}}\Big)\ge 1
$$ where $4f(e_i)= 2{m(e_i)}$ if $m(e_i)$ is even, and 
$4f(e_i)={2m(e_i)-2}$ otherwise.

Again this relation is obviously fulfilled   for any cycle of length
$n\ge 4$. So assume that $c=(\overrightarrow{e_1},\overrightarrow{e_2},\overrightarrow{e_3})$ has length 3. Let $J$ denote the set of
vertices of $L$ belonging to one of the edges of $c$. 
We may assume that $m(e_1)\le m(e_2)\le m(e_3)$. Necessarily $m(e_3)>3$, otherwise the group $W_J$ is either finite commutative or has one $m(e_i)$ equal to $3$, another $m(e_i)$ at least $3$ and the last one is finite. Similarly $m(e_2)>2$, otherwise the group $W_J$ is finite, and in fact  $m(e_2)>3$ by the hypotheses. Then 
$\sum_i (\unfrac{1}{2}-\unpfrac{1}{4f_{e_i}})\ge (\unfrac{1}{2}-\unfrac{1}{4})+(\unfrac{1}{2}-\unfrac{1}{8})+(\unfrac{1}{2}-\unfrac{1}{8})=1$. 

\medskip \noindent (3)\qua If the girth $\mu$ of $L$ is $\ge 4$ then in any subset $\{i,j,\ell\}\subset I$ one of the quantities  $m_{ij},m_{j\ell},m_{\ell i}$ is infinite, hence $(W,S)$ is two-dimensional.  Condition (Q) is trivially fulfilled.
\end{proof}

\begin{exmp}\label{exmp:mconstant}
Assume that the function $m\co I\to \{2,3,\dots\}$ is constant. Then the
polygonal complex $X$ in \fullref{prop:propapart} is the complex
$X(m,L)$ of \fullref{intro}. The condition  ``$m\ge 3$ or $m=2$ and the
girth of $L$ is at least $4$'' is equivalent to $(W=W(m,L),S)$ being two-dimensional.

When this condition is fulfilled by \fullref{lem:condnpc}, we see that $X$ satisfies the condition
$(\mathrm{C^2})$, hence also the (more natural) nonpositive curvature condition
(C): equipped with a piecewise Euclidean length metric turning each 
polygon to a regular Euclidean polygon, $X$ is $\CAT(0)$.

Observe that the condition ``$X(m,L)$ is negatively curved'' of \fullref{intro}, that is,
$m\ge 4$, or $m\ge 3$ and the girth of $L$ is at
least $4$, or $m\ge 2$ and the girth of $L$ is
at least $5$, is stronger than the two-dimensionality condition.  Furthermore it is clear that under these hypotheses, $I$ does
not contain any subset $\{i,j,\ell\}$ with $m_{ij}\le 3 ,m_{j\ell}=3,m_{\ell
i}<+\infty$. So condition ($\mathrm{C^4}$) is satisfied.
 In fact the condition is  equivalent to condition ($\mathrm{C^4}$') for complexes $X(m,L)$). Thus by  \fullref{lem:condcat0} the terminology 
 ``negatively curved'' is justified.
\end{exmp}

\section{Reflections, walls and $e$--walls}\label{sec:ewalls}

   \begin{defn}[Reflections] The reflections  of
$W$ are the conjugate of the generating
involutions $s_i,i\in I$. The fixed point set
of a reflection $r=ws_iw^{-1}$ on $A$ is \textit{the geometric wall of $r$\/}, denoted by $M(r)$.
\end{defn}

\begin{lem}
   The geometric wall of a reflection $r$ is
the geometric wall associated to some wall of the
polygonal complex $X$ (see \fullref{defn:wall}).
The geometric wall of a reflection $r$ is a separating
totally geodesic subtree of $A$.
    \end{lem}

\begin{proof}
Since $W$ acts simply transitively on vertices of $X$, the fixed point
set $M(r)$ is contained in the graph $A^{\{1,2\}}$. Let us check that for
any polygon $\pi $ of $X$ either $\pi \cap M(r)=\emptyset$ or $\pi \cap
M(r)$ is a diameter of $\pi $ (see \fullref{defn:wall}). Note that
$M(r)$ is the fixed point set of an isometry of $C$, hence it is
a convex subcomplex of the $\CAT(0)$ complex
$A$ \cite[Proposition 6.2]{BridsonHaefliger}. Thus $M(r)$ is a subtree of
$A^{\{1,2\}}$. Each generating reflection $s_i\in S$ fixes the unique
vertex of type $\{i\!\}$ in $B_*$, and $s_i$ preserves each polygon of $X$
containing this vertex. In particular
$M(r)$ contains a vertex of rank~1.

Assume that $\pi \cap M(r)\neq\emptyset$. Then since $M(r)$ is connected 
and contains a rank--1 vertex, the intersection $\pi \cap
M(r)$ also contains a rank--1 vertex.  Observe that there is a reflection
$r'$ which fixes the same rank--1 vertex and furthermore preserves $\pi $.
Necessarily $r=r'$, thus $r$ preserves 
$\pi$. The restriction of
$r$ to
$\pi $ is an affine automorphism preserving an edge and fixing no vertex:
thus $r$ acts on $\pi $ like a usual reflection, and  $\pi \cap M(r)$ is
a diameter of $\pi $.

It follows easily that $M(r)$ is union of geometric walls, hence a single
geometric wall by connectedness. The rest of the lemma follows by \fullref{lem:walltree}.
\end{proof}

   For later purposes we have to consider other
types of walls. We want walls to be (separating)
$\CAT(0)$--subtrees of $A^{\{1,2\}}$, but we also
want a parity condition that the wall of a
reflection might fail to satisfy.

\begin{defn}\label{def:eparallel}
Let $X$ denote some  polygonal complex,   and
let $\pi$ denote some polygon in $X$,  not a
triangle. We say that two oriented edges
$\overrightarrow{a},\overrightarrow{a}'$
contained in $\pi$ are \textit{even-parallel in
$\pi$} whenever there are  disjoint connected
subgraphs
$\sigma,\sigma'$  of
$\partial\pi$ satisfying the following
holds:
\begin{enumerate}
\item The boundary $\partial\pi$ is  the
union $a\cup \sigma\cup a'\cup \sigma'$.
\item If $\ell,\ell'$ denote the numbers of edges of
$\sigma,\sigma'$, then $\ell$ is odd and
$\ell\le\ell'< \ell+4$.
\item The terminal vertices of  the oriented
edges
$\overrightarrow{a},\overrightarrow{a}'$
belong to $\sigma$ (so their initial vertices
belong to $\sigma'$).
\end{enumerate}

\begin{figure}[ht!]
\labellist
\small\hair 2pt
\pinlabel $\sigma$ at 136 539
\pinlabel $\sigma'$ at 135 395
\pinlabel $\sigma$ at 235 351
\pinlabel $\sigma'$ at 248 152
\pinlabel $\sigma$ at 349 543
\pinlabel $\sigma'$ at 350 377
\pinlabel $\sigma$ at 518 355
\pinlabel $\sigma'$ at 518 152
\pinlabel $\sigma$ at 618 545
\pinlabel $\sigma'$ at 620 363
\endlabellist
\centering
\includegraphics[width=11cm]{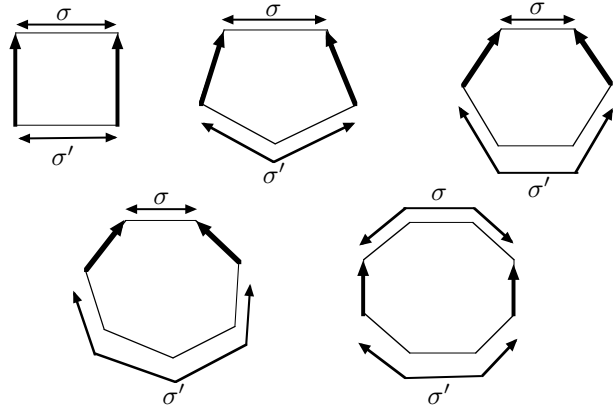}
\caption{Even-parallel oriented edges}
\end{figure}

Observe that if such subgraphs exist then they
are unique. Hence any oriented  edge
$\overrightarrow{a}$ of a polygon $\pi$ is
even-parallel to exactly one other oriented
edge $\overrightarrow{a}'$ of $\pi$. We denote
by $\parallel_e$ the equivalence relation on
oriented edges generated by even-parallelism
inside polygons: that is we write
$\overrightarrow{a}\parallel_e
\overrightarrow{a}'$ whenever there is a
sequence of polygons $\pi_1,\dots,\pi_n$ and a
sequence of oriented edges
$\overrightarrow{a}_0=\overrightarrow{a},
\dots,\overrightarrow{a}_n=\overrightarrow{a}'$
such that $\overrightarrow{a}_i$ and
$\overrightarrow{a}_{i+1}$ are even-parallel
in $\pi_{i+1}$. Note that the group ${\rm
Aut}(X)$ preserves the relation
$\parallel_e$.
\end{defn}

\begin{exmp}
   Let $k\ge 4$ denote the number of edges of
the polygon $\pi$.

   If $4\vert k$ then two oriented edges
$\overrightarrow{a},\overrightarrow{a}'$
contained in $\pi$ are { even-parallel in
$\pi$} whenever they induce opposite
orientations on $\partial\pi$ and the
associated geometric edges $a,a'$ are opposite
on $\partial\pi$.

More generally if $k=4f+r$ with $0\le r<4$
then $\ell=2f-1$ and $\ell'=2f-1+r$.

Note that parallelism and even-parallelism
inside a polygon coincide  if the number of
sides of this polygon is of the form $4f$ or
$4f+1$.  For example in a square complex
$\parallel\ =\ \parallel _e$.
\end{exmp}

\begin{defn}[$e$--walls]\label{defn:ewall}
Let $X$ denote a polygonal complex. \textit{An
$e$--wall of $X$} is an equivalence class of
$\parallel _e $.

We say that \textit{an $e$--wall $M$
passes through an oriented edge
$\overrightarrow a$}  whenever
$\overrightarrow a\in M$. We say that \textit{a
wall
$M$ separates a polygon $\pi $ of $X$}
whenever $M$ passes through oriented edges
$\overrightarrow a,\overrightarrow{a}'$ with
$a\subset \partial\pi,a'\subset \partial\pi$ and $\overrightarrow
a,\overrightarrow{a}'$ are even-parallel inside $\pi $.

\textit{An e-diameter of a polygon $\pi$\/} of $X$ is the union of two edges
$[v,p],[v',p]$ of $X'$ such that $p$ is the center of $\pi $, and there
are distinct oriented edges $\overrightarrow a,\overrightarrow{a}'\subset
\partial\pi$ which are even-parallel in $\pi $, and whose centers are $v,v'$.
The e-diameter is then said to be \textit{dual to $\overrightarrow
a,\overrightarrow{a}'$}.

\textit{The geometric $e$--wall associated to an $e$--wall
$M$} is the subcomplex $\size{M\!}$ of the
first barycentric subdivision $X'$, union of those e-diameters of polygons
which are dual to $\overrightarrow
a,\overrightarrow{a}'$, with $\overrightarrow
a,\overrightarrow{a}'\in M$.

  If some $e$--wall $M$ passes through an oriented
edge
$\overrightarrow a$, we will also  say that
\textit{$M$ is dual to
$\overrightarrow a$}, and that \textit{$M$ is dual to
$a$}. For any $e$--wall $M$ we will
denote by $V(M)$ the set of vertices $v$
belonging to an edge $a$  to which $M$ is dual.

  Note that a combinatorial map $f\co X\to Y$
maps an $e$--wall of $X$ inside an $e$--wall of $Y$.
The same is true for  the
associated geometric $e$--walls.
\end{defn}

Changing walls to $e$--walls, the proof of the following lemma is exactly
the same as that of \fullref{lem:walltree}.

\begin{lem}\label{lem:ewalltree}
 Assume that $X$ is locally compact, satisfies condition  ($\mathrm{C^4}$)  and is
simply connected. Equip $X$ with the
($\mathrm{C^4}$)--metric. Then:
\begin{enumerate}
\item Each geometric $e$--wall is a totally geodesic subtree.

\item Each geometric $e$--wall separates $X$ into two connected components.

\item Two opposite oriented edges are not even-parallel.

\item The intersection of a geometric $e$--wall and a polygon is either empty
or a single diameter. In particular a geometric $e$--wall $\size{M\!}$ is the
first barycentric subdivision of a tree whose vertices correspond to rank
1 vertices of $\size{M\!}$, and whose center of edges  correspond to rank
2 vertices of $\size{M\!}$.
\end{enumerate}
\end{lem}

\section{Systems of local reflections}\label{sec:localrefl}
\begin{defn}[Edge neighbourhoods]
Let $v$ denote some rank--1 vertex of $A$.  Then
$v$ is adjacent to exactly two vertices $x,y$
of rank 0.  \textit{The edge neighbourhood of $A$
with center $v$} is $U(v)=B(x)\cup B(y)$. Note
that
$U(v)$ is the first regular neighbourhood in $X$ of the edge
with center $v$.
\end{defn}

Let us define a \textit{weight\/} $m\co A^{\{2\}}\to
\{2,3,\dots\}$ by the rule $m(p)=m(e)$, where $e$ is the edge of
$L=\partial B_*$ whose center is $\rho(p)$. In other words $m(p)$
is half the number of sides of the polygon of $X$ with center $p$. For
any subcomplex $K\subset A$ we will denote by ${\rm
Aut}_{\rm rk}(K,m)$ the group of automorphisms of $K$ preserving the rank
and the weight $m$ on $K$. Clearly ${\rm
Aut}_{\rm rk}(A,m)={\rm
Aut}_{\rm rk}(A)$.

The following notions were introduced in \cite{HaglundExUniHom} for
arbitrary polygonal complexes all of whose polygons have the same number
of sides.

\begin{defn}\label{defn:hol}
Let $v$ denote some rank--1 vertex of $A$; let $x,y$ denote its
two adjacent rank--0 vertices.
\textit{A local reflection at $v$\/} is an automorphism $\sigma$ of the
edge neighbourhood $U(v)$ that exchanges $x$ and $y$, preserves the rank
and the weight
$m$, and fixes pointwise the facet of center $v$.

\textit{A system of local reflections on $A$\/} is a choice
$(\sigma_v)_v$ for every rank--1 vertex $v$ of a local reflection
$\sigma_v$ at $v$.

Let $\sigma$ denote some system of local reflections. Let $\varphi\in
{\rm Aut}_{\rm rk}(A)$. For any vertex $v $ of rank 1 consider
the local reflection at $v$ given by ${\varphi}\circ \sigma_{
{\varphi}^{-1}(v)}\circ {\varphi}^{-1}$. This defines a new system of
local reflections, denoted by $\varphi_*(\sigma)$. We thus obtain an
action of ${\rm Aut}_{\rm rk}(A)$ on the set of systems of
local reflections. Let ${\rm Aut}_{\rm rk}(A,\sigma)$
denote the stabilizer of $\sigma$.

For any triangle $\tau$ of $A$, let $\pi$ denote the polygon
of $X$ whose first barycentric subdivision contains $\tau$.
Let $(\overrightarrow{a_1},\dots,\overrightarrow{a_{2m}})$ denote the
combinatorial edge-path of $X$ winding once around $\pi$, such
that the first barycentric subdivision of $a_1$ contains an edge of
$\tau$. Then \textit{the holonomy of a system of local reflections
$\sigma$ at $\tau$} is the composition
$h_{\sigma}(\tau)=\sigma_{v_{2m}}\circ\dots\circ \sigma_{v_{1}}$,
where $v_j$ denotes the center of $a_j$. Note that $h_{\sigma}(\tau)$
is an automorphism of the block $B(\tau)$ containing $\tau$. It is
straightforward that $h_{\sigma}(\tau)$ fixes the rank--2 vertex $p$ of
$\tau$ ($p$ is the center of the polygon $\pi$). Let $\tau'$ be the
other triangle of $B(\tau)$ containing $p$: then
$h_{\sigma}(\tau')={h_{\sigma}(\tau)}^{-1}$. In fact $h_{\sigma}(\tau)$
fixes $\tau$ and $\tau'$.

We say that \textit{a system $\sigma$ of local reflections has no
holonomy} if for every triangle $\tau$ we have $h_{\sigma}(\tau)=1$.
\end{defn}

\begin{exmp}\label{exmp:systemcox}
For each rank--1 vertex $v=wv_i$ the stabilizer of $v$ in $W$ is the
conjugate $w\{1,s_i\}w^{-1}$. Thus we may restrict the global
reflection $ws_i w^{-1}$ to the edge neighbourhood $U(v)$. This
yields a system of local reflections on $A$, which we will
denote by $\sigma^W$.

We observe that  $\sigma^W$ has no holonomy because $W$ acts 
freely on blocks.
\end{exmp}

\begin{prop}\label{prop:conjugsystem}
Let $\sigma,\sigma'$ denote two systems of local reflections on
$A$ with no holonomy. Let $B,B'$ denote any two blocks of
$A$. Let $f\co B\to B'$ denote any {\rm germ}, that is, any isomorphism of
blocks preserving the rank and preserving the weight $m$ on rank--2
vertices. Then $f$ extends to a unique automorphism $\fbar\in {\rm
Aut}_{\rm rk}(A)$ such that $\fbar_*(\sigma)=\sigma'$.
\end{prop}

\begin{proof}
The proof is exactly parallel to the proof of
\fullref{prop:conjugatlas}. We just give a skeleton, details are
left to the reader.

Define a \textit{ gallery\/} to be a sequence of blocks such that two
consecutive  blocks are \textit{adjacent\/} in the sense that they are
equal or they share a facet.

Then to each pair $(f,G)$ where $f\co B\to B'$ is a germ and $G=(B_0=B,B_1,\dots,B_n)$ 
a gallery, we associate a gallery
$f(G)=(B'_0=B',B'_1,\dots,B'_n)$ and a sequence
$G(f)=(f_0,f_1,\dots,f_n)$ of germs $f_i\co B_i\to B'_i$ such that
\begin{enumerate}
\item if
$B_{i+1}=B_i$ then $B'_{i+1}=B'_i,f_{i+1}=f_i$
\item if $B_{i+1}\cap B_i$
is a facet with center $v_i$ then $B'_{i+1}$ is the block of $A$ whose
intersection with $B'_i$ is the facet of center $v'_i=f_i(v_i)$, and
$f'_{i+1}=\sigma'_{v'_i}\circ f_i\circ \sigma_{v_i}$.
\end{enumerate}
If $f$  extends to an automorphism  $\fbar\in{\rm
Aut}_{\rm rk}(A) $ conjugating
$\sigma$ with $\sigma'$ then for each gallery $G$ we have $\fbar(G)=f(G)$ and $\omega(G(f))={\fbar}\vert_{\omega(G)}$ with
$\omega(G)=B_n$ and $\omega(G(f))=f_n$. This proves uniqueness.

If $G$ is closed, then since $\sigma,\sigma'$ have no holonomy the last
germ of $G(f)$ is $f$ (it suffices to check this when $G$ is a gallery
winding once around a polygon of $X$).

Thus we have $\omega(G(f))=\omega(G'(f))$, for any two galleries
$G$ and $G'$ from
$B$ to some fixed  block. This way we
associate to each block a germ defined on this block:
these germs fit together and define a combinatorial map $\fbar\co A\to
A$.

We then see that $\fbar$ is a rank-preserving
automorphism conjugating $\sigma$ with $\sigma'$.
\end{proof}

\begin{cor}\label{cor:conjugsystem}
Let $\sigma$ denote some system  of local reflections on $A$
with no holonomy. Then ${\rm Aut}_{\rm rk}(A,\sigma)$ is
conjugate with ${\rm Aut}_{\rm rk}(A,\sigma^W)$ inside ${\rm Aut}_{\rm rk}(A)$.

The stabilizer of a
block $B$  in ${\rm Aut}_{\rm rk}(A,\sigma)$ is isomorphic (by
restriction) to the group of all germs $f\co B\to B$. In particular ${\rm
Aut}_{\rm rk}(A,\sigma)$ is a uniform lattice of $A$.

The group ${\rm Aut}_{\rm rk}(A,\sigma^W)$ is the semidirect product of
$W$ and the finite group of those automorphisms of the graph $L$ which
preserve the weight $m$ (this group acts naturally on $W\!$).
\end{cor}

\begin{proof}
 The proof is similar to the proof of 
\fullref{cor:conjugatlas}. The first assertion follows by
\fullref{prop:conjugsystem} with
$\sigma'=\sigma^W$.
For the second assertion use \fullref{prop:conjugsystem}
with $\sigma'=\sigma$.

The last affirmation is a consequence of the previous one, since $W$ is
already simply-transitive on blocks and for any block $B$ we have ${\rm
Aut}_{\rm rk}(B,m)\simeq {\rm Aut}(L,m)$.
\end{proof}

\begin{proof}[Proof of \fullref{thm:commholcox}]
Assume first the uniform lattice
$\Gamma$ of  $X(m,L)$ is commensurable to
$W(m,L)$ in ${\rm Aut} X(m,L))$. Let $f$ denote an automorphism of
$X(m,L)$ conjugating a finite index subgroup of $W$ onto a finite index
subgroup $\Gamma'\subset \Gamma$. Then $\Gamma'\subset {\rm Aut}_{\rm
rk}(A,\sigma)$ where $\sigma$ is the system of local reflections without
holonomy obtained by conjugating $\sigma^W$ by $f$.

Conversely assume
$\Gamma$ admits a finite index subgroup $\Gamma'$
preserving a system of local reflections $\sigma$ whose
holonomy is trivial. Then by \fullref{cor:conjugsystem}, 
there exists an automorphism $f\in {\rm Aut}_{\rm rk}(A)$ which
conjugates  ${\rm Aut}_{\rm rk}(A,\sigma)$ 
 with ${\rm Aut}_{\rm rk}(A,\sigma^W)$, and furthermore $W$ is of finite
index in ${\rm Aut}_{\rm rk}(A,\sigma^W)$. It follows that $f$ conjugates
a finite index subgroup of $\Gamma'$ onto a finite index subgroup of $W$.
Hence $f$ commensurates $\Gamma$ with $W$.
\end{proof}

\section{Killing the holonomy}\label{sec:killholcox}

\begin{lem}\label{lem:preservsystem}
Let $\Gamma'\subset {\rm Aut}_{\rm rk}(A)$ 
denote any subgroup acting freely on rank--1
vertices.
Then $\Gamma'$ preserves some system of
local reflections.
\end{lem}

\begin{proof}
Let $(v_t)_{t\in T}$  denote a set of
representatives of rank--1 vertices under the
action of $\Gamma'$. Choose arbitrarily a
local reflection $\sigma_{v_t}$ at each $v_t$, for example use
$\sigma^W$.
Then extend $\sigma$ using conjugation by
elements of $\Gamma'$.
\end{proof}

\subsection{Modification of a system of local reflections}

\begin{defn}[\rm Fields]\label{defn:fields}
    For each triangle  $\tau$ of $A$ we denote by
$B(\tau)$ the block containing $\tau$, by
$\phi^+(\tau)$ the facet of $B$ meeting $\tau$
along an edge, and by $F^+(\tau)$ the subgroup
of  ${\rm Aut}_{\rm rk}(B(\tau),m)$ fixing
$\phi^+(\tau)$ pointwise. We also denote by
$\phi^-(\tau)$ the other facet of $B(\tau)$
meeting $\tau$ (at the vertex of rank $2$).
We introduce the subgroup $F^-(\tau)$ of
${\rm Aut}_{\rm rk}(B(\tau),m)$ fixing
$\phi^-(\tau)$ pointwise.

    A \textit{semi-edge of $X$\/} is an edge $e$ of $A$
whose vertices have rank 0 and 1. Given a
semi-edge $e$, we will always denote by $\bar
e$ the unique other semi-edge of $X$ meeting
$e$. Note that the union of $e$ and $\bar e$
is the barycentric subdivision of an edge of
$X$.

    For each semi-edge $e$ of $X$ we denote by
$B(e)$ the block containing $e$, by
$\phi(e)$ the facet of $B$ meeting $e$, and
by $F(e)$ the subgroup of  ${\rm Aut}_{\rm
rk}(B(e),m)$ fixing $\phi(e)$ pointwise.

    \textit{A field of rank 2 on $A$\/} is a map $f$
assigning to each triangle $\tau$ of $A$ an
element $f(\tau)$ of  $F^+(\tau)$. Similarly
a map $f$ assigning to each semi-edge $e$ of
$X$ an element $f(e)$ of  $F(e)$ is \textit{a
field of rank 1 on $A$}.

    The set of fields of given rank has a natural
structure of group: for example in rank 1 we
set $f_1f_2(e)=f_1(e)f_2(e)$.

The \textit{support\/} of a rank--1 field $f$ is the set ${\rm Supp}(f)$
of semi-edges $e$ of $X$ such that $f(e)\neq
1$. Similarly the \textit{\/} of a rank--2 field $\varphi$ is the set
${\rm Supp}(\varphi)$
of triangles $\tau$ of $A$ such that $\varphi(\tau)\neq
1$.

The group ${\rm Aut}_{\rm rk}(A)$ acts on fields by conjugation:
for $\gamma\in {\rm Aut}_{\rm rk}(A)$ and $f$ a field we define $\gamma f$
by $\gamma f(x)=\gamma \circ f(\gamma^{-1}x)\circ \gamma^{-1}$.
\end{defn}

\begin{defn}
    Let $\sigma$ denote a system of local
reflections. Let $f$ denote some field of rank
1. We say that \textit{$f$ is symmetric with respect to
$\sigma$} (or \textit{$\sigma$--symmetric\/} for short) whenever for every rank--1 vertex $v$
and for every semi-edge $e$ of $X$ containing
$v$, we have ${\sigma_v}\vert_{B(e)} \circ
f(e) = f(\bar e)^{-1}\circ {\sigma_v}\vert_{B(e)} $.

    Assume $f$ is $\sigma$--symmetric.
Consider some rank--1 vertex $v$; let
$e_1,e_2$ be the two semi-edges of $X$
containing $v$. We define an automorphism
$\sigma'_v$ of $U(v)$ as follows:
\begin{enumerate}
\item for $p\in B(e_1)$ we set
$\sigma'_v(p)=\sigma_v(f(e_1)p)$
\item for $p\in B(e_2)$ we set
$\sigma'_v(p)=\sigma_v(f(e_2)p)$.
\end{enumerate}

    This is well-defined since $B(e_1)\cap
B(e_2)=\phi(v)$ and $f(e_i),\sigma_v$ fix
$\phi(v)$ pointwise. Clearly $\sigma'_v$
exchanges $e_1$ and $e_2$. Furthermore using
invariance, we have for $p\in B(e_1)$
$${\sigma'_v}^2(p)=
\sigma_v(f(e_2)\sigma_v(f(e_1)p)) =
\sigma_v(f(e_2)f(e_2)^{-1} {\sigma_v}(p)
)=p.$$ 
Thus $\sigma'_v$ is an involution.  We
have thus obtained a new system of local
reflections, which we will denote by $\sigma
f$. 
\end{defn}

\begin{rem}\label{rem:fieldtrans}
    For any two systems of local reflections
$\sigma,\sigma'$ there exists one and only one
 $\sigma$--symmetric rank--1 field $f$ such that $\sigma'=\sigma f$.
This field is defined by $f(e)p={\sigma_v}^{-1}(\sigma'_v(p))$ where $v$ is the rank
0 vertex of the semi-edge $e$.
\end{rem}

We are going to modify systems of local reflections by applying
convenient symmetric rank--1 field, in such a way that the holonomy of the
resulting system of local reflections is smaller.

\begin{lem}[Holonomy of $\sigma
f$]\label{lem:holcompos} Let $\sigma$ denote some system
of local reflections. Let $f$ denote some 
$\sigma$--symmetric rank--1 field. Let $\tau$ denote some
triangle of $A$. We introduce a sequence
${\underline g}={\underline
g}(\tau,\sigma,f)=(g_0,\dots,g_{2m})$ of
automorphisms of $B(\tau)$ defined as follows.

Let $\pi$
denote the polygon of $X$ containing $\tau$,
let
$(\overrightarrow{a_1},
\dots,\overrightarrow{a_{2m}})$ denote the
combinatorial edge-path of $X$ winding once
around $\pi$ such that the first  barycentric
subdivision of $a_1$ contains an edge of
$\tau$ (see \fullref{defn:hol}), and let $v_i$ denote the rank  1
vertex of $A$ at the center of $a_i$. Let
$e_i$ denote the initial semi-edge of
$\overrightarrow{a_i}$.  We set
$$ g_i=g_i(\tau,\sigma,f)=
({\sigma_{v_{1}}}\vert_{B(e_2)}\circ\cdots\circ{\sigma_{v_{i-1}}}\vert_{B(e_i)})\circ f(e_i) \circ
({\sigma_{v_{i-1}}}\vert_{B(e_{i-1})}\circ\cdots\circ
{\sigma_{v_{1}}}\vert_{B(e_1)} ).$$
Then
$g_i=f(e_1)\in F^+(\tau)$ for $i$ odd, $g_i\in
F^-(\tau)$ for $i$ even, and
furthermore
$$h_{\sigma f}(\tau)= h_{\sigma }(\tau)\circ
g_{2{m}}\circ \cdots\circ g_1,$$ 
where $2{m}$ denotes the number of vertices of the polygon $\pi$.
\end{lem}

\begin{proof}
By definition we have 
$$h_{\sigma f}(\tau)=(\sigma f)_{v_{2m}}\circ\dots\circ (\sigma
f)_{v_{1}} = \sigma
_{v_{2m}}\circ f(e_{2m})\circ \sigma
_{v_{2m-1}}\circ f(e_{2m-1})\circ \dots\circ \sigma
_{v_{1}}\circ f(e_1).$$ 
We also have $$ \sigma
_{v_{2m}}\circ f(e_{2m})\circ \sigma
_{v_{2m-1}}\circ f(e_{2m-1})=\sigma
_{v_{2m}}\circ\sigma
_{v_{2m-1}}\circ {f(e_{2m})}^{\sigma
_{v_{2m-1}}}\circ f(e_{2m-1}).$$ 
If we make successively each
$\sigma_{v_i}$ go from the right to the left we get the desired formula
$h_{\sigma f}(\tau)= h_{\sigma }(\tau)\circ g_{2{m}}\circ \cdots\circ g_1
$.

Now local reflections preserve rank and weight. So conjugation by the
inverse of the composition
$t_i={\sigma_{v_{1}}}\vert_{B(e_2)}\circ\cdots\circ{\sigma_{v_{i-1}}}\vert_{B(e_i)}$ sends $F(e_{i})$ onto
$F(t_i(e_{i}))\subset F(\tau)$. When $i$ is odd, $F(t_i(e_{i}))=F^+(\tau)$, and $F(t_i(e_{i}))=F^-(\tau)$ otherwise.
\end{proof}

 The following result already appears in
\cite{HaglundExUniHom}.

\begin{cor}\label{cor:decomposhol}
For every fixed triangle $\tau$,  the set of
all possible holonomies $h_{\sigma}(\tau)$ (as
$\sigma$ varies in the set of systems of local
reflections) is the whole product subgroup
$F^+(\tau)F^-(\tau)=F^-(\tau)F^+(\tau)$.
\end{cor}

\begin{proof}
By \fullref{rem:fieldtrans}, any system of local reflection $\sigma$ is
of the form $\sigma^W f$, for $f$ some $\sigma^W$--symmetric rank--1 field.
Thus by \fullref{lem:holcompos}, the set of
all possible holonomies $h_{\sigma}(\tau)$ is the set of all possible
products $g_{2m}\circ \dots\circ g_1$.

Note that the subgroups $F^+(\tau)$ and $F^-(\tau)$ normalize each other. Thus
the group they generate consists in elements of the form $g^+g^-$ with
$g^+\in F^+(\tau)$ and $g^-\in F^-(\tau)$. So we always have
$h_{\sigma}(\tau)\in F^+(\tau)F^-(\tau)$.

Conversely choose a $\sigma^W$--symmetric rank--1 field which takes an arbitrary value on
$e_1$ and $e_2$,  and is trivial outside the edges
$e_1,e_2,\overline e_1$ and $\overline e_2$.  Then we see that
$h_{\sigma}(\tau)$ is any element of
$F^-(\tau)F^+(\tau)=F^+(\tau)F^-(\tau)$.
\end{proof}

\subsection{Groups acting cleanly}

\begin{defn}[\rm Self-intersecting $e$--wall] Let
$X$ be some simply connected polygonal
complex and let $\Gamma$ denote some subgroup
of ${\rm Aut}(X)$. We say that an $e$--wall $M$
is \textit{self-intersecting under $\Gamma$\/} whenever
there exists $\gamma\in \Gamma$ such that
$M$ and $\gamma{M}$ are distinct,  but the  geometric
$e$--walls associated to $M$ and $\gamma{M}$ have nonempty
intersection. We say that \textit{$\Gamma$ acts without e-self-intersection\/} whenever  no $e$--wall is
self-intersecting under
$\Gamma$.

We say that $\Gamma\subset {\rm Aut}(X)$ is \textit{clean\/} whenever it acts without e-self-intersection
and moreover it acts freely on $X$.
\end{defn}

\begin{lem}\label{lem:virteclean}
   Let $\Gamma$ denote a discrete cocompact
group of automorphism of some
simply connected polygonal  complex
$X$. Assume that the stabilizers of $e$--walls are
separable in $\Gamma$.

Then there is a  finite index subgroup 
$\Gamma'\subset\Gamma$ acting without
e-self-intersection.
\end{lem}

The proof is the same as that of \fullref{lem:virtclean}; details are
left to the reader.

\begin{cor}\label{cor:virteclean}
Assume $(W,S)$ is two-dimensional.
 Let $\Gamma$ denote a residually finite
uniform lattice of ${\rm Aut}_{\rm rk}(A)={\rm
Aut}(X)$ all of whose $e$--wall stabilizers are
separable. Then there is a finite index
subgroup $\Gamma'\subset\Gamma$ such that
$\Gamma'$ is clean.
\end{cor}

\begin{proof}
By \fullref{lem:virteclean}, it suffices  to
prove that $\Gamma$  virtually acts freely on
$A$.

Since $A$ admits a $\CAT(0)$ length metric by \fullref{lem:condnpc},  any
finite order element of
$\Gamma$ has a fixed point in $A$. Thus, up to conjugation,  there are
finitely many finite order elements in $\Gamma$. Since $\Gamma$ is
residually finite there is a finite index torsion free subgroup
$\Gamma'\subset\Gamma$. Since $\Gamma$ acts discretely we see that
$\Gamma'$ has to act freely.
\end{proof}
In fact the separability of $e$--wall stabilizers imply the
residual finiteness of $\Gamma$.

\begin{rem}\label{rem:coxclean}
Assume $A$ is two-dimensional and all $m_{ij}$ are even, for example, $W$
is right-angled. Then $e$--walls are walls, and geometric walls are the
fixed point sets of reflections of
$W$. Thus an $e$--wall stabilizer is the centralizer of a reflection. Since
Coxeter groups are residually finite the centralizers of their
involutions are separable. So in this case \fullref{cor:virteclean}
applies: $W$ has a clean finite index subgroup.

In \cite{HaglundWise06} we prove that in every Gromov-hyperbolic
Coxeter group, each quasiconvex subgroup is separable. It follows that
in this case also $W$ has a clean finite index subgroup.

The general two-dimensional case remains open.
\end{rem}

\subsection{Killing half of the holonomy along an $e$--wall}

 Until the end of this subsection we assume that $W$ is two-dimensional,
and we consider a fixed clean uniform lattice $\Gamma^0$ of
$X$.

\begin{defn}[\rm $(i,j)$--adjacency]
Let $i$ and $j$ denote two distinct integers of $\{0,1,2\}$.
We say that
two triangles $\tau_1$ and $\tau_2$ are
\textit{$(i,j)$--adjacent\/} whenever they share an edge
whose endpoints are vertices $x$ and $y$ of ranks $i$ and $j$, respectively. We
say that two triangles are \textit{adjacent\/}  if they are $(i,j)$--adjacent for
some pair $(i,j)$.

The $(i,j)$--adjacency condition is an equivalence relation on
the set of triangles. Every $(0,2)$--adjacency class
contains two triangles. For each triangle $\tau$ we will denote by
$\tau'$ the second triangle
of the $(0,2)$--adjacency class of $\tau$. Note that $\tau$ and $\tau'$ are
contained in the same polygon.
\end{defn}

\begin{defn}
Since $\Gamma^0$ preserves rank and acts
freely on rank--2 vertices, it acts freely on the set of
polygons of $X$, and we may find \textit{a
transversal set
${\mathcal T}$ of triangles} with the
following properties:

\begin{enumerate}
\item ${\mathcal T}$ is $\Gamma^0$--invariant
\item for each polygon $\pi$, the set of triangles of
${\mathcal T}$ contained
in $\pi$ consists of a $(0,2)$--adjacency class denoted by ${\mathcal T}_{\pi}$.
\end{enumerate}

From now on we choose such a set ${\mathcal T\/}$. We will use it 
to compute the holonomy of various systems of
local reflections.

Let $\pi$ denote a polygon of $X$. Since $\partial \pi$ has an even
number of edges
there is a labeling of these edges in $\{+,-\}$ such that two
adjacent edges receive distinct labels. This labeling defines  a
labeling on the set of rank--1 vertices of $\pi$.
For any $e$--wall $M$ separating the polygon $\pi$  the two edges of
$\partial\pi$ dual to $M$ receive the same label, say $\varepsilon$.
Then the rank--1 vertex of exactly one of the triangles of  ${\mathcal
T}_{\pi}$ has the same label $\varepsilon$: we will denote this
triangle by $\tau(\pi,M)$. Note that  ${\mathcal
T}_{\pi}=\{\tau(\pi,M),\tau(\pi,M)'\}$.

Observe that parallel oriented edges inside $\pi$  receive the same label
if and only if they are also even-parallel. This is why we need the 
notion of $e$--walls,  justifying
our terminology of even-parallelism.
\end{defn}

\begin{defn}
Let $\sigma$ denote any system of local
reflections. \textit{A decomposition of the
holonomy
$h_{\sigma}$} is a rank--2 field $\varphi$ with support contained in
${\mathcal T}$ such
that for any triangle $\tau\in{\mathcal T}$
we have
$h_{\sigma}(\tau)=\varphi(\tau')^{-1}
\varphi(\tau)$.
\end{defn}

\begin{rem}\label{rem:existdecomp}
Let $\sigma$ denote any system of local
reflections. Then $\sigma$ always admits a decomposition $\varphi$ by
\fullref{cor:decomposhol}. Furthermore if $\sigma$ is invariant
under a subgroup $\Gamma\subset{\rm Aut}_{\rm rk}(A)$ acting freely
on vertices then the decomposition $\varphi$ of $\sigma$ may be
chosen to be $\Gamma$--invariant.
\end{rem}

\begin{thm}\label{thm:killholwall} Assume $(W,S)$ is two-dimensional
and $I$ does
not contain any subset $\{i,j,\ell\}$ with $m_{ij}\le 3,m_{j\ell}=3,m_{\ell
i}<+\infty$ (so that $X$ satisfies condition ($\mathrm{C^4}$) by \fullref{lem:condnpc}).

Let
$\Gamma^0$ denote a clean uniform lattice in ${\rm Aut}_{\rm rk}(A)$.
where each finite index subgroup of a wall-stabilizer in
$\Gamma^0$ contains the intersection with the wall-stabilizer of a
finite index subgroup in $\Gamma^0$.  Consider such a finite index normal subgroup
$\Gamma\subset\Gamma^0$, and let 
$\sigma$ denote a $\Gamma$--invariant
system of local reflections and 
$\varphi$ denote a rank--2 field invariant
under $\Gamma$ which is a
decomposition of $h_{\sigma}$.

For any $e$--wall $M$  there
is a system of local reflections $\sigma'$,  a
rank--2 field
$\varphi'$ and a finite index normal subgroup
$\Gamma'\subset \Gamma^0$ such that:
\begin{enumerate}
\item $\sigma'$ and $\varphi'$ are invariant
under $\Gamma'$
\item
$\varphi'$ is a decomposition of
$h_{\sigma'}$
\item $\varphi'(\tau)=\varphi(\tau)$ on
triangles $\tau\in {\mathcal T}_{\pi}$ if a polygon $\pi$ is not separated by any  translate
$\gamma M$ for $\gamma\in\Gamma^0$
\item $\varphi'(\tau(\pi,\gamma M))=1$ and $\varphi'(\tau(\pi,\gamma
M)')=\varphi(\tau(\pi,\gamma M)')$
   if a polygon $\pi$ is separated by some (necessarily unique) translate
$\gamma M$ for $\gamma\in\Gamma^0$.
\end{enumerate}
\end{thm}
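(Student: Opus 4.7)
The strategy parallels the proof of Theorem \ref{thm:killcocyclwall}, with $\sigma$--symmetric rank-1 fields replacing the additive 1-cochains. Fix the $e$--wall $M$ and introduce a set $F = F(\sigma,\varphi,M)$ consisting of those $\sigma$--symmetric rank-1 fields $f$ whose support lies in the collection of semi-edges whose underlying edge of $X$ is dual to $M$, and which satisfy, for every polygon $\pi$ separated by $M$, the identity $g_{i_2}(\tau,\sigma,f)\circ g_{i_1}(\tau,\sigma,f) = \varphi(\tau)^{-1}$ in $F^+(\tau)$, where $\tau = \tau(\pi,M)$ and $i_1<i_2$ are the two gallery indices (in the sense of Lemma \ref{lem:holcompos}) of the oriented edges of $\pi$ dual to $M$. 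The definition of even-parallelism is designed precisely so that the two edges of $\pi$ dual to $M$ have the same parity label as the rank-1 vertex of $\tau(\pi,M)$; this forces both surviving factors $g_{i_1}, g_{i_2}$ to lie in $F^+(\tau)$, making the defining equation a well-posed relation in the finite group $F^+(\tau)$.

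The first key step is to verify that $F$ is a finite, nonempty, $\Gamma_M$--invariant set. Finiteness is immediate since an element of $F$ is prescribed on a discrete set of semi-edges, each taking values in a finite group. For nonemptiness one uses the tree structure of the geometric $e$--wall $|M|$ provided by Lemma \ref{lem:ewalltree}: fix a semi-edge $e_0$ dual to $M$ and an arbitrary $x_0 \in F(e_0)$, then propagate outward along $|M|$. At each edge of $|M|$, which corresponds to a polygon $\pi$ separated by $M$, the defining equation determines $f$ on the second semi-edge of $\pi$ dual to $M$ from its value on the first; at each vertex of $|M|$ the $\sigma$--symmetry relation determines $f$ on the opposite semi-edge. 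Since $|M|$ is a tree, no consistency obstruction arises and every initial value $x_0$ extends uniquely to an element of $F$, so the restriction map $F \to F(e_0)$ is a bijection. Invariance of $F$ under $\Gamma_M$ is a direct consequence of the $\Gamma$--invariance of $\sigma$ and $\varphi$ combined with $\Gamma_M \subset \Gamma$.

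The remainder follows the blueprint of Theorem \ref{thm:killcocyclwall}. Since $\Gamma_M$ acts on the finite set $F$, a finite index subgroup of $\Gamma_M$ acts trivially; the separability hypothesis then yields a finite index normal subgroup $\Gamma(\sigma,\varphi,M) \subset \Gamma^0$, contained in $\Gamma$, whose intersection with $\Gamma_M$ fixes $F$ pointwise. Because there are only finitely many $\Gamma$--invariant pairs $(\sigma,\varphi)$ by cocompactness of $\Gamma$, one can intersect over all such pairs to obtain a single finite index normal subgroup $\Gamma' \subset \Gamma^0$ with $\Gamma'_M$ acting trivially on every relevant $F$. Choose coset representatives $\gamma_1=1,\gamma_2,\ldots,\gamma_k \in \Gamma^0$ for the $\Gamma^0$--translates of $M$ modulo $\Gamma'$ and pick $f_i \in F(\sigma,\varphi,\gamma_i M)$; normality of $\Gamma'$ in $\Gamma^0$ allows these to be spliced into a single $\Gamma'$--invariant rank-1 field $f$ supported on semi-edges dual to some $\Gamma^0$--translate of $M$. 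Set $\sigma' = \sigma f$ and define $\varphi'$ to agree with $\varphi$ on triangles in polygons not separated by any $\Gamma^0$--translate of $M$; on a polygon $\pi$ separated by the unique $\gamma M$, set $\varphi'(\tau(\pi,\gamma M)) = 1$ and $\varphi'(\tau(\pi,\gamma M)') = \varphi(\tau(\pi,\gamma M)')$. The cleanliness of $\Gamma^0$ (no $e$--self-intersection) ensures ``the unique $\gamma M$'' really exists. The verification that $\varphi'$ decomposes $h_{\sigma'}$ is immediate on polygons in the first case since there $\sigma'|_\pi = \sigma|_\pi$, and on polygons in the second case it reduces, via Lemma \ref{lem:holcompos} and the defining equation of $F$, to the computation $h_{\sigma'}(\tau(\pi,\gamma M)) = h_\sigma(\tau) \cdot \varphi(\tau)^{-1} = \varphi(\tau')^{-1}$; the analogous identity for the companion triangle is then automatic from $h_{\sigma'}(\tau') = h_{\sigma'}(\tau)^{-1}$. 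The main technical point is the parity bookkeeping in the definition of $F$: it is essential that even-parallelism (rather than plain parallelism) is used, since this is precisely what guarantees that both surviving factors of the holonomy calculation lie in the single subgroup $F^+(\tau)$, permitting the equation defining $F$ to be solvable in that group.
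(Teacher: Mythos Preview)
Your proposal is correct and follows essentially the same approach as the paper's proof: the same set $F(\sigma,\varphi,M)$, the same tree-propagation argument along $|M|$ establishing the bijection $F\to F(e_0)$, the same intersection over all $\Gamma$--invariant pairs $(\rho,\psi)$ to produce $\Gamma'$, and the same splicing and holonomy computation via Lemma~\ref{lem:holcompos}. One small imprecision: your sentence ``Finiteness is immediate since an element of $F$ is prescribed on a discrete set of semi-edges, each taking values in a finite group'' is not a valid argument (there are typically infinitely many such semi-edges); finiteness actually follows from the bijection $F\to F(e_0)$ you establish in the next sentence, exactly as in the paper.
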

The theorem says that up to passing to a finite index subgroup
$\Gamma'$ we may simplify the decomposition of the holonomy of an
invariant system of local reflections along $\Gamma^0$--translates of
the wall $M$.
\begin{proof}
We first study the set $F=F(\sigma,\varphi,M)$ of rank--1 fields
$f$ with the following properties:
\begin{enumerate}
\item $f$ is $\sigma$--symmetric

\item any semi-edge of ${\rm Supp}(f)$ is contained in an edge  dual to
$M$
\item for any polygon $\pi$ separated by
$M$, using the notation of
\fullref{lem:holcompos} we have
$$g_{i_2(\pi,M)}(\tau(\pi,M),\sigma,f)
g_{i_1(\pi,M)}(\tau(\pi,M),\sigma,f)={\varphi(\tau(\pi,M))}^{-1}.$$
\end{enumerate}
We must explain the definition of the integers
$i_1(\pi,M)$ and $i_2(\pi,M)$. The triangle
$ \tau(\pi,M)$ defines a sequence of semi-edges
$(e_1,\dots,e_{2m_{\pi}})$ winding once around $\pi$ with $e_1\subset
\tau(\pi,M)$, as in \fullref{lem:holcompos}. Then $i_1(\pi,M)$ is the first integer $1\le i\le
2m_{\pi}$ such that $e_i$ meets $\size{M\!}$ and $i_2(\pi,M)$ is the second
such integer. Note that $\size{M\!}$ meets only two of the semi-edges $e_i$.

If $m_{\pi}$ is even then $i_2(\pi,M)=i_1(\pi,M)+m_{\pi}$, and in contrast if $m_{\pi}$ is
odd then $i_2(\pi,M)=i_1(\pi,M)+m_{\pi}-1$.

\begin{lem}
The set $F$ is finite, nonempty and invariant
under the stabilizer $\Gamma_M$ of $M$ in
$\Gamma$.
\end{lem}

\begin{proof}
To prove the two first assertions we
consider a rank--1 vertex $v$ in $\size{M\!}$ together
with a semi-edge $e$ of $X$ containing $v$,
and we show that the map
$F\to F(e)$ sending
$f\in F$ to $f(e)\in F(e)$ is a bijection.

Consider a
combinatorial injective path
$c=(v_0=v,v_1,\dots,v_n=w)$ of the tree $T(M)$ whose first barycentric
subdivision is
$\size{M\!}$ (see \fullref{lem:condnpc} and \fullref{lem:ewalltree}).
Any edge
$(v_{i-1},v_{i})$ of this path is contained in a single polygon
    $\pi _i$ of
$X$. With the above definition (using notation of \fullref{lem:holcompos})
the semi-edges of the sequence $(e_1,\dots,e_{2m_{\pi}})$ winding once
around $\pi_i$ with $e_1\subset \tau(\pi_i,M)$ meet  $\size{M\!}$ exactly
at  the indices $i_1:=i_1(\pi_i,M)$ and $i_2:=i_2(\pi_i,M)$. So if $w_j$ is the
rank--1 vertex of $e_j$ we have
$\{w_{i_1(\pi_i,M)},w_{i_2(\pi_i,M)}\}=\{v_{i-1},v_{i}\}$. To
simplify notation we set $\tau(\pi_i,M)
=\tau_i$. We also set
$a_i=e_{i_1}$ and $\wbar a_i=e_{i_2}$ if $v_{i-1}\in e_{i_1}$, and
otherwise we set $a_i=e_{i_2}$ and $\wbar a_i=e_{i_1}$.

As in
\fullref{lem:holcompos}, we define
two isomorphisms from the block
$B(\tau_i)$ onto $B(e_{i_1})$ and  from the block
$B(\tau_i)$ onto $B(e_{i_2})$, by
composing local reflections. More precisely
we define
${s_{i,1}}={\sigma_{w_{i_1-1}}}\vert_{B(e_{i_1-1})}
\circ\cdots\circ {\sigma_{w_{1}}}\vert_{B(e_{1})} $ which is the identity if $i_1=1$, and
 ${s_{i,2}}={\sigma_{w_{i_2-1}}}\vert_{B(e_{i_2-1})}
\circ\cdots\circ {\sigma_{w_{1}}}\vert_{B(e_{1})} $.

We claim that for any element $f$ of
$F(e)$ and any semi-edge $e'$ of $X$
containing the endpoint $w$ of $c$, if we set $\wbar a_0=e$ and
$a_{n+1}=e'$, then there exists one and only one sequence
$\bigl(\fbar_0,f_1,\fbar_1,f_2,\dots,
f_n,\fbar_n,f_{n+1}\bigr)$ such that
\begin{enumerate}
\item $\fbar_0=f$; for $1\le i\le
n$, $f_i\in
F(a_i)$, $\fbar_i\in
F(\wbar a_i)$;  $f_{n+1}\in F(e')$
\item for $0\le i\le
n$, if
$a_{i+1}=\wbar a_i$ then $f_{i+1}=\fbar_i$ and otherwise
$f_{i+1}=\sigma_{v_{i}}\circ
\fbar_i\circ
\sigma_{v_{i}}$

\item for $1\le i\le
n$, if $a_i=e_{i_1}$ then
$({s_{i,2}}^{-1}\circ \fbar_i\circ s_{i,2})\circ
({s_{i,1}}^{-1}\circ {f_i}\circ s_{i,1})=\varphi(\tau_i)^{-1}$ and otherwise
$({s_{i,2}}^{-1}\circ f_i\circ s_{i,2})\circ
({s_{i,1}}^{-1}\circ  \fbar_i\circ s_{i,1})=\varphi(\tau_i)^{-1}$.
\end{enumerate}

Given  $f_i\in F(a_i)$ the last equation is
always solvable in the unknown
$\fbar_i\in F(\wbar a_i)$ because
the rank--1 vertices of $e_1,e_{i_1},e_{i_2}$ are mutually at even
distance on $\partial \pi _i$, so we see that the isomorphism $s_{i,1}$
conjugates $F(e_1)=  F^+(\tau_i)$ with
$F(a_i)$ or  $F(\wbar a_i)$ (and similarly $s_{i,2}$
conjugates $F^+(\tau_i)$ with $F(\wbar a_i)$
or $F(a_i)$). The other
conditions are trivial to satisfy. So the
claim follows by induction on the length $n$
of the path $c$.

\begin{figure}[ht!]
\labellist
\small\hair 2pt
\pinlabel $a_4=e'$ [l] at 435 348
\pinlabel $v_3$ at 443 318
\pinlabel {$\bar a_3$} at 401 313
\pinlabel $\tau'_3$ at 482 278
\pinlabel $\tau_3$ at 497 249
\pinlabel {$\bar a_2$} [r] at 349 227
\pinlabel $\pi_3$ at 452 218
\pinlabel $a_3$ at 405 200
\pinlabel $v_2$ at 377 191
\pinlabel {$e=\bar a_0=a_1$} [r] at 57 166
\pinlabel $v=v_0$ [tr] at 59 137
\pinlabel $\tau'_1$ at 90 167
\pinlabel $\tau_1$ at 73 152
\pinlabel $v_1$ at 152 152
\pinlabel $a_2$ at 153 117
\pinlabel {$\bar a_1$} at 126 121
\pinlabel $\pi_1$ at 90 121
\pinlabel $\pi_2$ at 346 108
\pinlabel $\tau'_2$ at 260 35
\pinlabel $\tau_2$ at 298 35
\endlabellist
\centering
\includegraphics[width=13cm]{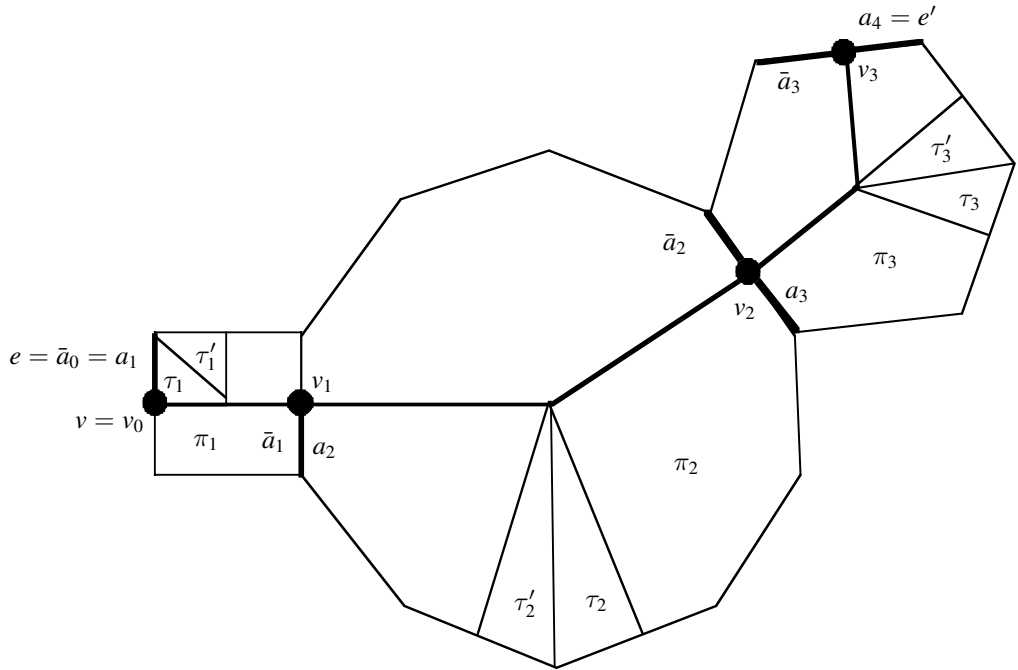}
\caption{Extension of a rank--$1$ field along an $e$--wall}
\end{figure}

Let $e'$ be a semi-edge $e'$ of $X$ containing a
rank--1 vertex $w$ of $\size{M\!}$, and denote by $c=(v_0=v,v_1,\dots,v_n=w)$ the unique
injective combinatorial path in the tree
$T(M)$ from $v$ to $w$.  Using the notation
above, we define
$f(e')$ by setting $f(e')={f_{n+1}}$.
For any semi-edge $e''$ not adjacent to $\size{M\!}$
we set $f(e'')=1$. We thus obtain the unique
preimage of $f\in F(e)$ in $F$ under the
restriction map
$F\to F(e)$.

It remains to check that $F$ is invariant
under $\Gamma_M$. For $f\in F$ and
$\gamma\in\Gamma_M$ clearly $\gamma f$ is
trivial outside the set of semi-edges
adjacent to $\size{M\!}$. Also $\gamma f$ is $\sigma$--symmetric, because $\sigma$ is invariant
under $\Gamma$. The third relation defining
elements of $F$ is still satisfied by $\gamma
f$ because $\varphi$ is $\Gamma$--invariant
and $g_i(\gamma\tau,\gamma\sigma,\gamma f)$
is the conjugate by $\gamma$ of
$g_i(\tau,\sigma,f)$.
\end{proof}

We construct $\sigma',\varphi'$ and $\Gamma'$ using
the action of $\Gamma_M$ on the sets $F(c,M)$.

Since $F(\sigma,\varphi,M)$ is finite and $\Gamma_M$ acts on
it, some finite index subgroup of 
$\Gamma_M$ fixes $F(\sigma,\varphi,M)$ pointwise. By the hypotheses, there
exists a finite index subgroup
$\Gamma'(\sigma,\varphi)\subset \Gamma^0$ such that
$\Gamma'(\sigma,\varphi)_M$ acts trivially on $F$. We may
even assume that $\Gamma'(\sigma,\varphi)$ is normal in
$\Gamma^0$ and contained in $\Gamma$.

Consider all pairs $(\rho,\psi)$ with
a system $\rho$ of local reflections and a rank--2 field $\psi$ 
decomposing the holonomy
$h_{\rho}$ where $\rho$ and $\psi$ are invariant under  $\Gamma$.
Observe that there are finitely many such pairs, because of
$\Gamma$--invariance ($\Gamma$ is cocompact). For each
pair we consider a finite index normal subgroup
$\Gamma'(\rho,\psi)\subset \Gamma^0$ with $\Gamma'(\rho,\psi)\subset \Gamma$ such that the stabilizer of
$M$ in
$\Gamma'(\rho,\psi)$ acts trivially on $F(\rho,\psi,M)$.
Then the intersection of all such $\Gamma'(\rho,\psi)$ is a
finite index normal subgroup $\Gamma'$ of $\Gamma^0$ where $\Gamma'\subset
\Gamma$ and ${\Gamma'}_M$ acts trivially on all
the sets $F(\rho,\psi,M)$.

    Choose elements
$\gamma_1=1,\gamma_2,\dots,\gamma_k\in
\Gamma^0$ such that any $e$--wall $\gamma M$ with
$\gamma\in\Gamma^0$ is equivalent modulo
$\Gamma'$ to a unique $\gamma_i M=M_i$. We now
define a $\Gamma'$--invariant rank--1 field
$f$.

Pick  elements $f_i\in
F(\sigma,\varphi,M_i)$. We first check  that $f_i$ is invariant
under ${\Gamma'}_{M_i}$.  Conjugation
by ${\gamma_i}$ sends
$F({\gamma_i}^{-1}\sigma,{\gamma_i}^{-1}\varphi,M)$ to
$F(\sigma,\varphi,{\gamma_i}M)$. Observe  that
${\gamma_i}^{-1}\sigma$ and ${\gamma_i}^{-1}\varphi$ are invariant under
$\Gamma$ (because $\Gamma$ is normal in $\Gamma^0$). So by construction ${\Gamma'}_{M}$
fixes $F({\gamma_i}^{-1}\sigma,{\gamma_i}^{-1}\varphi,M)$. Thus
${\gamma_i}{\Gamma'}_{M}{\gamma_i}^{-1}$ fixes
$F(\sigma,\varphi,{\gamma_i}M)$. But $\Gamma'$ is normal in
$\Gamma^0$, so ${\gamma_i}{\Gamma'}_{M}{\gamma_i}^{-1}=
{\Gamma'}_{M_i}$.

For every translate $\gamma M$, $\gamma\in\Gamma^0$, we
now choose some
$\gamma'\in\Gamma'$ such that $\gamma'
\gamma_i M=\gamma M$ with $\gamma'=1$ if
$\gamma M$ is $\gamma_i M$ for some $i$.

For a semi-edge $e$ not adjacent to a geometric $e$--wall of
the form  $\gamma \size{M\!}$ with
$\gamma\in\Gamma^0$, we set $f(e)=1$.
For a semi-edge $e$ adjacent to $\gamma_i \size{M\!}$
we set $f(e)=f_i(e)$. Observe that by cleanliness $e$ is not adjacent to
an other $\gamma_j \size{M\!}$.

More generally if the semi-edge $e$ is adjacent to a geometric
$e$--wall of the form  $\gamma \size{M\!}$ with
$\gamma\in\Gamma^0$ and  $\gamma M=\gamma'\gamma_iM$, we set
$f(e)=\gamma'\circ
f_i({\gamma'}^{-1}e)\circ {\gamma'}^{-1}$. Again, by cleanliness $e$ is
not adjacent to  $\gamma''\size{M\!}$ with $\gamma M\neq \gamma''M$.

The rank--1 field $f$ is $\Gamma'$--invariant because each $
{\Gamma'}_{M_i}$ fixes $f_i$.
Since $\Gamma'\subset \Gamma$ each of the chosen elements $\gamma'$
preserves $\sigma$. Then by construction $f$ is $\sigma$--symmetric.
We set $\sigma'=\sigma f$. This new system $\sigma'$ of local reflections is also
$\Gamma'$--invariant.

We now compute the holonomy of $\sigma'$ at some triangle
$\tau\in{\mathcal T}$ and show how to decompose it. Let us denote by
$\pi$ the polygon of $X$ containing $\tau$. We have either
$\tau(\pi,M)=\tau$ or $\tau(\pi,M)=\tau'$.

If $\pi$ is not separated by an $e$--wall of the form $\gamma M$, $\gamma
\in \Gamma^0$, then for each semi-edge $e\subset\partial \pi$ we have
$f(e)=1$. We then have $h_{\sigma
'}(\tau)=h_{\sigma}(\tau)=\varphi(\tau')^{-1}\varphi(\tau)$, by \fullref{lem:holcompos}. We thus
set $\varphi'(\tau)=\varphi(\tau)$ and $\varphi'(\tau')=\varphi(\tau')$.

Assume now that $\pi$ is separated by at least one $\Gamma$--translate
of $M$. Since cleanliness implies that distinct $\Gamma$--translates of $M$ have
disjoint associated geometric $e$--walls, there exists a unique
translate
$\gamma M$ which separates
$\pi$. By \fullref{lem:holcompos}, we  have $h_{\sigma
'}(\tau(\pi,\gamma M))=h_{\sigma }(\tau(\pi,\gamma M))\circ
g_{2m_{\pi}}\circ\cdots\circ g_1$.  On the set of semi-edges adjacent
to $\gamma M$ the field $f$ coincides with an element of
$F(\sigma,\varphi,\gamma M)$. Thus the product
$g_{2m_{\pi}}\circ\cdots\circ g_1$ reduces to $g_{i_2}\circ g_{i_1}$,
and this latter is ${\varphi(\tau(\pi,\gamma M))}^{-1}$. We thus get
$h_{\sigma '}(\tau(\pi,\gamma M))=h_{\sigma }(\tau(\pi,\gamma
M))\circ\varphi(\tau(\pi,\gamma M))^{-1} =\varphi(\tau(\pi,\gamma
M)')^{-1}$, and then set $\varphi'(\tau(\pi,\gamma M))=1$ and
$\varphi'(\tau(\pi,\gamma M)')=\varphi(\tau(\pi,\gamma
M)')$.

If we extend $\varphi'$ outside ${\mathcal T}$ by setting
$\varphi'=1$ we obtain  a $\Gamma'$--invariant rank--2 field that
decomposes the holonomy of $\sigma'$ and has the required properties.
\end{proof}

\subsection{Finding a virtually invariant system of local reflections without holonomy}

\begin{thm}\label{thm:killehol}
Assume $(W,S)$ is two-dimensional
and $I$ does
not contain any subset $\{i,j,\ell\}$ with $m_{ij}\le 3,m_{j\ell}=3,m_{\ell
i}<+\infty$ (so that $X$ satisfies condition ($\mathrm{C^4}$) by \fullref{lem:condnpc}).

Let $\Gamma$ denote a residually finite uniform lattice of ${\rm
Aut}_{\rm rk}(A)$ in which the finite index subgroups of $e$--wall 
stabilizers are  separable. Then $\Gamma$ has a finite index subgroup
preserving a system of local reflections without holonomy.
\end{thm}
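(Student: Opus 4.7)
The strategy is to combine Corollary \ref{cor:virteclean}, Lemma \ref{lem:preservsystem} with Remark \ref{rem:existdecomp}, and iterated application of Theorem \ref{thm:killholwall}. First I would pass to a clean finite index subgroup $\Gamma^0\subset\Gamma$ via Corollary \ref{cor:virteclean}; the required residual finiteness and separability of $e$-wall stabilizers in $\Gamma$ are both hypotheses. Since $\Gamma^0$ acts freely on rank-$1$ vertices, Lemma \ref{lem:preservsystem} yields a $\Gamma^0$-invariant system $\sigma$ of local reflections, and Remark \ref{rem:existdecomp} a $\Gamma^0$-invariant rank-$2$ field $\varphi$ with support in a $\Gamma^0$-invariant transversal $\mathcal{T}$ decomposing $h_\sigma$. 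Because separability descends from $\Gamma$ to the finite index subgroup $\Gamma^0$, Remark \ref{rem:engulf} supplies the technical condition used in Theorem \ref{thm:killholwall}: every finite index subgroup of $\Gamma^0_M$ is the intersection with $\Gamma^0_M$ of a finite index subgroup of $\Gamma^0$.

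Next I would enumerate representatives $M_1,\dots,M_k$ of the (finitely many, by cocompactness of $\Gamma^0$) orbits of $e$-walls, and apply Theorem \ref{thm:killholwall} once for each $M_j$. This produces a descending chain of finite index normal subgroups $\Gamma^0=\Gamma^{(0)}\supset\Gamma^{(1)}\supset\dots\supset\Gamma^{(k)}$, together with $\Gamma^{(j)}$-invariant systems $\sigma^{(j)}$ and $\Gamma^{(j)}$-invariant decompositions $\varphi^{(j)}$ of $h_{\sigma^{(j)}}$, satisfying the inductive invariant $\varphi^{(j)}\bigl(\tau(\pi,\gamma M_i)\bigr)=1$ for every $\gamma\in\Gamma^0$ and every $1\le i\le j$. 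The main combinatorial check will be that this invariant is preserved at step $j{+}1$: on a polygon $\pi$ not separated by any $\Gamma^0$-translate of $M_{j+1}$ the values of $\varphi$ are unchanged, and on a polygon $\pi$ separated by a (unique, thanks to cleanliness) translate $\gamma M_{j+1}$, any triangle $\tau(\pi,\gamma' M_i)$ with $i\le j$ must coincide with one of the two elements of $\mathcal{T}_\pi=\{\tau(\pi,\gamma M_{j+1}),\tau(\pi,\gamma M_{j+1})'\}$, so it is either reset to $1$ by the theorem or retains its previous, already trivial, value.

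Finally, every polygon $\pi$ of $X$ is separated by at least one $e$-wall (the $e$-wall through any edge of $\partial\pi$ also contains the even-parallel edge inside $\pi$), and as the separating $e$-wall varies, the distinguished triangle $\tau(\pi,M)$ runs over both elements of $\mathcal{T}_\pi$, by the alternating $\pm$-labelling of rank-$1$ vertices of $\partial\pi$. Hence at the end of the iteration, $\varphi^{(k)}$ vanishes identically on $\mathcal{T}$, so $h_{\sigma^{(k)}}(\tau)=\varphi^{(k)}(\tau')^{-1}\varphi^{(k)}(\tau)=1$ for every $\tau\in\mathcal{T}$. Since the holonomy at any other triangle of a polygon is obtained from one of these by conjugating the corresponding cyclic product of local reflections by an initial segment, triviality on $\mathcal{T}$ propagates to every triangle, and $\Gamma^{(k)}$ is the desired finite index subgroup of $\Gamma$. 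I expect the main obstacle to be precisely the bookkeeping in the second paragraph—making sure that killing half of the holonomy along the $(j{+}1)$-st orbit of $e$-walls does not resurrect the parts already killed along the previous $j$ orbits—since all the real analytic work is already encapsulated in Theorem \ref{thm:killholwall}.
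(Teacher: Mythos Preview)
Your proposal is correct and follows essentially the same route as the paper: pass to a clean subgroup via Corollary~\ref{cor:virteclean}, pick an invariant system and decomposition via Lemma~\ref{lem:preservsystem} and Remark~\ref{rem:existdecomp}, then iterate Theorem~\ref{thm:killholwall} over a set of orbit representatives of $e$--walls. Your bookkeeping via the explicit inductive invariant $\varphi^{(j)}(\tau(\pi,\gamma M_i))=1$ for $i\le j$ is a clean repackaging of the paper's polygon-by-polygon trace, and your final observation that triviality of $h_{\sigma}$ on $\mathcal{T}$ propagates to all triangles by cyclic conjugation makes explicit a step the paper leaves implicit.
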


\begin{proof}
First by
\fullref{cor:virteclean},  there is a finite index subgroup of
$\Gamma$  acting as a clean uniform lattice. So we may and will assume
that $\Gamma$ is clean. By \fullref{lem:preservsystem}, the group 
$\Gamma$ preserves a system of local reflections $\sigma$. By
\fullref{rem:existdecomp} the system $\sigma$ admits a
$\Gamma$--invariant decomposition $\varphi$.

We choose $e$--walls $M_1,\dots,M_n$ such that any $e$--wall of $A$ is a
$\Gamma$--translate of a unique $e$--wall of the family $\{M_1,\dots,M_n\}$.

We first set $\Gamma^0=\Gamma,\varphi^0=\varphi$ and $\sigma^0=\sigma$. We apply
\fullref{thm:killholwall} to the $e$--wall $M_1$, using
\fullref{rem:engulf}. Thus we find a finite index normal subgroup
$\Gamma^1\subset\Gamma$, a system of local reflections $\sigma^1$ and
a rank--2 field $\varphi^1$ such that $\sigma^1$ and $\varphi^1$ are
$\Gamma^1$--invariant, $\varphi^1$ is a decomposition of the
holonomy of $\sigma^1$ and for every polygon $\pi$ either $\pi$ is
separated by no $\Gamma$--translate and $\varphi^1=\varphi^0$ on
${\mathcal T}_{\pi}$ or  $\pi$ is separated by some $\Gamma$--translate
$\gamma M$ and then
$\varphi^1(\tau(\pi,M))=1,\varphi^1(\tau(\pi,M)')=\varphi^0(\tau(\pi,M
)')$.

We apply again \fullref{thm:killholwall} to the subgroup
$\Gamma^1$, the system of local reflections $\sigma^1$, its
decomposition $\varphi^1$  and the $e$--wall $M_2$, thus  getting a finite
index normal subgroup $\Gamma^2\subset \Gamma$ with $\Gamma^2\subset
\Gamma^1$, together with a system $\sigma^2$ and a decomposition
$\varphi^2$ of the holonomy of $\sigma^2$ which is trivial along
$\Gamma$--translates of $M_2$. If we repeat this procedure until we
reach the $\Gamma$--translates of $M_n$, we obtain sequences
$\Gamma^1\supset\cdots\supset\Gamma^n$ (each normal of finite index in
$\Gamma$),
$\sigma^1, \dots,\sigma^n$ systems of local reflections, $\varphi^1,
\dots,\varphi^n$  with $\sigma^i,\varphi^i$ invariant under
$\Gamma^i$, $\varphi^i$ decomposition of the holonomy of $\sigma^i$.

 We claim that $\varphi^n=1$, so that
$\sigma^n$ has no holonomy.

 Indeed consider a polygon $\pi$ with ${2m}$
edges. The $e$--walls separating $\pi$ are in the
$\Gamma$--orbit of
 $M_{i_1},\dots,M_{i_{2m}}$ with
$i_1<\cdots<i_{2m}$. We denote by ${N}_j$ the
$\Gamma$--translate of $M_{i_j}$ which separates
$\pi$.

   For $i\not\in\{i_1,\dots,i_{2m}\}$ we know that
$\varphi^i=\varphi^{i-1}$ on  ${\mathcal T}_{\pi}$.

   To simplify notation set $\tau=\tau(\pi,N_1)$. Then we know that
$\varphi^{i_1}(\tau')=\varphi^{i_1-1}(\tau')$ and
$\varphi^{i_1}(\tau)=1$. Half of the $e$--walls $N$ that separate $\pi$
satisfy $\tau(\pi,N)=\tau$, and the other half satisfy
$\tau(\pi,N)=\tau'$. So there is a least positive integer $\ell\ (\le
{m}+1)$ such that $\tau(\pi,N_{\ell})=\tau'$. For every integer
$1\le j<\ell$ we have $\tau(\pi,N_j)=\tau$ and in fact
$\varphi^{i_j}=\varphi^{i_1}$ on  ${\mathcal T}_{\pi}$.

The new decomposition $\varphi^{i_{\ell}}$ is trivial on $\tau'$, and
takes the same value as $\varphi^{i_1}$ on $\tau$. This means that
$\varphi^\ell=1$ on  ${\mathcal T}_{\pi}$.

For $j>\ell$ the decompositions $\varphi^j$ keep the value of  the
previous $\varphi^{j-1}$  on one (or both) of the triangles of
${\mathcal T}_{\pi}$ and puts the value 1 on the other. So
$\varphi^n=\varphi^\ell=1$ on  ${\mathcal T}_{\pi}$.
\end{proof}

\begin{defn}
Let $X$ denote some polygonal complex.
 Assume $X$ is equipped with a piecewise
Euclidean nonpositively curved length metric $d$. A
subgroup $\Lambda$ of ${\rm Aut} (X)$
is \textit{$d$--convex\/} whenever there is a convex
subcomplex $C\subset X'$ of the first barycentric subdivision such that
$C$ is invariant under $\Lambda$ and $\Lambda$ is
discrete cocompact on $C$.

 For example if $X$ is the polygonal complex of a two-dimensional Coxeter
system $(W,S)$, and if $X$ is endowed with the
($\mathrm{C^4}$)-- or the $(\mathrm{C^2})$--metric, we will
consider ($\mathrm{C^4}$)--convex subgroups or
$(\mathrm{C^2})$--convex subgroups.
\end{defn}

\begin{exmp}\label{exmp:exconvex}
Assume that a polygonal complex $X$ satisfies  ($\mathrm{C^4}$).
Then by \fullref{lem:ewalltree}, any
geometric $e$--wall is a convex subcomplex of
$A$, endowed with the ($\mathrm{C^4}$)--metric. The stabilizer of an $e$--wall $M$ in
a uniform lattice $\Gamma$ of $X$ has finitely many orbits in $M$. Indeed
there are finitely many orbits of oriented edges of $M$ under $\Gamma$,
and if an element $\gamma\in\Gamma$ sends
$\overrightarrow e\in M$ inside $M$, then in fact $\gamma M=M$.

It follows that the stabilizer of an $e$--wall is cocompact on the
associated geometric $e$--wall, hence is
($\mathrm{C^4}$)--convex, and so is any finite index subgroup of the $e$--wall
stabilizer.

Similarly if $X$ satisfies  $(\mathrm{C^2})$ then any finite index
subgroup of the stabilizer of a wall in a uniform lattice of $X$ is
a $(\mathrm{C^2})$--convex subgroup.

Note that the trivial subgroup  $\{1\}$ is
$d$--convex for any metric $d$.
\end{exmp}

\begin{thm}\label{thm:commseparcox}
Assume $(W,S)$ is two-dimensional
and $I$ does
not contain any subset $\{i,j,\ell\}$ with $m_{ij}\le 3,m_{j\ell}=3,m_{\ell
i}<+\infty$, so that $X$ satisfies  ($\mathrm{C^4}$).

Let $\Gamma$ denote a   uniform lattice of
${\rm Aut}_{\rm rk}(A)$ in which each
($\mathrm{C^4}$)--convex subgroup is  separable.  Then
$\Gamma$ is commensurable in    ${\rm
Aut}_{\rm rk}(A)$ with $W$.
\end{thm}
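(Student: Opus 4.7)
The plan is to chain together the three results built up in the preceding subsections. First I would observe, via \fullref{exmp:exconvex}, that under the $(\mathrm{C^4})$ hypothesis each geometric $e$--wall is a convex subcomplex of $A$ equipped with the $(\mathrm{C^4})$--metric, and any finite index subgroup of the stabilizer of an $e$--wall in the uniform lattice $\Gamma$ acts cocompactly on the associated geometric $e$--wall. Hence every such finite index subgroup of an $e$--wall stabilizer is a $(\mathrm{C^4})$--convex subgroup of $\Gamma$. The separability hypothesis on $(\mathrm{C^4})$--convex subgroups then yields in particular that finite index subgroups of $e$--wall stabilizers are separable in $\Gamma$.

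Next I would verify residual finiteness: the trivial subgroup $\{1\}$ is $(\mathrm{C^4})$--convex (as noted at the end of \fullref{exmp:exconvex}), so by assumption it is separable in $\Gamma$, which is exactly the statement that $\Gamma$ is residually finite. With both hypotheses of \fullref{thm:killehol} now verified (the combinatorial hypothesis on $(W,S)$ and $I$ is part of our assumption, giving the $(\mathrm{C^4})$ condition on $X$ via \fullref{lem:condnpc}), applying that theorem produces a finite index subgroup $\Gamma'\subset\Gamma$ that preserves a system of local reflections $\sigma$ with trivial holonomy.

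Finally, \fullref{thm:commholcox} characterises commensurability with $W$ in ${\rm Aut}(X)={\rm Aut}_{\rm rk}(A)$ precisely by the existence of a finite index subgroup preserving a system of local reflections without holonomy. Since $\Gamma'$ is such a finite index subgroup of $\Gamma$, we conclude that $\Gamma$ is commensurable with $W$ in ${\rm Aut}_{\rm rk}(A)$. No step here poses a genuine obstacle, since all the technical work (killing holonomy along one $e$--wall, iterating over a finite set of orbit representatives, and conjugating systems of local reflections without holonomy) has already been carried out in \fullref{thm:killholwall}, \fullref{thm:killehol} and \fullref{cor:conjugsystem}; the only task of the proof is to recognise the separability assumption on $(\mathrm{C^4})$--convex subgroups as subsuming both the residual finiteness and the $e$--wall separability hypotheses required to invoke those results.
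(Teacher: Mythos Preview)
Your proposal is correct and follows essentially the same route as the paper: deduce residual finiteness and separability of finite index subgroups of $e$--wall stabilizers from \fullref{exmp:exconvex}, apply \fullref{thm:killehol} to obtain a finite index subgroup preserving a holonomy-free system of local reflections, then conclude commensurability. The only cosmetic difference is in the last step: the paper invokes \fullref{cor:conjugsystem} directly and spells out the finite index computation, whereas you cite \fullref{thm:commholcox}; note that \fullref{thm:commholcox} is stated in the introduction only for the constant-$m$ complexes $X(m,L)$, so in the general two-dimensional setting it is cleaner to appeal to \fullref{cor:conjugsystem} as the paper does (the argument is identical).
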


\begin{proof}
 The condition implies that $\Gamma$ is residually finite and that the
finite index subgroups of $e$--wall stabilizers are separable (see
\fullref{exmp:exconvex}). By \fullref{thm:killehol}, the group
$\Gamma$ has a finite index subgroup $\Gamma'$ preserving a system of
local reflections $\sigma$ without holonomy. Now by 
\fullref{cor:conjugsystem}, there is an automorphism $\varphi$ of
${\rm Aut}_{\rm rk}(A)$  sending $\sigma$ onto
$\sigma^W$. Also, conjugation by $\varphi$ identifies ${\rm Aut}_{\rm
rk}(A,\sigma)$  with ${\rm Aut}_{\rm rk}(A,\sigma^W)$, an extension of $W$
of finite index. Since
$\Gamma'$ is a uniform lattice, it has  finite index in the uniform
lattice
${\rm Aut}_{\rm rk}(A,\sigma)$. Thus conjugation by $\varphi$ maps
$\Gamma'$ onto a group $\Gamma''$ such that $\Gamma''\cap W$ is of finite
index in both $\Gamma''$ and $W$. The commensurability follows.
\end{proof}

\begin{proof}[Proof of \fullref{thm:commseparbis}]
The negative curvature condition insures that $(W,S)$ is two-dimensional
and  that $I$ contains no subset $\{i,j,\ell\}$ with $m_{ij}\le 3,m_{j\ell}=3, m_{\ell
i}<+\infty$, so that $X$ satisfies  ($\mathrm{C^4}$) (see \fullref{exmp:mconstant}).

Clearly ($\mathrm{C^4}$)--convex subgroups are quasiconvex subgroups. So if
all quasiconvex subgroups are separable 
\fullref{thm:commseparcox} applies.
\end{proof}

\bibliographystyle{gtart}
\bibliography{link}

\end{document}